\definecolor{bgcolor}{rgb}{0.8,1,1}
\definecolor{bgcolor2}{rgb}{0.8,1,0.8}
\definecolor{niceblue}{rgb}{0.0,0.19,0.56}
\newtheorem{assumption}{Assumption}
\definecolor{PineGreen}{RGB}{0,110,51}
\definecolor{BrickRed}{RGB}{143,20,2}
\def\nikita#1{{\color{black}#1}}
\newcommand{\R}{\mathbb{R}}
\def\<#1,#2>{\left\langle #1,#2\right\rangle}
\newtheorem{lemma}{Lemma}[section]
\newtheorem{theorem}{Theorem}[section]
\newtheorem{proposition}{Proposition}[section]
\newtheorem{corollary}{Corollary}[section]
\newtheorem{remark}{Remark}[section]
\theoremstyle{plain}
\newcommand{\Tr}{\mathrm{Tr}}
\newlength{\dhatheight}
\begin{document}

\title{Intermediate Gradient Methods with Relative Inexactness}


\author{Nikita Kornilov \\
            Moscow Institute of Physics and Technology \\
Skolkovo Institute of Science and Technology \\
             \texttt{kornilov.nm@phystech.edu} \\  
         \And
           Eduard Gorbunov \\
            MBZUAI \\
              \texttt{eduard.gorbunov@mbzuai.ac.ae}
              \And
             Mohammad Alkousa \\
           Moscow Institute of Physics and Technology \\
           \texttt{mohammad.alkousa@phystech.edu}
              \And
              Fedor Stonyakin  \\
Moscow Institute of Physics and Technology \\
V.~Vernadsky Crimean Federal University\\
\texttt{fedyor@mail.ru}\\
         \And
              Pavel Dvurechensky  \\
             Weierstrass Institute for Applied Analysis and Stochastics\\
            \texttt{pavel.dvurechensky@wias-berlin.de} \\
             \And
          Alexander Gasnikov \\
             Moscow Institute of Physics and Technology \\
Skolkovo Institute of Science and Technology \\
Institute for Information Transmission Problems RAS \\
             \texttt{gasnikov@yandex.ru}
}


\maketitle

\begin{abstract}
This paper is devoted to first-order algorithms for smooth convex optimization with inexact gradients. Unlike the majority of the literature on this topic, we consider the setting of relative rather than absolute inexactness. More precisely, we assume that an additive error in the gradient is proportional to the gradient norm, rather than being globally bounded by some small quantity. We propose a novel analysis of the accelerated gradient method under relative inexactness and strong convexity and improve the bound on the maximum admissible error that preserves the linear convergence of the algorithm. In other words, we analyze how robust is the accelerated gradient method to the relative inexactness of the gradient information. Moreover, based on the Performance Estimation Problem (PEP) technique, we show that the obtained result is optimal for the family of accelerated algorithms we consider. Motivated by the existing intermediate methods with absolute error, i.e., the methods with convergence rates that interpolate between slower but more robust non-accelerated algorithms and faster, but less robust accelerated algorithms, we propose an adaptive variant of the intermediate gradient method with relative error in the gradient.

\end{abstract}
\keywords{Accelerated methods \and Intermediate method \and Inexact gradient \and Relative noise \and Performance Estimation Problem}




\section{Introduction}

Motivated by large-scale optimization problems in machine learning and inverse problems, we focus in this paper on first-order algorithms for smooth convex optimization. In many situations, such algorithms cannot use exact first-order information, i.e., gradients, since it is not available. The standard example of such a situation is stochastic optimization problems \cite{shapiro2021lectures} when only a noisy stochastic approximation of the gradient is available. Another example, which we are focusing on in this paper, is when in a deterministic problem some deterministic error is present in the gradient and function values. For example, this may happen when another problem has to be solved to evaluate the gradient or objective, and this problem cannot be solved exactly due to its complexity. A particular setting of such a situation is given by PDE-constrained optimization problems \cite{baraldi2023proximal,hintermuller2020convexity,matyukhin2021convex}, in which the evaluation of the gradient requires solving a system of direct and adjoint PDEs. Another example is bilevel optimization when a constraint in the upper-level problem is given by a solution to a lower-level problem \cite{sabach2017first,solodov2007explicit}. Inexact gradients also typically arise in optimal control problems and inverse problems, where one needs to solve ODEs or PDEs to find the gradient of the objective function \cite{matyukhin2021convex}. For further details on inexactness in the gradients, we refer to \cite{devolder2013intermediate,polyak1987introduction,stonyakin2021inexact,vasin2023accelerated} and references therein. All these applications motivate the study of first-order algorithms with inexact information. We would like to emphasize the important role that Boris Polyak's book \cite{polyak1987introduction} plays in the analysis of non-accelerated inexact gradient methods (for gradient-free methods see another book co-authored by B.~Polyak  \cite{granichin2003randomizirovannye}). For accelerated methods, the first tight analysis of the convergence of the conjugate gradient method with inexact information was published in \cite{nemirovski1986regularizing}. For the general Nesterov accelerated gradient method the first tight analysis and new theoretically very useful concept of (adversarial) inexactness in the gradient was proposed in \cite{devolder2014first} (see also \cite{d2008smooth}).  

A well-developed branch of research on gradient methods with adversarial errors in the gradient is the study of algorithms with bounded additive error \cite{cohen2018acceleration,d2008smooth,gorbunov2019optimal,khanh2023inexact,polyak1987introduction}. The main result here is as follows: if we consider smooth convex optimization problem on a compact set \cite{d2008smooth,cohen2018acceleration} or use a proper stopping rule criteria \cite{gorbunov2019optimal,vasin2023accelerated}, then non-accelerated and accelerated methods do not accumulate an additive error and we can reach the objective function residual proportional to the level of this error. We mention here also the undeservedly little-known old result of Boris Polyak, that without a proper stopping rule on a whole space, even simple gradient descent can diverge \cite{poljak1981iterative}. 

A less-studied setting is when additive error is proportional to the norm of the gradient, which we refer to as relative error (or relative noise) from B.~Polyak's book \cite{polyak1987introduction}:
 $$\|\widetilde{\nabla} f(x) - \nabla f(x)\|_2 \leq \hat{\varepsilon} \|\nabla f(x)\|_2, \quad \forall \hat{\varepsilon} \in [0, 1].$$
Despite the result for the gradient method being classical \cite{polyak1987introduction}, the analysis of accelerated gradient methods is quite challenging in this setting. The recent works \cite{gannot2022frequency,vasin2023accelerated} 
analyze accelerated gradient methods with relative error in the strongly convex case. 
Recently, the analysis of gradient-type methods with relative error was obtained as a by-product of the developments in stochastic optimization with decision-dependent distribution \cite{drusvyatskiy2023stochastic} and policy evaluation in reinforcement learning via reduction to stochastic variational inequality with Markovian noise \cite{kotsalis2022simple}.

The best-known result in the relative error setting \cite{gannot2022frequency,vasin2023accelerated} assumes that $\hat{\varepsilon} \lesssim \left(\mu/L\right)^{3/4}$ for the method to preserve the accelerated convergence rate. Here $L$ is the Lipschitz constant of the gradient and $\mu$ is the strong convexity parameter. In this paper, we improve this bound to $ \hat{\varepsilon} \lesssim \left(\mu/L\right)^{1/2}$, showing that accelerated gradient methods are more robust to relative error than it was known in the literature. Further, we propose two new families of intermediate methods parameterized by $p\in [1,2]$ and interpolating between non-accelerated and accelerated methods. The first family is based on the Similar Triangles Method from \cite{dvurechensky2018computational,d2021acceleration,gasnikov2018universal,gorbunov2019optimal} and the second one is based on \cite{devolder2013intermediate,dvurechensky2016stochastic,kamzolov2021universal}. The first family allows us to obtain the bound $\hat{\varepsilon} \lesssim \left(\mu/L\right)^{1/2}$ for $\mu$-strongly convex unconstrained problems and $ \hat{\varepsilon} \lesssim 1/N$, where $N$ is a number of required iterations, for convex unconstrained problems. Moreover, by using the PEP technique \cite{goujaud2022pepit,taylor2017smooth} we show that this result is the best possible for the considered family of methods. An interesting phenomenon we have observed with the first family is as follows: interpolation w.r.t. the acceleration level $p$ does not make sense for the robustness of algorithms in the family. All of the methods are equally robust to the level of noise. For the second family, we observed quite a different picture. Namely, we propose a proper adaptive way of choosing parameters $p$ and $L$ along the iteration process, which leads to better robustness by slowing down the convergence rate.



\bigskip
\noindent
\textbf{Paper organization}

The paper consists of an introduction and four main sections. The first of those four sections gives notation and necessary definitions. In Sect. \ref{sec:ISTM}  we
consider theoretical results for the Intermediate Similar Triangle Method for both convex and strongly convex optimization with relative noise in the gradient. Sect. \ref{sec:AIM} is devoted to adaptive algorithms for the constrained optimization problem. In particular, we investigate adaptivity w.r.t. both Lipschitz smoothness and intermediate parameter $p$.  In Sect. \ref{sec:exps} we present some numerical experiments that validate our theory and demonstrate the effectiveness of the proposed algorithms for the considered optimization problem. Finally, in Appendix (Sect. \ref{sec:appendix}) we provide missing proofs and additional experiments for our methods.

\section{Preliminaries}

We use $\langle x,y \rangle := \sum\limits_{k=1}^n x_ky_k$ to denote the inner product of $x,y \in \R^n$ and,   $\|x\|_2 := \left(\sum\limits_{k=1}^n x_k^2\right)^{1/2}$ to denote $\ell_2$-norm of $x \in \R^n$.

We will impose the following assumptions on the class of optimized functions $f$. 
\begin{assumption}[Convexity]\label{as:cvx}
    The objective function $f$ is convex, i.e.
    \begin{equation*}
    f(y) \geq f(x)+\langle\nabla f(x), y-x\rangle, \quad \forall x, y \in \mathbb{R}^n.
    \end{equation*}
\end{assumption}

\begin{assumption}[$L$-smootheness]\label{as:Lsmooth}
    There exist a constant $L > 0$ such that $f$ is $L$-smooth function, i.e.
\begin{equation}\label{smoothness_cond}
    f(y) \leq f(x)+ \left\langle\nabla f(x), y-x\right\rangle + \frac{L}{2} \|y-x\|_2^2, \quad \forall x, y \in \mathbb{R}^n,
\end{equation}
or equivalently 
\begin{equation}\label{eq_6}
    \|\nabla f(y) - \nabla f(x)\|_2 \leq L \|y - x\|_2, \quad \forall x, y \in \mathbb{R}^n.
\end{equation}
\end{assumption}

The convexity (Assumption  \ref{as:cvx}) and $L$-smoothness (Assumption \ref{as:Lsmooth}) can be combined with the next inequality \eqref{eq_8}.
\begin{proposition}
Let $f$ be a convex (Assumption \ref{as:cvx}) and $L$-smooth (Assumption \ref{as:Lsmooth}) function, then 
\begin{equation}\label{eq_8}
\left\|\nabla f(x)-\nabla f(y)\right\|_2^2 \leq 2 L \left(f(x)-f(y)-\langle\nabla f(y), x-y\rangle \right), \quad \forall x, y \in \mathbb{R}^n.
\end{equation}
\end{proposition}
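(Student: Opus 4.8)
The plan is to reduce the claim to the well-known fact that an $L$-smooth convex function cannot lie too far above its minimal value, with the gap controlled by its gradient norm. The key device is to introduce, for a fixed $y$, the auxiliary function $\phi(z) := f(z) - \langle \nabla f(y), z\rangle$. Since $f$ is convex and $L$-smooth (Assumptions \ref{as:cvx} and \ref{as:Lsmooth}) and we only subtract a linear term, $\phi$ is again convex and $L$-smooth. Moreover $\nabla \phi(z) = \nabla f(z) - \nabla f(y)$, so $\nabla \phi(y) = 0$, and convexity guarantees that $y$ is a global minimizer of $\phi$.

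Next I would establish the auxiliary inequality that any $L$-smooth function $g$ with a global minimizer $z^\star$ satisfies $g(z) - g(z^\star) \geq \frac{1}{2L}\|\nabla g(z)\|_2^2$ for all $z$. To prove it, I apply the smoothness upper bound \eqref{smoothness_cond} at the point $z$, evaluated at the gradient-step point $z - \frac{1}{L}\nabla g(z)$. Since $z^\star$ minimizes $g$, the quantity $g(z^\star)$ is bounded above by the value of $g$ at this point, and the quadratic in the step size collapses to $g(z) - \frac{1}{2L}\|\nabla g(z)\|_2^2$, which yields the claim after rearranging.

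Applying this auxiliary inequality to $g = \phi$ with $z = x$ and $z^\star = y$ gives $\phi(x) - \phi(y) \geq \frac{1}{2L}\|\nabla f(x) - \nabla f(y)\|_2^2$. It then remains to unfold the definition of $\phi$: the left-hand side equals $f(x) - f(y) - \langle \nabla f(y), x - y\rangle$, and multiplying through by $2L$ produces exactly \eqref{eq_8}.

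I would note that there is essentially no serious obstacle here: the only mildly nontrivial step is recognizing that the apparently asymmetric right-hand side of \eqref{eq_8} is precisely the (Bregman-divergence-type) gap of the shifted function $\phi$, whose minimizer is $y$. Once this auxiliary function is in place, both the use of convexity (to locate the minimizer at $y$) and of the descent lemma (to bound the gap by the gradient norm) follow directly from the two standing assumptions.
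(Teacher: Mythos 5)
Your proof is correct. The paper itself states this proposition without any proof, treating it as a classical fact (it is the standard interpolation/cocoercivity inequality, e.g.\ Theorem 2.1.5 in Nesterov's textbook), so there is no in-paper argument to compare against; your derivation is exactly the standard one. Every step checks out: the shifted function $\phi(z) = f(z) - \langle \nabla f(y), z\rangle$ inherits convexity and $L$-smoothness since only a linear term is subtracted, convexity together with $\nabla\phi(y)=0$ makes $y$ a global minimizer, the descent bound $\phi(x) - \phi(y) \geq \frac{1}{2L}\|\nabla\phi(x)\|_2^2$ follows from evaluating \eqref{smoothness_cond} at the gradient step $x - \frac{1}{L}\nabla\phi(x)$, and unfolding $\phi(x)-\phi(y) = f(x)-f(y)-\langle\nabla f(y), x-y\rangle$ yields \eqref{eq_8} after multiplying by $2L$.
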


We also consider a narrower class of strongly convex functions for the optimization problem \eqref{main_uncons_problem}.
 \begin{assumption}[Strong convexity]\label{as:str_cvx}
    There exists a constant $\mu > 0$, such that $f$ is $\mu$-strongly convex, i.e.
    \begin{equation}\label{eq:str_cvx}
        f(y) \geq f(x) + \langle \nabla f(x), y - x \rangle + \frac{\mu}{2}\|y - x\|_2^2, \quad x, y \in \R^n. 
    \end{equation}
\end{assumption}

Finally, we make an assumption about the noise in the accessible gradient of the objective function $f$.
\begin{assumption}[Relative noise]\label{as:noise}
We assume that we have access to a gradient of $f$ with relative noise, i.e.
\begin{equation}\label{eq_relative_error}
    \|\widetilde{\nabla} f(x) - \nabla f(x)\|_2 \leq \hat{\varepsilon} \|\nabla f(x)\|_2, \quad \forall \hat{\varepsilon} \in [0, 1].
\end{equation}
\end{assumption}

\section{Intermediate Similar Triangle Method with Relative Noise in Gradient}\label{sec:ISTM}
In this section, we consider the following unconstrained optimization problem 
\begin{equation}\label{main_uncons_problem}
    \min_{x \in \mathbb{R}^n} f(x).
\end{equation}

\subsection{Convex Case}
For the problem \eqref{main_uncons_problem}, in the considered setting above, we propose an algorithm, called Intermediate Similar Triangle Method ({\texttt{ISTM}}, see Algorithm \ref{alg_STM_relative}). This algorithm is a development of the ideas of the original Similar Triangle Method \cite{gasnikov2016universal} and intermediate acceleration \cite{devolder2013intermediate,dvurechensky2016stochastic,gorbunov2021near} with inexact oracle.

\begin{algorithm}[!ht]
\caption{ Intermediate  Similar Triangle Method (\texttt{ISTM}).}\label{alg_STM_relative}
\begin{algorithmic}[1]
   \REQUIRE Initial point $x^0$, number of iterations $N$, smoothness constant  $L>0$, and step size parameter $a \geq 1$, intermediate parameter $p \in [1,2]$. 
   \STATE Set $A_0 = \alpha_0 = 0, y^0 = z^0 = x^0$.
   \FOR{$k=0,1 ,  \ldots, N-1$}
   \STATE Set $\alpha_{k+1} = \frac{(k+2)^{p-1}}{2aL}, \, A_{k+1} = \alpha_{k+1} + A_k$. \label{item_3}
   \STATE $x^{k+1} = \frac{1}{A_{k+1}} \left(A_k y^k + \alpha_{k+1} z^k\right) $. \label{item_4}
   \STATE $z^{k+1} = z^k - \alpha_{k+1} \widetilde{\nabla} f(x^{k+1})$.
   \STATE $y^{k+1} = \frac{1}{A_{k+1}} \left(A_k y^k + \alpha_{k+1} z^{k+1}\right)$.
   \ENDFOR
    \ENSURE 
	$y^N$.
\end{algorithmic}
\end{algorithm}

The main results about the convergence of  \texttt{ISTM}  are presented in the following theorem.

\begin{theorem}\label{theo:coveregence_alg1}
Let function $f$ be convex (Assumption \ref{as:cvx}) and $L$-smooth (Assumption \ref{as:Lsmooth}) with relative noise $\hat{\varepsilon} \in [0,1]$ (Assumption \ref{as:noise}). Then after $N\geq 1$ iterations of  \texttt{ISTM} with intermediate parameter $p \in [1,2]$ and
\begin{equation}\label{formula_for_a}
    a = O\left(\max \left\{ 1, N^{\frac{p}{4}} \sqrt{\hat{\varepsilon}}, N^{\frac{p}{2}} \hat{\varepsilon}, N^p \hat{\varepsilon}^2 
 \right\}\right),
\end{equation}
we have 
\begin{equation}\label{conv_rate_alg1}
    f(y^N) - f(x^*) \leq \frac{8 a L C_1^2 R_0^2}{(N+1)^p},
\end{equation}
 where $R_0 = \|x^0 - x^*\|_2$ and 
\begin{equation}\label{eq_29}
     C_1 =  \sqrt{2}.
\end{equation}
i.e. with $a$ as in \eqref{formula_for_a}, we get 
\begin{equation}\label{eq:conv_rate_alg1_proper_a}
f(y^N) - f(x^*) \leq O\left(\max \left\{ \frac{LR_0^2}{N^p}, \frac{\sqrt{\hat{\varepsilon}} L R_0^2}{N^\frac{3p}{4}}, \frac{\hat{\varepsilon} LR_0^2}{N^\frac{p}{2}}, \hat{\varepsilon}^2 L R_0^2 \right\}\right).
\end{equation}
\end{theorem}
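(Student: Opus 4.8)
The plan is to run a potential-function (Lyapunov) argument built on the quantity
\begin{equation*}
\Psi_k \eqdef A_k\big(f(y^k)-f(x^*)\big)+\tfrac12\|z^k-x^*\|_2^2 ,
\end{equation*}
whose initial value is $\Psi_0=\tfrac12 R_0^2$ because $A_0=0$ and $z^0=x^0$. The target bound \eqref{conv_rate_alg1} follows once I show $\Psi_N=O(\Psi_0)$: indeed $f(y^N)-f(x^*)\le \Psi_N/A_N$, and summing the step sizes from line \ref{item_3} gives $A_N=\frac{1}{2aL}\sum_{m=2}^{N+1}m^{p-1}\ge \frac{(N+1)^p-1}{2aLp}\gtrsim \frac{(N+1)^p}{aL}$ for $p\in[1,2]$, which turns $\Psi_N=O(R_0^2)$ into the claimed $O\big(aLR_0^2/(N+1)^p\big)$ rate with an absolute constant, matching $C_1=\sqrt2$.

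First I would derive a one-step estimate for $\Psi_{k+1}-\Psi_k$. Line \ref{item_4} and the last update give the similar-triangle identity $A_{k+1}(y^{k+1}-x^{k+1})=-\alpha_{k+1}^2\,\widetilde{\nabla}f(x^{k+1})$, so applying $L$-smoothness \eqref{smoothness_cond} between $x^{k+1}$ and $y^{k+1}$, expanding $\tfrac12\|z^{k+1}-x^*\|_2^2$ through the $z$-update, and using convexity (Assumption \ref{as:cvx}) at $x^{k+1}$ together with the convex-combination identity $A_k(y^k-x^{k+1})=\alpha_{k+1}(x^{k+1}-z^k)$ to cancel all function values, I obtain, writing $\delta_k\eqdef\widetilde{\nabla}f(x^{k+1})-\nabla f(x^{k+1})$,
\begin{equation*}
\Psi_{k+1}-\Psi_k\le -\alpha_{k+1}\langle\delta_k,z^k-x^*\rangle+\alpha_{k+1}^2\langle\delta_k,\widetilde{\nabla}f(x^{k+1})\rangle+\alpha_{k+1}^2\Big(\tfrac{L\alpha_{k+1}^2}{2A_{k+1}}-\tfrac12\Big)\|\widetilde{\nabla}f(x^{k+1})\|_2^2 .
\end{equation*}
With $\alpha_{k+1},A_{k+1}$ from line \ref{item_3} one checks $L\alpha_{k+1}^2\le A_{k+1}$ for every $a\ge1$, $p\in[1,2]$, so the last term is nonpositive (and of order $-\tfrac14\alpha_{k+1}^2\|\widetilde{\nabla}f\|_2^2$ once $a$ is bounded away from $1$); in the exact case $\delta_k=0$ this already gives $\Psi_{k+1}\le\Psi_k$ and the noiseless accelerated rate.

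The heart of the matter is controlling the two noise terms using the relative-error bound \eqref{eq_relative_error}, $\|\delta_k\|_2\le\hat\varepsilon\|\nabla f(x^{k+1})\|_2$, together with the fact that $\|\nabla f\|_2$ and $\|\widetilde{\nabla}f\|_2$ are comparable up to factors $(1\pm\hat\varepsilon)$. The term $\alpha_{k+1}^2\langle\delta_k,\widetilde{\nabla}f\rangle$ is $O(\hat\varepsilon\,\alpha_{k+1}^2\|\widetilde{\nabla}f\|_2^2)$ and is absorbed by the negative quadratic once $\hat\varepsilon$ is small. The genuinely delicate term is $-\alpha_{k+1}\langle\delta_k,z^k-x^*\rangle$, where the noise multiplies the a priori unbounded displacement $z^k-x^*$. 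To close it I would first record the uniform-boundedness invariant implied by the convex-combination structure of the method: $y^k$ is a convex combination of $z^0,\dots,z^k$ and $x^{k+1}$ of $y^k,z^k$, hence $\max_j\|x^{j}-x^*\|_2,\|y^j-x^*\|_2\le \max_j\|z^j-x^*\|_2\le\sqrt{2\max_j\Psi_j}$; combined with $\|\nabla f(x^{k+1})\|_2\le L\|x^{k+1}-x^*\|_2$ from \eqref{eq_6}, this bounds every gradient by $L$ times a common radius $R=O(\sqrt{\Psi_0})$. Summing the cross term and estimating it through the step-size sums $A_N\sim N^p/(aL)$ and $\sum_k\alpha_{k+1}^2\sim N^{2p-1}/(a^2L^2)$, I would apply Young's inequality in two complementary ways — completing the square against the negative quadratic (sharp when the noise is small) and the crude radius bound (needed when $N\hat\varepsilon^2$ is large) — producing several competing contributions to $\Psi_N-\Psi_0$. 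Requiring each to be $O(\Psi_0)$ yields exactly the lower bounds collected in the maximum \eqref{formula_for_a}, and the argument is closed by induction on the invariant $\Psi_k\le 2\Psi_0$.

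I expect the main obstacle to be precisely this coupling term $-\alpha_{k+1}\langle\delta_k,z^k-x^*\rangle$ and the attendant bookkeeping. Unlike the absolute-error setting, relative error lets the perturbation scale with $\|\nabla f\|_2$, so it cannot be treated as a fixed additive budget; the negative curvature $-\tfrac12\alpha_{k+1}^2\|\widetilde{\nabla}f\|_2^2$ must be spent carefully, partly to cancel $\alpha_{k+1}^2\langle\delta_k,\widetilde{\nabla}f\rangle$ and partly to dominate the cross term, while the feedback of $\|z^k-x^*\|_2$ into $\Psi_k$ forces an inductive rather than a one-shot estimate. Obtaining the sharp four scalings in \eqref{formula_for_a}, rather than the cruder single bound $a\gtrsim\hat\varepsilon N^p$, hinges on tuning the Young parameters so that the first-order-in-$\hat\varepsilon$ part is entirely absorbed by the quadratic, leaving only the second-order contribution; this is where the quantitative robustness improvement over prior analyses ultimately originates.
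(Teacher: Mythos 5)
Your skeleton coincides with the paper's own: summing your one-step potential inequality over $k$ reproduces exactly Lemma \ref{main_lemma2} (note $\langle\delta_k,\widetilde{\nabla}f(x^{k+1})\rangle=\langle\theta_{k+1},\nabla f(x^{k+1})\rangle+\|\theta_{k+1}\|_2^2$), and your invariant $\Psi_k\le2\Psi_0$ parallels the paper's induction $R_k^2\le C_1^2R_0^2$. The gap is in how you plan to control the noise terms. You bound every gradient by the \emph{uniform} estimate $\|\nabla f(x^{k+1})\|_2\le L\cdot O(R_0)$ and then try to recover sharpness via Young's inequality against the negative quadratic $-\Theta(1)\,\alpha_{k+1}^2\|\widetilde{\nabla}f(x^{k+1})\|_2^2$. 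This cannot yield \eqref{formula_for_a}. First, the absorption fails outright unless $\hat\varepsilon$ is small: the available quadratic is in $\|\widetilde{\nabla}f\|_2^2$, which in the worst case equals $(1-\hat\varepsilon)^2\|\nabla f\|_2^2$, while the terms to be absorbed are of order $\hat\varepsilon\|\nabla f\|_2\|\widetilde{\nabla}f\|_2$ and $\hat\varepsilon^2\|\nabla f\|_2^2$; their ratio to the quadratic grows like $\hat\varepsilon/(1-\hat\varepsilon)$, so for $\hat\varepsilon$ near $1$ (the theorem covers all $\hat\varepsilon\in[0,1]$, and the paper's PEP experiments use $\hat\varepsilon=0.75,\,0.95$) there is essentially nothing to absorb into. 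Second, even for small $\hat\varepsilon$ the mechanism is $a$-blind: the absorbable part of the Young split and the negative quadratic both scale as $\alpha_{k+1}^2$, so the required Young parameter is $\lambda_k\gtrsim\hat\varepsilon^2$ \emph{independently of $a$}, the potential inflates by $\prod_k(1+\lambda_k)\approx e^{N\hat\varepsilon^2}$, and the induction closes only when $N\hat\varepsilon^2\lesssim1$; raising $a$ does not help, since the coefficient of the negative quadratic never exceeds $\tfrac12$. In the complementary regime your fallback (crude radius bound) gives $\sum_k\alpha_{k+1}\hat\varepsilon\cdot LR_0\cdot R_0\sim\hat\varepsilon LR_0^2A_N\sim\hat\varepsilon R_0^2N^p/a$, i.e.\ only $a\gtrsim\hat\varepsilon N^p$ and a plateau $\hat\varepsilon LR_0^2$ — strictly weaker than the theorem's $a\gtrsim\hat\varepsilon^2N^p$ and plateau $\hat\varepsilon^2LR_0^2$.

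The missing idea — and what the paper actually does — is to use the \emph{function-value} half of the induction hypothesis, $f(y^t)-f(x^*)\le C_1^2R_0^2/(2A_t)$, which your own invariant $\Psi_t\le2\Psi_0$ already contains but which you never exploit. Combining it with co-coercivity \eqref{eq_8} (so $\|\nabla f(y^t)\|_2\le\sqrt{2L(f(y^t)-f(x^*))}$), smoothness \eqref{eq_6}, and the similar-triangles identity $x^{t+1}-y^t=\frac{\alpha_{t+1}}{A_t}(z^t-x^{t+1})$, one obtains the \emph{decaying} gradient bound $\|\nabla f(x^{t+1})\|_2\lesssim\sqrt{a}\,LC_1R_0/(t+1)^{p/2}$ (the paper's \eqref{upper_general_grad}). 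This decay is what makes the noise sums small with the correct $a$-dependence and with no smallness restriction on $\hat\varepsilon$ and no negative quadratic at all: the cross term becomes $\sum_l\alpha_{l+1}\hat\varepsilon\|\nabla f(x^{l+1})\|_2\,C_1R_0\lesssim\hat\varepsilon R_0^2N^{p/2}/\sqrt{a}$, and demanding it be $O(R_0^2)$ forces exactly $a\gtrsim\max\{\hat\varepsilon N^{p/2},\hat\varepsilon^2N^p\}$, while the two quadratic sums produce the remaining entries of \eqref{formula_for_a}. Your proposal can be repaired, but only by replacing its central absorption argument with this induction-fed gradient-decay estimate.
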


\noindent
\textbf{Sketch of the proof.}
From Lemma \ref{main_lemma2}, for all $N\geq 0$, we have 
\begin{equation}\label{eq_62_intro}
\begin{aligned}
A_N(f(y^N)-f(x^*)) \leq & \frac{1}{2}\left\|z^0-x^*\right\|_2^2-\frac{1}{2}\left\|z^N-x^*\right\|_2^2
\\& +\sum_{k=0}^{N-1} \alpha_{k+1}\left\langle\theta_{k+1}, x^*-z^k\right\rangle +\sum_{k=0}^{N-1} \alpha_{k+1}^2 \left\|\theta_{k+1} \right\|_2^2
\\&
 + \sum_{k=0}^{N-1} \alpha_{k+1}^2\left\langle\theta_{k+1}, \nabla f(x^{k+1})\right\rangle.
\end{aligned}
\end{equation}
Denoting $R_k = \|z^k - x^*\|_2$, we prove by induction that with a proper choice of the parameter $a$, we manage to get inequalities
\begin{equation}\label{eq_65_intro}
\begin{aligned}
 R_k^2 &\leq  R_0^2+2 \sum_{l=0}^{k-1} \alpha_{l+1}\left\langle\theta_{l+1}, x^*-z^l\right\rangle+2 \sum_{l=0}^{k-1} \alpha_{l+1}^2\left\langle\theta_{l+1}, \nabla f\left(x^{l+1}\right)\right\rangle \\& \;\;\;\; +2 \sum_{l=0}^{k-1} \alpha_{k+1}^2\left\|\theta_{l+1}\right\|_2^2 \leq C_1^2 R_0^2,
\end{aligned}
\end{equation}
hold for $k = 1, \ldots, N$ simultaneously, where $C_1 = \sqrt{2}$. 

By the induction, we bound every term of the sum in \eqref{eq_65_intro} using convexity, $L$-smoothness and relative noise (see Assumptions \ref{as:cvx}, \ref{as:Lsmooth}, and \ref{as:noise}). We also choose the parameter $a$ to be large enough in order to lower $\alpha_{l+1} = \frac{(l+2)^{p-1}}{2aL}$, and guarantee the bound. Namely, the final formula for the parameter $a$ looks like  
\begin{equation}\label{eq:a_proper_intro}
a = O\left(\max \left\{ 1, N^{\frac{p}{4}} \sqrt{\hat{\varepsilon}}, N^{\frac{p}{2}} \hat{\varepsilon}, N^p \hat{\varepsilon}^2 
\right\}\right),
\end{equation}
where each term in maximum corresponds to the term in the sum in \eqref{eq_65_intro}.

Therefore, putting bound \eqref{eq_65_intro} in \eqref{eq_62_intro} we obtain  
\begin{equation}\label{eq_66_intro}
     f(y^k) - f(x^*) \leq \frac{C_1^2 R_0^2}{2 A_k}, \quad k = 1, \ldots, N.
\end{equation}
Finally, we apply the inequality for coefficient $A_{k}\geq \frac{(k+1)^p}{4aL}$ from  Lemma \ref{main_lemma1}, and the formula for $a$ from \eqref{eq:a_proper_intro} to complete the proof. $\hfill\square$

\bigskip

\noindent
\textbf{{\tt{ISTM}} discussion}

In setup when relative noise $\hat{\varepsilon}$ is fixed, we are free to choose the number of iterations $N$ of \texttt{ISTM}. According to Theorem \ref{theo:coveregence_alg1}, the convergence rate is mostly affected by the proper choice of parameter $a$, i.e. 
\begin{eqnarray}
    a &=& O\left(\max \left\{ 1, N^{\frac{p}{4}} \sqrt{\hat{\varepsilon}}, N^{\frac{p}{2}} \hat{\varepsilon}, N^p \hat{\varepsilon}^2 
     \right\}\right), \label{eq: a_for_disccusion} \\
    f(y^N) - f(x^*) &\leq& O \left(\max \left\{ \frac{LR_0^2}{N^p}, \frac{\sqrt{\hat{\varepsilon}} L R_0^2}{N^{\frac{3p}{4}}}, \frac{\hat{\varepsilon} LR_0^2}{N^{\frac{p}{2}}}, \hat{\varepsilon}^2 L R_0^2
 \right\}\right).
\end{eqnarray}

As the number of iterations  $N$ increases, the proper parameter $a$ also increases in a step-wise manner and the convergence rate slows down. In other words, as long as the condition $N^p \hat{\varepsilon}^2 \leq 1$ is met, accuracy is increasing with every step of the \texttt{ISTM} with $\sim \frac{1}{N^p}$ rate.  However, as soon as this condition is violated, the accuracy reaches a plateau  $\hat{\varepsilon}^2 LR_0^2$. Moreover, the plateau value does not depend on the intermediate parameter $p$. In the Intermediate Gradient Method  \cite{dvurechensky2016stochastic} with absolute noise setup instead of relative one, the smaller intermediate parameter $p$, the more robust algorithm is to noise and the lower the accuracy that can be achieved. In the \texttt{ISTM} the same effect in theory is not observed. Therefore, the optimal choice in terms of the minimal number of necessary iterations $N$ or the fastest convergence rate is $p = 2$.  

\subsection{Strongly convex case}
For the strongly convex functions we use a restart technique to obtain a more robust towards relative noise algorithm called the Restarted Intermediate  Similar Triangle Method (\texttt{RISTM},  Algorithm \ref{alg_restartSTM_relative}). 

\begin{algorithm}[!ht]
\caption{ Restarted Intermediate  Similar Triangle Method (\texttt{RISTM}).} \label{alg_restartSTM_relative}
\begin{algorithmic}[1]
   \REQUIRE Initial point $x^0$, number of restarts $K$, numbers of iterations $\{N^i\}_{i=1}^K$, smoothness constants  $\{L^i\}_{i=1}^K$, step size parameters $\{a^i\}_{i=1}^K$ , intermediate parameter $p \in [1,2]$. 
   \FOR{$i= 1, \dots, K$}
   \STATE Start \texttt{ISTM} (Algorithm \ref{alg_STM_relative}) with parameters  $(x^i, N^i, L^i, a^i, p)$ and get output point $y^i$
   \STATE Set output point as new initial one: $x^{i+1} = y^i$. 
   \ENDFOR
    \ENSURE 
	$y^K$.
\end{algorithmic}
\end{algorithm}

In this case, it is possible to find the upper bound of noise $\hat{\varepsilon}$ when the convergence rate is the same as in the setup without noise. 

\begin{theorem}\label{theo:restart_conv}
Let $f$ be an $L$-smooth (Assumption \ref{as:Lsmooth}) and $\mu$-strongly convex (Assumption \ref{as:str_cvx}) function with relative noise $\hat{\varepsilon}$ (Assumption \ref{as:noise}). Also, let $\hat{\varepsilon}$ be small enough, i.e. 
$$\hat{\varepsilon} \lesssim \sqrt{\frac{ \mu}{4 L}}.$$ 
    
Then in order to achieve an accuracy $\varepsilon$, defined as $f(x) - f(x^*) \leq \varepsilon$, \texttt{RISTM} (Algorithm \ref{alg_restartSTM_relative}) with intermediate parameter $p \in [1,2]$ requires 
\begin{eqnarray}
K &=& \left\lceil\log_2\left(\frac{\mu R_0^2}{\varepsilon} \right) + 1\right\rceil \text{ restarts, } \\
N_{\text{total}} &=& \left\lceil \left( \frac{L}{\mu}\right)^{\frac1p} \log_2\left(\frac{\mu R_0^2}{\varepsilon} \right)\right\rceil \text{ total oracle calls,} 
\end{eqnarray}
with  $ N = \left\lceil\left( \frac{L}{\mu}\right)^{\frac1p}\right\rceil$ iterations and parameter $a$ chosen according to \eqref{formula_for_a} from Theorem \ref{theo:coveregence_alg1} at each restart.
 \end{theorem}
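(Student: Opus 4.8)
The plan is to reduce the strongly convex problem to a sequence of convex subproblems and exploit the already-established convex guarantee, Theorem~\ref{theo:coveregence_alg1}, as a black box inside each restart. Let $R_i = \|x^{i+1}-x^*\|_2$ be the distance to the optimum after the $i$-th restart, with $R_0 = \|x^0-x^*\|_2$. Within restart $i$ the method runs \texttt{ISTM} from $x^i$ for $N^i$ iterations, so by the convergence rate \eqref{conv_rate_alg1} (with $C_1^2 = 2$) we have $f(x^{i+1}) - f(x^*) \leq 8 a^i L C_1^2 \|x^i - x^*\|_2^2 / (N^i+1)^p$. The first step is to combine this with strong convexity (Assumption~\ref{as:str_cvx}), which at the minimizer $x^*$ (where $\nabla f(x^*)=0$) yields $\tfrac{\mu}{2}\|x^{i+1}-x^*\|_2^2 \leq f(x^{i+1})-f(x^*)$, giving the one-step estimate
\begin{equation*}
\frac{\mu}{2}\,R_i^2 \;\leq\; \frac{8 a^i L C_1^2}{(N^i+1)^p}\,R_{i-1}^2.
\end{equation*}

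Next I would choose $N^i$ so that each restart contracts the squared distance by a factor of two, i.e. impose $(N^i+1)^p \geq 32 a^i L C_1^2 / \mu$. Provided the step-size parameter satisfies $a^i = O(1)$, this forces $N^i = O\!\big((L/\mu)^{1/p}\big)$, matching the claimed per-restart count $N = \lceil (L/\mu)^{1/p}\rceil$, and delivers $R_i^2 \leq \tfrac12 R_{i-1}^2$, hence $R_K^2 \leq 2^{-K}R_0^2$ after $K$ restarts. Feeding this back into the last restart's convergence bound (using $16 a L C_1^2/(N+1)^p \leq \mu/4$) gives $f(y^K)-f(x^*) \leq \mu R_0^2 / 2^{K+1}$, so demanding the right-hand side be at most $\varepsilon$ produces $K = O\!\big(\log_2(\mu R_0^2/\varepsilon)\big)$, which is the stated number of restarts. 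The total oracle complexity is then simply $N_{\text{total}} = K\cdot N = O\!\big((L/\mu)^{1/p}\log_2(\mu R_0^2/\varepsilon)\big)$.

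The crux of the argument — and the step I expect to be the main obstacle — is justifying that the parameter $a$ prescribed by \eqref{formula_for_a} actually stays $O(1)$ throughout, because this is precisely where the noise threshold $\hat{\varepsilon}\lesssim\sqrt{\mu/L}$ enters. Inspecting \eqref{formula_for_a}, the three noise-dependent terms $N^{p/4}\sqrt{\hat{\varepsilon}}$, $N^{p/2}\hat{\varepsilon}$, and $N^{p}\hat{\varepsilon}^2$ are each at most one exactly when $N^p\hat{\varepsilon}^2 \lesssim 1$, i.e. $\hat{\varepsilon} \lesssim N^{-p/2}$. Substituting the per-restart iteration count $N \asymp (L/\mu)^{1/p}$ turns this into $\hat{\varepsilon} \lesssim (\mu/L)^{1/2}$, which is exactly the hypothesis of the theorem. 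I would therefore present this as the key consistency check: the same $N$ that guarantees geometric contraction of $R_i^2$ is small enough that the relative noise does not inflate $a$ beyond a constant, so every restart enjoys the clean accelerated rate.

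Finally I would verify that the number of restarts is well-defined by bounding the initial residual (e.g. via $L$-smoothness, $f(x^0)-f(x^*)\leq \tfrac{L}{2}R_0^2$) so the logarithm is taken of a quantity exceeding one, and I would absorb the ceiling effects and the explicit constants ($32$, $C_1^2=2$, etc.) into the $O(\cdot)$ and the conservative ``$+1$'' appearing in the stated $K$. The only genuinely delicate point beyond routine bookkeeping is ensuring the contraction factor and the noise bound are compatible simultaneously for every $i=1,\dots,K$, which follows because $N^i$, $L^i$, and $a^i$ are held at the same scale across restarts; everything else reduces to chaining the geometric decay $R_K^2 \leq 2^{-K}R_0^2$ with the accuracy target.
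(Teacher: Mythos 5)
Your proposal is correct and follows essentially the same route as the paper: restarting \texttt{ISTM}, invoking Theorem \ref{theo:coveregence_alg1} within each restart, using strong convexity at $x^*$ to convert the function gap into the halving $R_i^2 \leq \tfrac{1}{2}R_{i-1}^2$, and chaining the geometric decay to get $K = O\bigl(\log_2(\mu R_0^2/\varepsilon)\bigr)$ restarts of $N = \lceil (L/\mu)^{1/p}\rceil$ iterations each. Your reformulation of the key check --- that $\hat{\varepsilon}\lesssim N^{-p/2}$ makes all noise terms in \eqref{formula_for_a} at most one so that $a = O(1)$ --- is the same computation the paper performs by bounding each of the four terms in \eqref{eq:conv_rate_alg1_proper_a} by $\mu R_0^2/4$, just organized through $a$ rather than through the final rate.
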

\begin{proof}
Let's take a look at first $N_1 =\left\lceil \left( \frac{L}{\mu}\right)^{\frac1p}\right\rceil$ iterations of \texttt{ISTM} with output point $x^1$. For the $\mu$-strongly  convex function $f$ and points $x^1, x^*$ due to \eqref{eq:str_cvx} the next inequality holds true
\begin{equation}\label{eq:strongly_convex_output}
   \frac{\mu}{2}\|x^1 - x^*\|_2^2 \leq f(x^1) - f(x^*). 
\end{equation} 
At the same time applying \texttt{ISTM} convergence Theorem \ref{theo:coveregence_alg1} with proper $a$ we bound right part of \eqref{eq:strongly_convex_output}  as \begin{equation}\label{eq:both_sides_eq}
\frac{\mu}{2}\|x^1 - x^*\|_2^2 \leq f(x^1) - f(x^*) \leq O \left(\max \left\{ \frac{LR_0^2}{N^p}, \frac{\sqrt{\hat{\varepsilon}} L R_0^2}{N^{3p/4}}, \frac{\hat{\varepsilon} LR_0^2}{N^{p/2}}, \hat{\varepsilon}^2 L R_0^2
 \right\}\right).
\end{equation}
Since $\hat{\varepsilon} \lesssim \sqrt{\frac{\mu}{4 L}} $, and $N_1 = \left \lceil\left( \frac{L}{\mu}\right)^{\frac1p} \right\rceil$, putting them into \eqref{eq:both_sides_eq} we get
\begin{eqnarray}\label{eq:bound_acc}
\frac{\mu}{2}\|x^1 - x^*\|_2^2 &\leq& f(x^1) - f(x^*) \notag \\
&\leq& O \left(\max \left\{ \frac{\mu R_0^2}{4}, \frac{\mu R_0^2}{4}, \frac{\mu R_0^2}{4}, \frac{\mu R_0^2}{4}
 \right\}\right) \leq \frac{\mu R_0^2}{4}.
\end{eqnarray}
Denoting $R_k = \|x^k - x^*\|_2$, we find that $R_0^2$ halves after $N_1$ iterations 
\begin{eqnarray}
     R_1^2 \leq \frac{R_0^2}{2} \label{eq:halves}.
\end{eqnarray}
On the next restart, we take $x^1$ as initial point $x^0$ and distance to solution $R_1$ as initial distance $R^0$. Thus, after $K$ restarts, we get the next bound for distance to the solution from \eqref{eq:halves}
\begin{eqnarray}
    R_K^2 \leq \frac{R_0^2}{2^K} \notag, 
\end{eqnarray}
and bound for accuracy from \eqref{eq:bound_acc}
\begin{eqnarray*}
    f(x^K) - f(x^*) \leq \frac{\mu R_{K-1}^2}{4} \leq \frac{\mu R_0^2}{4 \cdot 2^{K-1}} .
\end{eqnarray*}
Considering the total number of restarts  $K =\left \lceil \log_2\left(\frac{\mu R_0^2}{\varepsilon} \right) + 1 \right\rceil$ we, finally, have
\begin{eqnarray*}
    f(x^K) - f(x^*) \leq \frac{\varepsilon}{4}.
\end{eqnarray*}
All that remains is to notice that in each restart we have $ \left\lceil \left( \frac{L}{\mu}\right)^{\frac1p}\right\rceil$ iterations and total number of oracle calls equals $\left\lceil \left( \frac{L}{\mu}\right)^{\frac1p}\right\rceil\cdot K.$ 
\end{proof}

\bigskip

\noindent
\textbf{{\tt{RISTM}} discussion}

Theorem \ref{theo:restart_conv} states that the upper bound of relative noise that does not affect the convergence rate of \texttt{RISTM} is $\hat{\varepsilon} \lesssim \sqrt{\frac{\mu}{4 L}} $. In the best case of $p=2$, the Algorithm converges linearly with coefficient $\sqrt{\frac{L}{\mu}}$ that corresponds to optimal first-order methods, like for example Nesterov Accelerated Gradient. In the previous work \cite{vasin2023accelerated}, only $\hat{\varepsilon} \lesssim \left(\frac{\mu}{L}\right)^\frac34$ upper bound was obtained and authors made a hypothesis of improvement up to $\sqrt{\frac{\mu}{ L}}$ that we proved.

\section{Adaptive Intermediate Method with Relative Noise in Gradient}\label{sec:AIM}

Let $Q \subseteq \mathbb{R}^n$ be a simple closed convex set and $f: Q \longrightarrow \mathbb{R}$ be a convex smooth function.  We consider the following convex optimization problem
\begin{equation}\label{cons_problem}
    \min_{x \in Q} f(x).
\end{equation}





For problem \eqref{cons_problem}, we consider an adaptive algorithm called Adaptive Intermediate Algorithm (\texttt{(AIM)}, see Algorithm \ref{adaptive_alg}). The adaptation in this algorithm is for the constant $L$. 

\begin{algorithm}[htp]
\caption{Adaptive Intermediate Method (\texttt{AIM}).}\label{adaptive_alg}
\begin{algorithmic}[1]
   \REQUIRE $L_s>0$ initial guess for the Lipschitz constant, intermediate parameter $p \in [1,2]$, positive sequence $\{\delta_k\}_{k \geq 0}$.  
   \STATE set $z^0 = x^0$.
   \STATE Set $i_0 = 0$.
   \STATE Compute
   \begin{equation}\label{y0_adaptive_alg}
       y_0 = \arg\min_{x \in Q} \left\{ \frac{1}{2^{i_0} L_s} \left\langle \widetilde{\nabla} f(x^0), x - x^0 \right\rangle + \frac{1}{2}\|x - x^0\|_2^2\right\}.
   \end{equation}
   \STATE \textbf{If}
   \begin{equation*}
       f(y^0) \leq f(x^0) + \left\langle \widetilde{\nabla} f(x^0), y^0 - x^0 \right\rangle + \frac{2^{i_0} L_s}{2} \left\|y^0 - x^0\right\|_2^2 + \delta_0,
   \end{equation*}
   \textbf{then} go to step 5. Otherwise $i_0 = i_0 + 1$ and go to step 3.
   \STATE Define $L_0 = 2^{i_0} L_s, \alpha_0 = B_0 = A_0 = \frac{1}{L_0}$.
   \FOR{$k=1 , 2,   \ldots$}
   \STATE Set $i_k = 0$.
   \STATE Set $L_k = 2^{i_k} L_{k-1}$ and 
   \begin{equation}\label{alpha_k_adaptive_alg}
       \alpha_{k} = \frac{1}{L_k} \left(\frac{k + 2p}{2p}\right)^{p-1},
   \end{equation}
   \begin{equation}\label{B_k_adaptive_alg}
       B_k = \alpha_k^2 L_k,
   \end{equation}
   \begin{equation}\label{xk_adaptive_alg}
   x^k = \frac{\alpha_k}{B_k} z^{k-1} + \left(1-\frac{\alpha_k}{B_k}\right) y^{k-1},
   \end{equation}
    \begin{equation}\label{zk_adaptive_alg}
   z^k = \arg\min_{x \in Q} \left\{ \frac{1}{2}\|x - x^0\|_2^2 + \sum_{i = 0}^{k} \alpha_i  \left\langle \widetilde{\nabla} f(x^i), x - x^i \right\rangle \right\},
   \end{equation}
   \begin{equation}\label{wk_adaptive_alg}
   w^k = \frac{\alpha_k}{B_k} z^{k} + \left(1-\frac{\alpha_k}{B_k}\right) y^{k-1},
   \end{equation}
   \STATE  \textbf{If}
   \begin{equation}\label{criter_out_iter}
       f(w^k) \leq f(x^k) + \left\langle \widetilde{\nabla} f(x^k), w^k - x^k \right\rangle + \frac{ L_k}{2} \|w^k - x^k\|_2^2 + \delta_k,
   \end{equation}
   \textbf{then} go to step 10. Otherwise $i_k = i_k + 1$ and go to step 8.
   \STATE
   \begin{equation}\label{Ak_adaptive_alg}
       A_k = A_{k-1} + \alpha_k,
   \end{equation}
   \begin{equation}\label{yk_adaptive_alg}
       y^k = \frac{B_k}{A_k} w^{k} + \left(1-\frac{B_k}{A_k}\right) y^{k-1}. 
   \end{equation}
   \ENDFOR
    \ENSURE 
	$y^k$.
\end{algorithmic}
\end{algorithm}


Let us mention the following lemma which gives an upper bound for $A_k f(y^k)$. The proof will be by induction on $k \geq 0$ and similar to the proof of Theorem 3.1 in  \cite{kamzolov2021universal} (For the full proof of this lemma, see Subsect. \ref{sect_missing_proofs_AIM}).

\begin{lemma}\label{lemma_upper_bound_Ak_fk}
Let $f$ be convex (Assumption \ref{as:cvx}) and smooth (Assumption \ref{as:Lsmooth}) function. By Algorithm \ref{adaptive_alg}, we have 
\begin{equation}\label{ineq_upper_bound_Ak_fk}
    A_k f(y^k) - M_k \leq \Psi_k^*, \quad k \geq 0,
\end{equation}
where 
$M_k = \sum_{i = 0}^{k} B_i \delta_i$,
and
\begin{equation}\label{psi_star}
\Psi_k^*=\min _{x \in Q}\left\{\Psi_k(x) : = \frac{1}{2}\|x - x^0\|_2^2 + \sum_{j=0}^k \alpha_j\left(f (x^j) + \left\langle \widetilde{\nabla} f(x^j), x - x^j \right\rangle\right) \right\}.
\end{equation} 
\end{lemma}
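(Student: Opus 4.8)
The plan is to prove the bound \eqref{ineq_upper_bound_Ak_fk} by induction on $k \geq 0$, following the template of the proof of Theorem~3.1 in \cite{kamzolov2021universal}. The key object is the estimate sequence $\Psi_k$ in \eqref{psi_star}, which is a $1$-strongly convex function (because of the $\frac12\|x-x^0\|_2^2$ term) whose minimizer is exactly the point $z^k$ defined in \eqref{zk_adaptive_alg}. So throughout I will use two facts repeatedly: first, that $z^k = \argmin_{x\in Q}\Psi_k(x)$, and second, the strong-convexity inequality $\Psi_k(x) \geq \Psi_k^* + \frac12\|x - z^k\|_2^2$ for all $x \in Q$, which holds by $1$-strong convexity of $\Psi_k$ together with the optimality of $z^k$.

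For the base case $k=0$, I would check directly that $A_0 f(y^0) - M_0 = A_0 f(y^0) - B_0\delta_0 \leq \Psi_0^*$. Here $y^0$ is the point from \eqref{y0_adaptive_alg} passing the line-search test in step~4, and with $A_0 = \alpha_0 = B_0 = 1/L_0$ one uses the accepted smoothness-like inequality $f(y^0) \leq f(x^0) + \langle \widetilde\nabla f(x^0), y^0-x^0\rangle + \frac{L_0}{2}\|y^0-x^0\|_2^2 + \delta_0$ to compare with $\Psi_0^* = \min_x\{\frac12\|x-x^0\|_2^2 + \alpha_0(f(x^0)+\langle\widetilde\nabla f(x^0),x-x^0\rangle)\}$. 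This is a short computation showing the quadratic model's minimum dominates $A_0 f(y^0)$.

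The inductive step is the heart of the argument. Assuming $A_{k-1} f(y^{k-1}) - M_{k-1} \leq \Psi_{k-1}^*$, I want to show $A_k f(y^k) - M_k \leq \Psi_k^*$. The plan is to expand $\Psi_k^*$ using the recursion $\Psi_k(x) = \Psi_{k-1}(x) + \alpha_k\big(f(x^k) + \langle\widetilde\nabla f(x^k), x-x^k\rangle\big)$, then apply the strong-convexity lower bound $\Psi_{k-1}(x) \geq \Psi_{k-1}^* + \frac12\|x-z^{k-1}\|_2^2$ and minimize the resulting quadratic-plus-linear expression in $x$ over $Q$; the minimizer is $z^k$. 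This produces a lower bound on $\Psi_k^*$ of the form $\Psi_{k-1}^* + \alpha_k(f(x^k) + \langle\widetilde\nabla f(x^k), z^k - x^k\rangle) + \frac12\|z^k - z^{k-1}\|_2^2$. I then use the induction hypothesis for $\Psi_{k-1}^*$, convexity of $f$ to write $A_{k-1}f(y^{k-1}) \geq A_{k-1}(f(x^k) + \langle\widetilde\nabla f(x^k), y^{k-1}-x^k\rangle)$ type bounds, and the definitions \eqref{xk_adaptive_alg}, \eqref{wk_adaptive_alg}, \eqref{yk_adaptive_alg} relating $x^k, w^k, y^k$ together with the identity $B_k = \alpha_k^2 L_k$ from \eqref{B_k_adaptive_alg} and $A_k = A_{k-1}+\alpha_k$. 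The accepted line-search criterion \eqref{criter_out_iter} supplies the smoothness step $f(w^k) \leq f(x^k) + \langle\widetilde\nabla f(x^k), w^k-x^k\rangle + \frac{L_k}{2}\|w^k-x^k\|_2^2 + \delta_k$, and combining everything the linear terms in $\widetilde\nabla f(x^k)$ telescope correctly and the quadratic terms $\frac{L_k}{2}\|w^k - x^k\|_2^2$ are absorbed by $\frac12\|z^k-z^{k-1}\|_2^2$ precisely because of the scaling $B_k = \alpha_k^2 L_k$ and the convex-combination structure of $x^k$ and $w^k$.

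I expect the main obstacle to be the bookkeeping in the inductive step: one must verify that the coefficients introduced by the convex combinations \eqref{xk_adaptive_alg}--\eqref{yk_adaptive_alg} together with $B_k=\alpha_k^2 L_k$ make the quadratic terms cancel exactly and the extra $\delta_k$ error contribute exactly $B_k\delta_k$ to $M_k$. Since the oracle here is inexact, care is needed that only the \emph{perturbed} gradient $\widetilde\nabla f(x^k)$ appears in the model $\Psi_k$ and in \eqref{criter_out_iter}, so that the algebra closes without needing any property of the true gradient beyond convexity of $f$ itself — the relative-noise Assumption~\ref{as:noise} plays no role in this particular lemma, which is a clean deterministic-model statement. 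Because the structure mirrors \cite{kamzolov2021universal} exactly, the argument is essentially a verification that the inexact model and the adaptive line search preserve the estimate-sequence inequality with the cumulative error term $M_k = \sum_{i=0}^k B_i\delta_i$.
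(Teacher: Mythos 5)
Your proposal is correct and follows essentially the same route as the paper's own proof of Lemma \ref{lemma_upper_bound_Ak_fk}: induction on $k$, the base case settled by the step-4 acceptance test with $A_0=\alpha_0=B_0=1/L_0$, and an inductive step that evaluates $\Psi_k$ at its minimizer $z^k$, invokes $1$-strong convexity of $\Psi_{k-1}$ together with optimality of $z^{k-1}$, then uses the identities $(B_k-\alpha_k)(y^{k-1}-x^k)+\alpha_k(z^k-x^k)=\alpha_k(z^k-z^{k-1})$ and $w^k-x^k=\frac{\alpha_k}{B_k}(z^k-z^{k-1})$, the scaling $B_k=\alpha_k^2 L_k$, the line-search criterion \eqref{criter_out_iter}, and convexity along \eqref{yk_adaptive_alg} to assemble $A_k f(y^k)-M_k$, exactly as in the paper. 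Even your remark that only the perturbed gradient enters (so Assumption \ref{as:noise} is never used) mirrors the paper, which likewise applies the linear lower bound $f(y^{k-1})\geq f(x^k)+\left\langle \widetilde{\nabla} f(x^k), y^{k-1}-x^k \right\rangle$ with the perturbed gradient in place of the true one.
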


From Lemma \ref{lemma_upper_bound_Ak_fk}, we can conclude the following corollary.

\begin{corollary}\label{corr_rate_adaptive}
Let $f$ be a convex (Assumption \ref{as:cvx}) and smooth (Assumption \ref{as:Lsmooth}) function.  For all $k \geq 0$, we have 
\begin{equation}\label{adaptive_estimate}
    f(y^k) - f(x^*) \leq \frac{h(x^*)}{A_k} + \frac{1}{A_k} \sum_{i = 0}^{k}B_i \delta_i, 
\end{equation}
where $h(x^*) := \frac{1}{2} \|x^* - x^0\|_2^2$. 
\end{corollary}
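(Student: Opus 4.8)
The plan is to read the corollary directly off Lemma~\ref{lemma_upper_bound_Ak_fk} by supplying an explicit upper estimate for the quantity $\Psi_k^*$ defined in \eqref{psi_star} and then dividing by $A_k$. Rearranging \eqref{ineq_upper_bound_Ak_fk} gives $A_k f(y^k) \leq \Psi_k^* + M_k$ with $M_k = \sum_{i=0}^k B_i \delta_i$, so the entire task reduces to showing $\Psi_k^* \leq \tfrac12\|x^*-x^0\|_2^2 + A_k f(x^*)$.

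First I would exploit that $\Psi_k^*$ is a minimum over the feasible set $Q$. Since the minimizer $x^*$ of $f$ over $Q$ is itself feasible, evaluating the objective in \eqref{psi_star} at $x=x^*$ yields $\Psi_k^* \leq \Psi_k(x^*) = \tfrac12\|x^*-x^0\|_2^2 + \sum_{j=0}^k \alpha_j\bigl(f(x^j) + \langle \widetilde{\nabla} f(x^j), x^*-x^j\rangle\bigr)$. This is the only place feasibility of $x^*$ enters, and it is the one genuinely free inequality available from the lemma.

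Next I would bound the linearized sum by $A_k f(x^*)$. Convexity (Assumption~\ref{as:cvx}) gives the supporting-hyperplane inequality $f(x^j)+\langle \nabla f(x^j), x^*-x^j\rangle \leq f(x^*)$ at each iterate; multiplying by the nonnegative weights $\alpha_j$ and summing, while using $A_k=\sum_{j=0}^k\alpha_j$ (which follows from $\alpha_0=A_0$ and the recursion \eqref{Ak_adaptive_alg}), collapses the right-hand side to $A_k f(x^*)$. Substituting this into the estimate for $\Psi_k(x^*)$ and dividing through by $A_k>0$ produces exactly \eqref{adaptive_estimate} with $h(x^*)=\tfrac12\|x^*-x^0\|_2^2$.

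The main obstacle, and the step needing the most care, is that the sum in $\Psi_k(x^*)$ is built from the inexact oracle directions $\widetilde{\nabla} f(x^j)$, whereas the convexity inequality controls the exact directions $\nabla f(x^j)$. To obtain the clean bound \eqref{adaptive_estimate}, which carries no explicit noise term, one must argue that each linearization $f(x^j)+\langle\widetilde{\nabla} f(x^j),x^*-x^j\rangle$ still lies below $f(x^*)$, i.e.\ that the oracle direction furnishes a valid lower model of $f$ at $x^*$; this is the delicate point on which the whole estimate hinges, since otherwise a residual $\alpha_j\langle\widetilde{\nabla} f(x^j)-\nabla f(x^j),x^*-x^j\rangle$ would survive into the right-hand side and would have to be absorbed separately.
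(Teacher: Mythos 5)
Your argument is, step for step, the paper's own proof of Corollary~\ref{corr_rate_adaptive}: invoke Lemma~\ref{lemma_upper_bound_Ak_fk}, use feasibility of $x^*$ to bound $\Psi_k^* \leq \Psi_k(x^*)$, collapse the linearized sum to $A_k f(x^*)$, and divide by $A_k$. So the proposal is correct in exactly the sense the paper's proof is.

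The one point worth spelling out is that the ``delicate point'' you flag at the end is not resolved in the paper either: the paper's proof writes the chain
\begin{equation*}
h(x^*) + \sum_{j=0}^k \alpha_j\left(f(x^j) + \left\langle \widetilde{\nabla} f(x^j), x^* - x^j\right\rangle\right) \leq h(x^*) + \sum_{j=0}^k \alpha_j f(x^*)
\end{equation*}
without comment, i.e.\ it applies the convexity lower bound as if $\widetilde{\nabla} f(x^j)$ were the exact gradient. Under Assumptions \ref{as:cvx}, \ref{as:Lsmooth}, and \ref{as:noise} alone this inequality can fail; the honest version carries the residual $\sum_{j} \alpha_j \langle \widetilde{\nabla} f(x^j) - \nabla f(x^j),\, x^* - x^j\rangle$, which by Cauchy--Schwarz is at most $\sum_{j} \alpha_j \hat{\varepsilon}\,\|\nabla f(x^j)\|_2\, \|x^* - x^j\|_2$ and would have to be absorbed into the $\delta_j$'s (in the spirit of the Devolder--Glineur--Nesterov inexact-oracle model, where the lower linearization bound is part of the oracle definition) or into a bounded-domain constant. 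The same implicit step occurs inside the paper's proof of Lemma~\ref{lemma_upper_bound_Ak_fk} itself, where $f(y^{k-1}) \geq f(x^k) + \langle \widetilde{\nabla} f(x^k), y^{k-1} - x^k\rangle$ is used. So your closing caveat identifies a genuine issue shared by the paper's analysis, not an obstacle specific to your write-up; as a referee's comment it would be well taken, but as a reproduction of the paper's proof your proposal matches it completely.
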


For the convergence rate of \texttt{AIM} (Algorithm \ref{adaptive_alg}), we establish it in the following theorem. 

\begin{theorem}\label{the:rate_adaptive_alg}
Let $f$ be convex (Assumption \ref{as:cvx}) and smooth (Assumption \ref{as:Lsmooth}) function. Let  $p \in [1,2]$ be the intermediate parameter, and let us assume that an upper bound $R_0$ for the distance from the starting point $x_0$ to a solution $x^*$  is known. Then, with sequences $\{\alpha_k\}_{k \geq 0}, \{B_k\}_{k \geq 0}$ given in \eqref{alpha_k_adaptive_alg}, \eqref{B_k_adaptive_alg}, respectively, and sequence $\{\delta_k\}_{k \geq 0}$, for \texttt{AIM}, we have
\begin{equation}\label{estimate2_adptive_alg}
    f(y^k) - f(x^*) \leq \frac{ 4  R_0^2 }{(k+2)^p} \left(\max_{0 \leq i \leq k} L_i\right)  + 2\left(\max_{0 \leq i \leq k} \delta_i\right) k^{p-1}, \quad \forall k \geq 1.
\end{equation}
\end{theorem}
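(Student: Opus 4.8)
The plan is to feed Corollary \ref{corr_rate_adaptive} into the two coefficient sequences and reduce everything to two scalar estimates. The corollary already gives $f(y^k) - f(x^*) \le \frac{h(x^*)}{A_k} + \frac{1}{A_k}\sum_{i=0}^k B_i\delta_i$ with $h(x^*) = \tfrac12\|x^* - x^0\|_2^2$. Since $R_0 \ge \|x^0 - x^*\|_2$, I would immediately replace $h(x^*)$ by $\tfrac12 R_0^2$, so that the first summand is at most $\frac{R_0^2}{2A_k}$. The proof then splits into (i) a lower bound on $A_k$ of order $(k+2)^p/\max_i L_i$, matching the first term of \eqref{estimate2_adptive_alg}, and (ii) a bound showing the weighted $\delta$-average contributes at most $2(\max_i\delta_i)k^{p-1}$, matching the second term.

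For step (i), I would write $A_k = \sum_{j=0}^k \alpha_j$ and use the termwise bound $\frac{1}{L_j} \ge \frac{1}{\max_{0\le i\le k}L_i}$ coming from \eqref{alpha_k_adaptive_alg}, reducing the problem to lower bounding $S := \sum_{j=0}^k\left(\frac{j+2p}{2p}\right)^{p-1}$. Because $p\in[1,2]$ forces the exponent $p-1\in[0,1]$, the summand is nondecreasing in $j$, so an integral comparison $\sum_{j=1}^k g(j)\ge\int_0^k g$ together with the value $1$ of the $j=0$ term yields $S \ge 1 + 2\big[(\tfrac{k+2p}{2p})^p - 1\big] = 2\big(\tfrac{k+2p}{2p}\big)^p - 1$. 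An elementary (but delicate) comparison then gives $S \ge \frac{(k+2)^p}{8}$, hence $A_k \ge \frac{(k+2)^p}{8\max_i L_i}$ and $\frac{h(x^*)}{A_k} \le \frac{4R_0^2\max_i L_i}{(k+2)^p}$.

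For step (ii), the crucial algebraic observation is that \eqref{B_k_adaptive_alg} and \eqref{alpha_k_adaptive_alg} give the factorization $B_i = \alpha_i^2 L_i = \alpha_i\,(\alpha_i L_i) = \alpha_i\left(\frac{i+2p}{2p}\right)^{p-1}$, expressing $B_i$ as $\alpha_i$ times a nondecreasing factor. Bounding that factor by its value at $i=k$ gives $\sum_{i=0}^k B_i \le \left(\frac{k+2p}{2p}\right)^{p-1}\sum_{i=0}^k\alpha_i = \left(\frac{k+2p}{2p}\right)^{p-1}A_k$, so after bounding $\delta_i\le\max_i\delta_i$ the whole second term is at most $(\max_i\delta_i)\left(\frac{k+2p}{2p}\right)^{p-1}$. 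Finally, $\frac{k+2p}{2p} = \frac{k}{2p}+1 \le \frac{k}{2}+1 \le \frac{3k}{2}$ for $k\ge1$, so $\left(\frac{k+2p}{2p}\right)^{p-1} \le (3/2)^{p-1}k^{p-1} \le 2k^{p-1}$, which delivers the second term of \eqref{estimate2_adptive_alg}. Adding the two bounds completes the argument.

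I expect the main obstacle to be step (i): making the lower bound on $A_k$ hold uniformly over the full range $p\in[1,2]$ with exactly the constant needed for the factor $4$ in the statement. The summand carries a non-integer power, so the integral comparison must be oriented correctly (keeping the $j=0$ term), and the passage from $2\big(\frac{k+2p}{2p}\big)^p - 1$ to $\frac{(k+2)^p}{8}$ is subtle because at the endpoint $p=2$ the leading-order terms in $k$ cancel and positivity is only salvaged by the lower-order contribution. Step (ii), by contrast, is essentially a one-line consequence of the factorization of $B_i$ and monotonicity, so I anticipate no real difficulty there.
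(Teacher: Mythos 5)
Your proof is correct, and at the top level it follows the same skeleton as the paper's: plug the coefficient formulas \eqref{alpha_k_adaptive_alg}, \eqref{B_k_adaptive_alg} into Corollary \ref{corr_rate_adaptive}, lower-bound $A_k$, and upper-bound $\sum_{i=0}^k B_i\delta_i$ against $A_k$; your step (ii) (the factorization $B_i=\alpha_i\left(\frac{i+2p}{2p}\right)^{p-1}$, freezing the nondecreasing factor at $i=k$, and $\left(\frac{k+2p}{2p}\right)^{p-1}\le 2k^{p-1}$ for $k\ge1$) is literally the paper's argument. The genuine divergence is in step (i), and there your route is the sounder one. The paper bounds every $L_j$ by the \emph{global} constant via $L_j\le 2L$, derives $A_k\ge\frac{1}{L}\left(\frac{k+2}{4}\right)^p$, obtains the first term as $\frac{8LR_0^2}{(k+2)^p}$, and only in its last line replaces $8L$ by $4\max_{0\le i\le k}L_i$ --- a substitution that requires $\max_i L_i\ge 2L$, which the adaptive line search does not guarantee (it guarantees the reverse inequality $L_i \le 2L$, and adaptivity is valuable precisely because the $L_i$ can be much smaller than $L$). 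Your termwise bound $1/L_j\ge 1/\max_{0\le i\le k}L_i$ produces the $\max_i L_i$ factor directly and avoids this issue; you also keep the $-1$ in the integral comparison that the paper silently drops (its claimed inequality $\int_0^k\left(\frac{x+2p}{2p}\right)^{p-1}dx+1\ge 2\left(\frac{k+2p}{2p}\right)^p$ is false as written, since the integral equals $2\left(\frac{k+2p}{2p}\right)^p-2$). The price of your route is the inequality $S\ge\frac{(k+2)^p}{8}$ for $S=\sum_{j=0}^k\left(\frac{j+2p}{2p}\right)^{p-1}$, which you assert but leave unproved; it does hold for all $k\ge1$, $p\in[1,2]$, and can be closed as follows: combining your bound $S\ge 2\left(\frac{k+2p}{2p}\right)^p-1$ with $\frac{k+2p}{2p}=1+\frac{k}{2p}\ge\frac{k+4}{4}$ and the convexity estimate $(k+4)^p-(k+2)^p\ge 2p(k+2)^{p-1}$ gives
\begin{equation*}
S-\frac{(k+2)^p}{8}\;\ge\;\frac{2(k+4)^p}{4^p}-1-\frac{(k+2)^p}{8}\;\ge\;\frac{p}{4}(k+2)^{p-1}+\frac{16-4^p}{8\cdot 4^p}\,(k+4)^p-1,
\end{equation*}
and both summands on the right are nonnegative and nondecreasing in $k$, so it suffices to check $k=1$, where $\frac{p\,3^{p-1}}{4}+\frac{16-4^p}{8\cdot 4^p}\,5^p\ge\frac{3}{2}$ for every $p\in[1,2]$ (near $p=2$ the first summand carries the bound, near $p=1$ the second; this is exactly the cancellation at $p=2$ you anticipated). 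With this small lemma written out, your argument yields \eqref{estimate2_adptive_alg} exactly as stated --- in terms of $\max_{0\le i\le k}L_i$ and with the constant $4$ --- which the paper's own final step does not rigorously deliver.
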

\textbf{Sketch of the proof.} For any $p \in [1, 2]$,  we have 
$$
 A_k \geq \frac{1}{L} \left(\frac{k+2}{4}\right)^p, \quad \sum_{j = 0}^{k} B_j \delta_j \leq  \left(\frac{k + 2p} {2p}\right)^{p-1}  \left(\max_{0 \leq i \leq k} \delta_i\right) A_k.
$$
Therefore from Corollary \ref{corr_rate_adaptive}, we get 
$$
 f(y^k) - f(x^*) \leq \frac{ 4  R_0^2  }{(k+2)^p}\left(\max_{0 \leq i \leq k} L_i\right) + 2\left(\max_{0 \leq i \leq k} \delta_i\right) k^{p-1},
$$
where $h(x^*) = \frac{1}{2}\|x^* - x^0\|_2^2 \leq R_0^2$. For the full proof, see Subsect. \ref{sect_missing_proofs_AIM}.

\bigskip 

Now, from Assumptions \ref{as:Lsmooth} and \ref{as:noise}, for any $ x, y \in Q$, we have 
\begin{equation*}
    \begin{aligned}
        f(y) & \leq f(x) + \left\langle \widetilde{\nabla}  f(x), y - x \right\rangle + \frac{L}{2} \left\|y - x\right\|_2^2  + \left\langle \nabla f(x) - \widetilde{\nabla} f(x), y - x \right\rangle
        \\& \leq f(x) + \left\langle \widetilde{\nabla}  f(x), y - x \right\rangle + \frac{L}{2} \left\|y - x\right\|_2^2  + \left\| \widetilde{\nabla} f(x) - \nabla f(x)\right\|_2 \cdot \| y  - x\|_2
        \\& \leq f(x) + \left\langle \widetilde{\nabla}  f(x), y - x \right\rangle + \frac{L}{2} \left\|y - x\right\|_2^2  + \hat{\varepsilon}\left\| \nabla f(x)\right\|_2 \cdot \| y  - x\|_2.
    \end{aligned}
\end{equation*}
But, from Assumption \eqref{as:noise}, we get the following
\begin{equation}\label{res_relative_noise}
    (1-\hat{\varepsilon}) \left\|\nabla f(x)\right\|_2 \leq \|\widetilde{\nabla} f(x)\|_2  \leq (1+\hat{\varepsilon}) \left\|\nabla f(x)\right\|_2, \quad \forall x \in Q.
\end{equation}
From this, we have 
\begin{equation}
    \left\|\nabla f(x)\right\|_2 \leq \frac{1}{1-\hat{\varepsilon}} \|\widetilde{\nabla} f(x)\|_2, \quad \forall x \in Q.
\end{equation}
Therefore, for any $x , y \in Q$,  we have 
\begin{equation*}
    \begin{aligned}
        f(y) & \leq f(x) + \left\langle \widetilde{\nabla}  f(x), y - x \right\rangle + \frac{L}{2} \left\|y - x\right\|_2^2  + \frac{\hat{\varepsilon}}{1-\hat{\varepsilon}} \|\widetilde{\nabla} f(x)\|_2 \cdot \| y  - x\|_2
        \\& \leq f(x) + \left\langle \widetilde{\nabla}  f(x), y - x \right\rangle + \frac{L}{2} \left\|y - x\right\|_2^2  + \frac{c L}{2} \| y  - x\|_2^2 
        \\& \;\;\;\; +  \frac{\hat{\varepsilon}^2}{c(1-\hat{\varepsilon})^2} \|\widetilde{\nabla} f(x)\|_2^2,
    \end{aligned}
\end{equation*}
for any $c >0$. This means that, for any $x, y \in Q$, we have
\begin{equation}\label{eq_res_smooth_relative}
    f(y)  \leq     f(x) + \left\langle \widetilde{\nabla}  f(x), y - x \right\rangle + \frac{ (1 + c) L}{2} \| y  - x\|_2^2   +  \frac{\hat{\varepsilon}^2}{c(1-\hat{\varepsilon})^2} \|\widetilde{\nabla} f(x)\|_2^2.
\end{equation}
Therefore, for some fixed $\hat{c}$, such that: $ \hat{c} \leq c (1-\hat{\varepsilon})^2; \, \forall c >0$, and with $\hat{L} : = L\left(1+ \frac{\hat{c}}{(1-\hat{\varepsilon})^2}\right) $, for any $x, y \in Q$, it holds
\begin{equation}\label{eq_fdf4112}
    f(y) \leq f(x) + \left\langle \widetilde{\nabla}  f(x), y - x \right\rangle + \frac{\hat{L}}{2} \left\|y - x\right\|_2^2 + \frac{\hat{\varepsilon}^2\|\widetilde{\nabla} f(x)\|_2^2 }{\hat{c}} .
\end{equation}

From \eqref{eq_fdf4112}, we find that for each $\delta_k>0$ (we can put $\delta_k = \frac{\hat{\varepsilon}^2\|\widetilde{\nabla} f(x^k)\|_2^2 }{\hat{c}}$ for some $\hat{c} >0$) there is such $L_k > 0$ that the following inequality holds
$$
f(x^{k+1}) \leq f(x^k) + \left\langle \widetilde{\nabla}  f(x^k), x^{k+1} - x^k \right\rangle + \frac{L_k}{2} \left\|x^{k+1} - x^k\right\|_2^2 + \delta_k.
$$
Therefore, we can apply Algorithm \ref{adaptive_alg} to the problem under consideration \eqref{cons_problem}, with access to relative noise in the gradient $\widetilde{\nabla} f(x)$ of the objective function $f$, and due to the adaptivity we can take $L_k < \hat{L}$ and decrease $\delta_k$.

\begin{remark}
Depending on \eqref{estimate2_adptive_alg}, we can reconstruct \texttt{AIM}, and propose an adaptive algorithm, with adaptation in two parameters $L$ and $p$ (see Algorithm \ref{adaptive_alg_L_p}). 
In this algorithm, we start from the case when $p = 2$, which is the case for which the estimate \eqref{estimate2_adptive_alg} is the best at the first iterations. Then at the iteration, when the estimate \eqref{estimate2_adptive_alg} deteriorates we decrease $p$ until $p=1$. As a result, by Algorithm \ref{adaptive_alg_L_p} we get a solution to the problem  \eqref{cons_problem}, with an estimate of a solution better than an estimate with fixed $ p \in [1, 2]$ as in \texttt{AIM}. See Figs. \ref{res_alg1_alg3_a_5_15} and \ref{res_adaptive2_error_09_099} (in the right).   
\end{remark}

\begin{algorithm}[htp]
\caption{Adaptive Intermediate Method in $L$ and $p$ (\texttt{AIM} with variable $p$).}\label{adaptive_alg_L_p}
\begin{algorithmic}[1]
   \REQUIRE $x^0$ initial point, $L_0 > 0$ initial guess for the Lipschitz constant, positive sequence $\{\delta_k\}_{k \geq 0}$, $R_0$ s.t. $R_0^2 \geq \frac{1}{2} \left\|x^*- x^0\right\|_2^2 $, $\eta \ll 1$ small enough parameter.
   \STATE Set $p_0:= 2$, and  $E_0 = L_0 R_0^2$.
   \FOR{$k=0, 1,   \ldots$}
   \STATE Run \texttt{AIM} (Algorithm \ref{adaptive_alg}) with parameters $x^k, L_k, p_k$, i.e. run \texttt{AIM}$(x^k, L_k, p_k)$.
   \STATE
   Calculate
   $$
   E_{k+1} : = \frac{ 4  R_0^2 }{(k+3)^{p_k}} \left(\max_{0 \leq i \leq k+1} L_i \right) + 2\left(\max\limits_{0 \leq i \leq k+1} \delta_i\right) (k+1)^{p_k-1}.
   $$
   \STATE  \textbf{If} $E_{k+1} \leq E_{k}$, \textbf{then} go to the next iteration $k \to k+1$. \textbf{Else} run \texttt{AIM}$(x^k, L_k, p_k - \eta)$. 
   \ENDFOR
\end{algorithmic}
\end{algorithm}

\begin{remark}
If we assume that for any $x \in Q$, where $Q$ is bounded, we have $\|\widetilde{\nabla} f(x)\|_2 \leq M$ for some $M>0$. Then from \eqref{estimate2_adptive_alg}, when $p = 1$, we get 
\begin{equation*}
    \begin{aligned}
        f(y^k) - f(x^*) & \leq \frac{4R_0^2}{k + 2} \left(\max_{0 \leq i \leq k} L_i\right) + 2 \left(\max_{0 \leq i \leq k } \delta_i\right)
        \leq \frac{8\hat{L}R_0^2}{k+2} + \frac{\hat{\varepsilon}^2 M^2 }{\hat{c}}.
    \end{aligned}
\end{equation*}
From this we find 
$$
f(y^k ) - f(x^*) \leq O\left(\max \left\{ \frac{\hat{L}R_0^2}{k+2}, \frac{\hat{\varepsilon}^2 M^2 }{\hat{c}} \right\}\right), \quad \forall k \geq 0.
$$
Therefore, we got a similar result as for Algorithm \ref{alg_STM_relative} with $p = 1$. 
\end{remark}


\section{Numerical Experiments}\label{sec:exps}

The code we used in experiments is available at \\
https://github.com/Jhomanik/InterRel.

\subsection{\texttt{ISTM} PEP Experiments}
In order to find the tight convergence rate that \texttt{ISTM} (Algorithm \ref{alg_STM_relative}) achieves in Theorem \ref{theo:coveregence_alg1}, we consider next optimization problem with fixed $N, \hat{\varepsilon}$, and $a$
\begin{equation}\label{eq:max_problem_intro}
 \tau^N := \max\limits_{n, f, x^0} f(x^N) - f(x^*),   
\end{equation}
subject to: $f:\R^n \rightarrow \R$ is an convex and $L$-smooth  function, $0 \in \nabla f(x^*)$,  and
\begin{eqnarray} 
    && \|x^0 - x^*\|^2_2 \leq R^2,  \\
    && \|\widetilde{\nabla} f(x^k) - \nabla f(x^k)\|^2_2 \leq \hat{\varepsilon}^2 \|\nabla f(x^k)\|^2_2, \quad k = \overline{0,  N-1}, \label{eq:pep_rel} \\
    && x^{k+1}, y^{k+1}, z^{k+1} = \text{ISTMstep}(x^{k}, y^k, z^k, \widetilde{\nabla} f(x^k)), \quad k = \overline{0,  N-1}.\label{eq:pep_istm}
\end{eqnarray}
In other words, we are looking for the function $f$ from the set of convex and $L$-smooth functions that after $N$ iterations of \texttt{ISTM} \eqref{eq:pep_istm} with relative noise $\hat{\varepsilon}$ \eqref{eq:pep_rel}, gives the worst possible accuracy $\max\limits_{f} f(x^N) - f(x^*)$. 

This problem is usually called the Performance Estimation Problem (PEP) and can be equivalently reformulated as a semi-definite program (SDP), see \cite{de2017worst,drori2014performance,taylor2017smooth}.  Then we numerically solve this problem via MOSEK solver \cite{aps2019mosek}. We provide additional details about PEP in Section \ref{sec:pep_theory}.

\bigskip
\noindent
\textbf{Experiment 1.}

We check how optimal our choice of the parameter $a$ of \texttt{ISTM} is for the given relative noise $\hat{\varepsilon}$, intermediate parameter $p$, and number of iterations $N$. We will work under a $L$-smooth and convex setting. Design is as follows: for all $i$ from $0$ to $N_{\max}$ with fixed $a, \hat{\varepsilon}, p$, we calculate the value of the maximum overall convex, $L$ smooth functions difference $ \tau_i = \max_f f(y^i) - f(x^*)$ from the maximization problem \eqref{eq:max_problem} using the PEPit framework. Then we look for the moment $N_{\text{pep}}(a, \hat{\varepsilon}, p)$ at which the sequence of $\{\tau_i\}_{i=0}^{N_{\max}}$ stops to decrease. We compare numerically calculated map $N_{\text{pep}} (a, \hat{\varepsilon}, p)$ with theoretical map $a = C^2 N^p \hat{\varepsilon} ^2$ or $N_{\text{theory}}(a, \hat{\varepsilon}, p) = \left(\frac{C^2 a}{\hat{\varepsilon}^2}\right)^{\frac{1}{p}}$.

Fig. \ref{fig:conv_with_opt_a_diff_p} shows the dependence of the maximum difference $\tau^N$ on the number of iterations $N$ for different $p$ and  2 levels of relative noise $\hat{\varepsilon} = 0.75; 0.95$ with smoothness constant $L = 1$ and initial radius $R = 1$.

\begin{figure}[htp]
\centering
{\includegraphics[width=5.5cm]{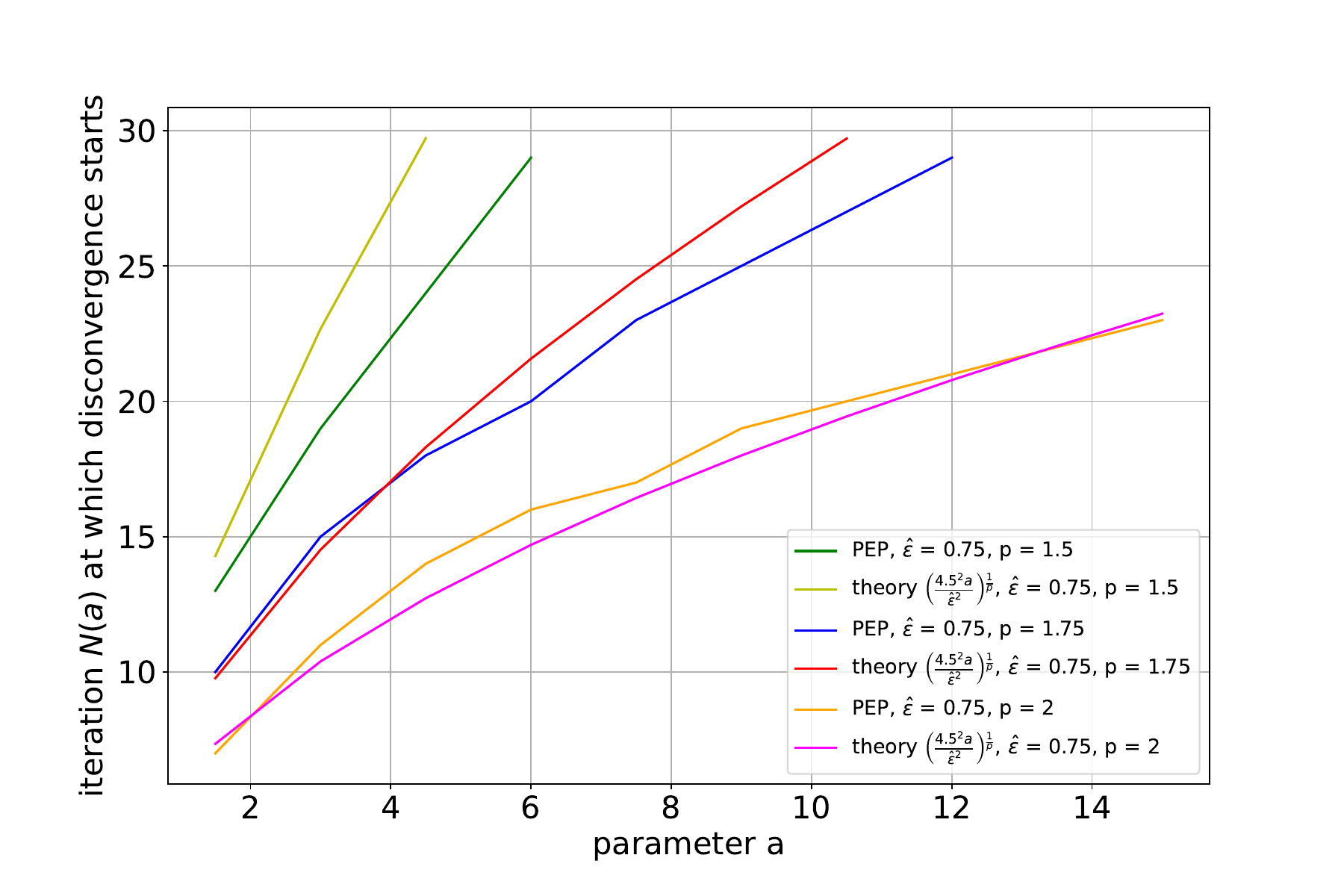} }
{\includegraphics[width=5.5cm]{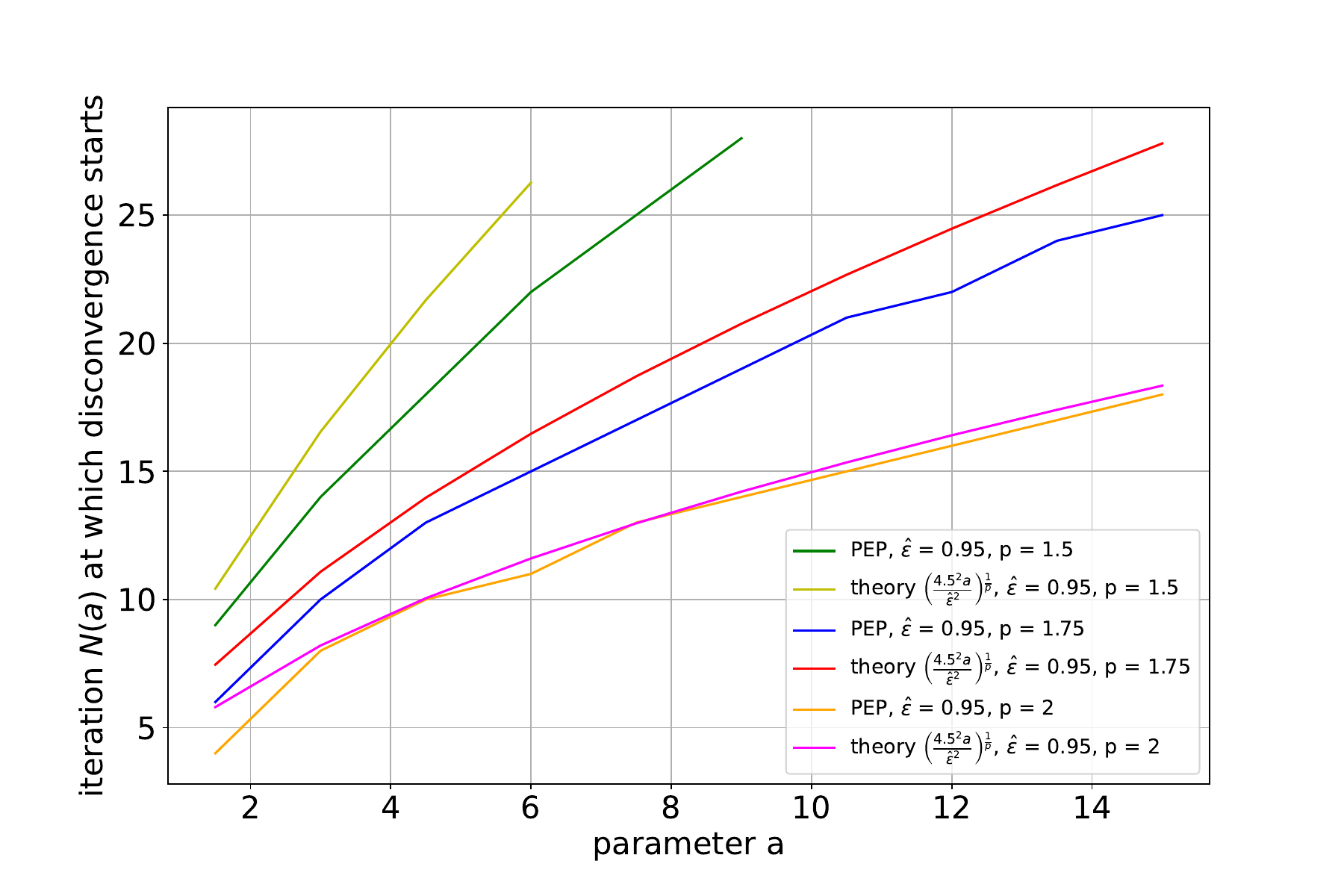} }
\caption{Comparison of theoretical and PEP calculated iteration $N$ after which \texttt{ISTM} with parameter $a$ starts to disconverge for different $p$ under $ L = 1, R = 1$ and \textbf{Left:} $\hat{\varepsilon} = 0.75$, \textbf{Right:} $\hat{\varepsilon} = 0.75$.  }
\label{fig:conv_with_opt_a_diff_p}
\end{figure}
As one can see theoretical and numerical results in the case of arbitrary $p \in [1,2]$ matches well, especially in terms of the forms of the curves. Constant $C$ at which the smallest error of approximation is achieved is $C \sim 4.5$.

\bigskip
\noindent
\textbf{Experiment 2.}

In this experiment, we study the effect of intermediate acceleration on plateau values that \texttt{ISTM} achieves. By convergence formula \eqref{eq:conv_rate_alg1_proper_a} from Theorem \ref{theo:coveregence_alg1},
$$f(y^N) - f(x^*) \leq O\left(\max \left\{ \frac{LR_0^2}{N^p}, \frac{\sqrt{\hat{\varepsilon}} L R_0^2}{N^\frac{3p}{4}}, \frac{\hat{\varepsilon} LR_0^2}{N^\frac{p}{2}}, \hat{\varepsilon}^2 L R_0^2 ,
 \right\}\right)$$
 the plateau level at which the algorithm begins to oscillate after a large number of iterations $N$ does not depend on $p$ and equals $LR_0^2\hat{\varepsilon}^2$. To do this, we consider solving maximization problem \eqref{eq:max_problem} $ \tau^N = \max_f f(y^N) - f(x^*)$ setting the parameter $a$ of \texttt{ISTM}  to $ \max \left\{ 1, N^{\frac{p}{4}} \sqrt{\hat{\varepsilon}}, N^{\frac{p}{2}} \hat{\varepsilon}, N^p \hat{\varepsilon}^2 \right\}$. 
 
 Fig. \ref{fig:plateau_diff_p} shows the convergence of \texttt{ISTM} with proper parameter $a$ for different intermediate parameter $p$. 
 
\begin{figure}[htp]
\centering
{\includegraphics[width=7.5cm]{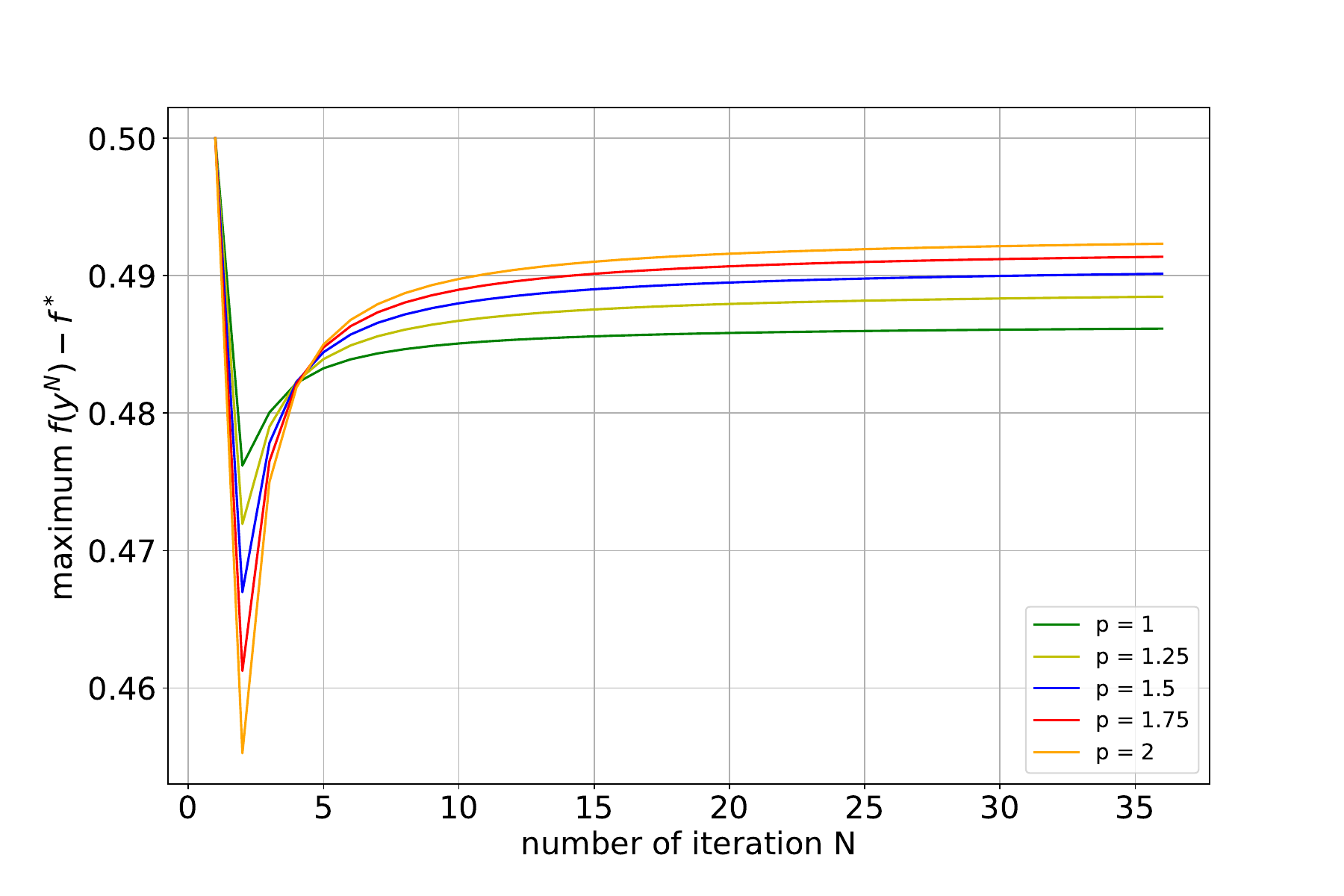} }
\caption{ Plateau level that achieve \texttt{ISTM} with different intermediate parameter $p$ and $L = 1, R = 1, \hat{\varepsilon} = 0.95$.}
\label{fig:plateau_diff_p}
\end{figure}
One can observe that there are differences in the rate of convergence to a plateau, but the plateau levels themselves differ very little from each other. Therefore, PEP confirms the results of Theorem \ref{theo:coveregence_alg1}.

\bigskip
\noindent
\textbf{PEP Experiments discussion}

Based on all the experiments performed, we can conclude that our Theorem \ref{theo:coveregence_alg1} gives tight estimates for the convergence of \texttt{ISTM}. We conjecture that up to the numerical constant, it is impossible to improve them in terms of robustness to relative noise and convergence rate.

\subsection{Comparison of Algorithms \ref{alg_STM_relative}, \ref{adaptive_alg} and \ref{adaptive_alg_L_p}. }  \label{subsec_major_experim}

In this subsection, to demonstrate the performance of the proposed Algorithms \ref{alg_STM_relative} (\texttt{ISTM}), \ref{adaptive_alg} (\texttt{AIM}) and \ref{adaptive_alg_L_p} (\texttt{AIM} with variable $p$), we conduct some numerical experiments for the considered problem \eqref{main_uncons_problem}, with the following objective function, 
methods (see \cite{nesterov2018lectures}). 
\begin{equation}\label{ex_smooth}
    f(x) = \frac{L}{8} \left(x_1^2 + \sum_{i=1}^{n-1} (x_i - x_{i+1})^2 + x_n^2\right) - \frac{L}{4} x_1,
\end{equation}
for some $L>0$. This function is known as the worst-case function for first-order. It is an $L$-smooth, and for it, we have
$$
x^* = \left(1-\frac{1}{n+1}, 1- \frac{2}{n+1}, \ldots, 1- \frac{n}{n+1}\right) \in \mathbb{R}^n. 
$$

We run Algorithms \ref{alg_STM_relative}, \ref{adaptive_alg} and \ref{adaptive_alg_L_p} with $n = 100, L = 1, p = 2$ with different values of $\hat{\varepsilon} \in [0, 1]$. In Algorithm \ref{alg_STM_relative} we take $a = 2$ and in Algorithms \ref{adaptive_alg} and \ref{adaptive_alg_L_p} we take $\delta_k =\frac{\hat{\varepsilon}^2 \|\widetilde{\nabla} f(x^k)\|_2^2}{\hat{c}}$, with $\hat{c} = 1000$. 

The results of the work of these compared algorithms are represented in Fig. \ref{res_alg1_alg3_a_5_15}, below. These results demonstrate the difference $f(y^k) - f(x^*)$ at each iteration and theoretical estimates 
\eqref{eq_66_intro} (for \texttt{ISTM}), and estimates \eqref{adaptive_estimate} (which is labeled in the figure by "AIM (estimate 1)"),  \eqref{estimate2_adptive_alg} (which is labeled in the figure by "AIM (estimate 2)")  for \texttt{AIM}.

\begin{figure}[htp]
\centering
{\includegraphics[width=5.5cm]{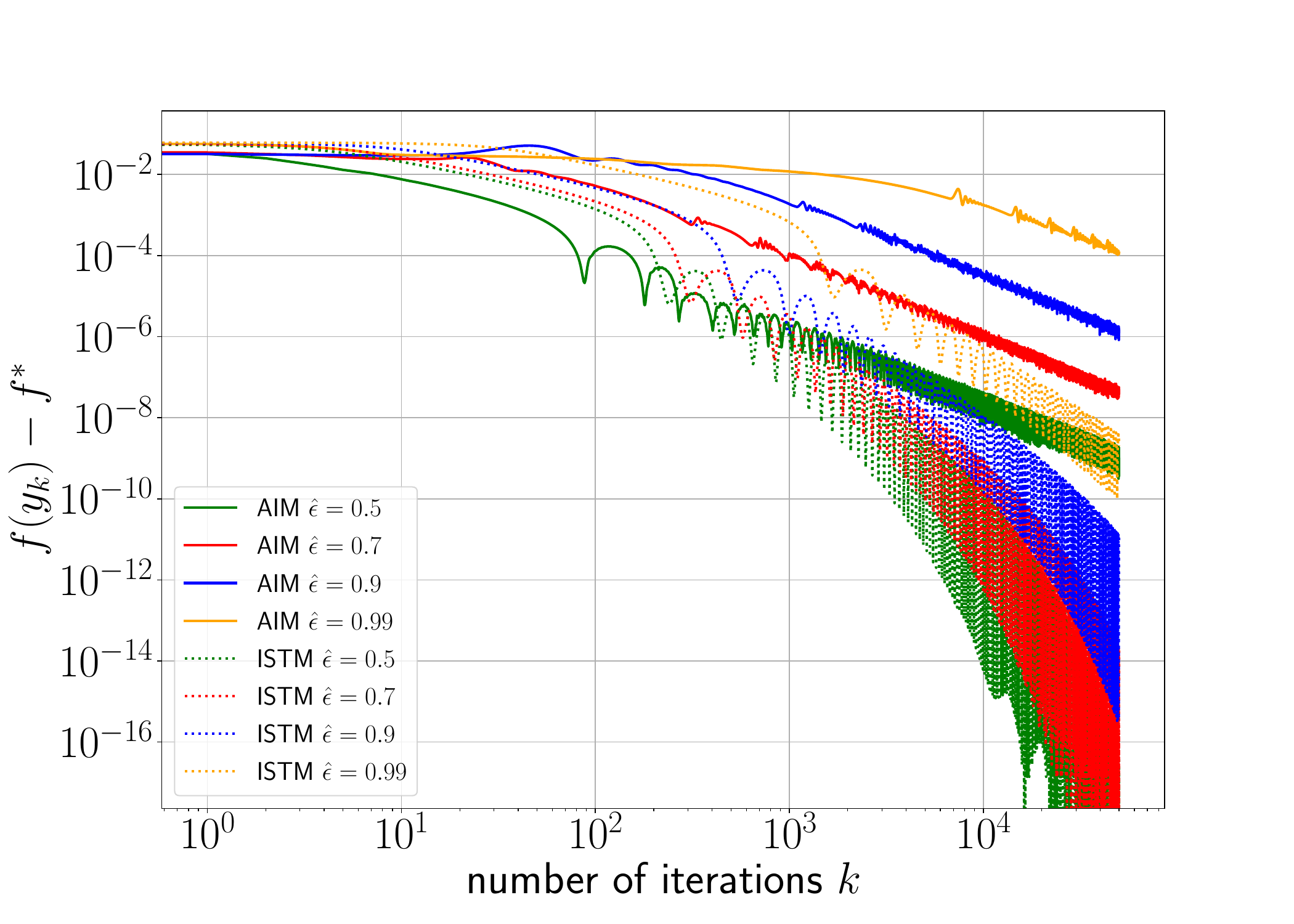} }
{\includegraphics[width=5.5cm]{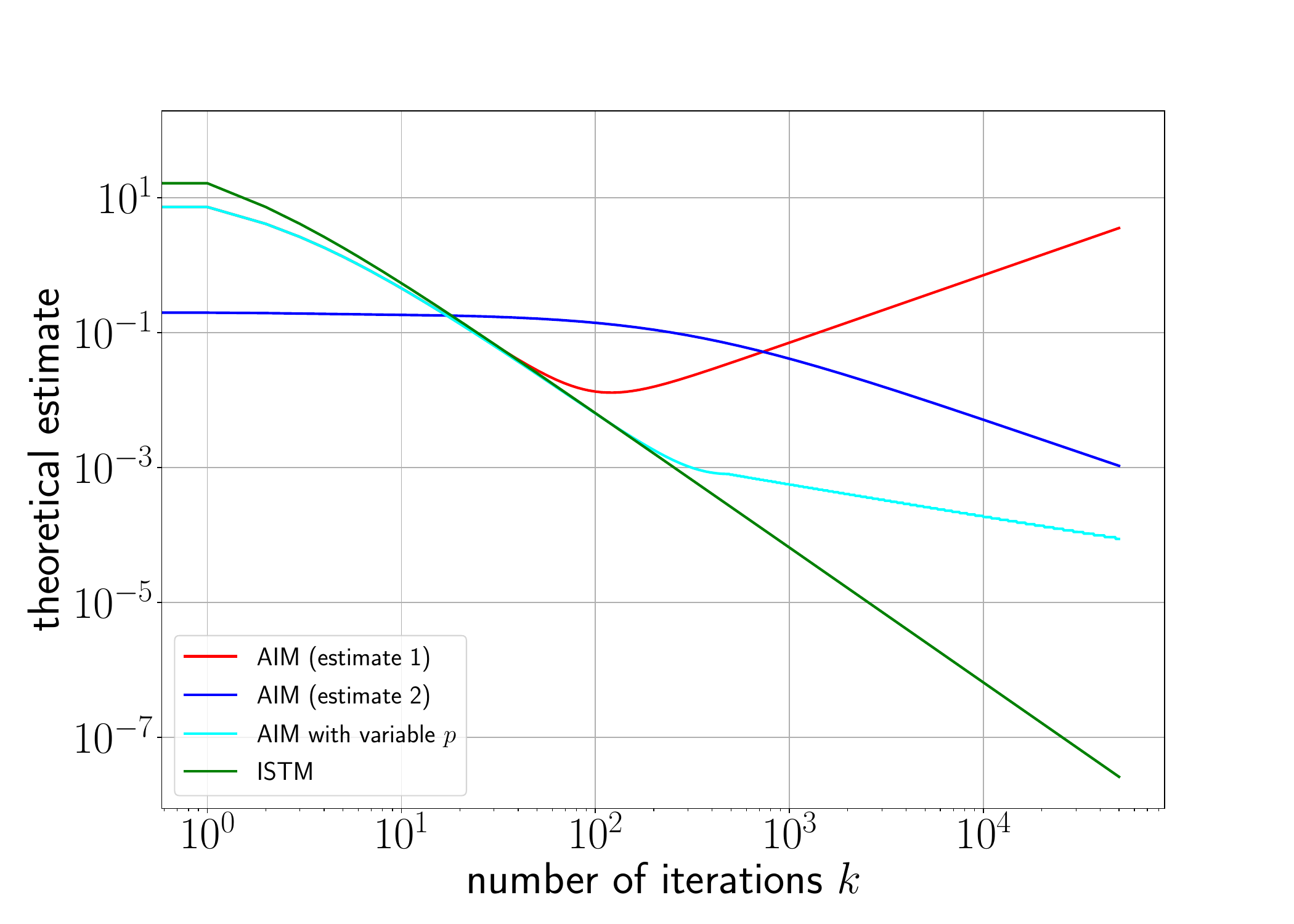} }
\caption{The results of \texttt{ISTM} (Algorithm \ref{alg_STM_relative}), \texttt{AIM} (Algorithm \ref{adaptive_alg}) and \texttt{AIM} with variable $p$ (Algorithm \ref{adaptive_alg_L_p}) for  the objective function \eqref{ex_smooth} with different values of $\hat{\varepsilon} \in [0,1]$. }
\label{res_alg1_alg3_a_5_15}
\end{figure}

From Fig.  \ref{res_alg1_alg3_a_5_15}, for the objective function \eqref{ex_smooth}, we see that through \texttt{ISTM}, we can, for not big values of the parameter $a$, obtain better minimal values for the objective function than the values we obtain through \texttt{AIM} (when we increase the value of $a$, at the first iterations of \texttt{AIM} will give better minimal values of the objective than \texttt{ISTM}, see also Fig. \ref{res_comparision_alg1_3}). Also, for the received theoretical estimates, we can see that \texttt{ISTM} gives a better estimate of a solution when $p = 2$ and for not big values of $a$. When we decrease the value of $p$ and increase $a$, \texttt{AIM} will give better estimates. Also, note that the estimate \eqref{adaptive_estimate} is without any distortion, and it is adaptive where we always can implement it for any smooth objective function without any additional information about the parameters such that $L$. 

Also, from Fig. \ref{res_alg1_alg3_a_5_15}, we can see the efficiency of the proposed Algorithm \ref{adaptive_alg_L_p} (\texttt{AIM} with variable $p$), whereby this algorithm we can obtain a solution of the problem with a better estimate. 

More additional experiments, for Algorithms \ref{alg_STM_relative}, \ref{adaptive_alg} and \ref{adaptive_alg_L_p}, can be found in Subsection \ref{sec_additional_experiments}. 
\section{Conclusions and future plans}
    In this work for convex functions, we showed that \texttt{ISTM} convergences to $\hat{\varepsilon}^2 LR_0^2$ plateau with rate $\sim \frac{1}{N^{p}}$ for any intermediate parameter $p \in [1,2].$ Moreover, for strongly convex functions we found a new lower bound $\hat{\varepsilon} \lesssim \left(\frac{\mu}{L}\right)^{\frac12}$ under which relative noise $\hat{\varepsilon}$ does not affect convergence rate. We obtained it using \texttt{ISTM} for convex and smooth functions and restart technique.  However, there exists a variant of \texttt{ISTM} Algorithm especially for strongly convex functions from work \cite{gorbunov2019optimal}. In the future, we plan to study it with relative noise and, probably, get new bounds on $\hat{\varepsilon}$ and better understatement of the role of intermediate parameter $p$.   
    
    With the help of PEP, we showed that our theoretical results for \texttt{ISTM} are about to be tight. In the future following the work \cite{taylor2023optimal} we plan to find the optimal Intermediate Method and coefficients for it under fixed $\varepsilon, p.$ 

    We also proposed adaptive algorithms (\texttt{AIM}, \texttt{AIM} with variable $p$) for the constrained optimization problem. We investigated adaptivity w.r.t. both Lipschitz smoothness and intermediate parameter $p$.  The adaptivity w.r.t. parameter $p$ along the iteration process, leads to better robustness and a better estimate of a solution to the problem under consideration. We left the study of the adaptive approaches for problems with strongly convex function and relative noise in the gradient as future work.

\section{Acknowledgments}
This work was supported by a grant for research centers in the field of artificial intelligence, provided by the Analytical Center for the Government of the Russian Federation in accordance with the subsidy agreement (agreement identifier 000000D730321P5Q0002) and the agreement with the Moscow Institute of Physics and Technology dated November 1, 2021 No. 70-2021-00138.



   




\bibliography{refs}
\bibliographystyle{unsrtnat}

\newpage
\section{Appendix}\label{sec:appendix}
\subsection{\texttt{ISTM} missing proofs}
In this section, we provide full proofs of technical lemmas and the main theorems. First of all, the following technical lemma allows us to obtain estimates for the size of parameter $A_k$.
\begin{lemma}
Consider two sequences of non-negative numbers $\{\alpha_k\}_{k \geq 0}$ and $ \{A_k\}_{k \geq 0}$ such that
\begin{equation} \label{alpha_general}
\alpha_0=A_0=0, \quad A_{k+1}=A_k+\alpha_{k+1}, \quad \alpha_{k+1}=\frac{(k+2)^{p-1}}{2 a L} \quad \forall k \geq 0,
\end{equation}
where $ a \geq 1, L > 0$. Then for all $k\geq 0$
\begin{eqnarray} 
    A_{k+1} &\geq &aL \alpha_{k+1}^2, \label{ineq_A1}\\
    A_{k+1} &\geq &  \frac{(k+2)^p}{4aL}. \label{Ak_general}
\end{eqnarray}
For $p = 2$, the explicit formula of $A_{k+1}$ looks like
\begin{equation}\label{eq_20}
A_{k+1}=\frac{(k+1)(k+4)}{4 a L}.
\end{equation}
\label{main_lemma1}
\end{lemma}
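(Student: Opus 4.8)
The plan is to first write $A_{k+1}$ explicitly as a partial sum and then prove the three claims in the order \eqref{Ak_general}, \eqref{ineq_A1}, \eqref{eq_20}, since the first bound is the engine for the second and the third is a direct computation. Telescoping the recursion $A_{k+1} = A_k + \alpha_{k+1}$ from $A_0 = 0$ gives
\begin{equation*}
A_{k+1} = \sum_{j=0}^{k} \alpha_{j+1} = \frac{1}{2aL}\sum_{m=2}^{k+2} m^{p-1},
\end{equation*}
after reindexing with $m = j+2$. With this representation in hand, the bound \eqref{Ak_general} is equivalent to the purely numerical inequality $\sum_{m=2}^{k+2} m^{p-1} \ge \tfrac12 (k+2)^p$, which is what I would prove by induction on $k$.

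For the induction, the base case $k=0$ reads $2^{p-1} \ge \tfrac12\cdot 2^p$, which holds with equality. For the inductive step, assuming $\sum_{m=2}^{k+2} m^{p-1} \ge \tfrac12(k+2)^p$, I add the next term $(k+3)^{p-1}$, so it suffices to show
\begin{equation*}
(k+3)^{p-1} \ge \tfrac12\big((k+3)^p - (k+2)^p\big).
\end{equation*}
This is where the constraint $p \in [1,2]$ enters and is the crux of the argument: applying the mean value theorem to $x \mapsto x^p$ on $[k+2,k+3]$ gives $(k+3)^p - (k+2)^p = p\,\xi^{p-1}$ for some $\xi \in (k+2,k+3)$, and since $p-1 \ge 0$ the map $x \mapsto x^{p-1}$ is nondecreasing, so $\xi^{p-1} \le (k+3)^{p-1}$; combined with $p \le 2$ this yields $(k+3)^p - (k+2)^p \le 2(k+3)^{p-1}$, exactly as required. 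I expect this monotonicity-plus-$p\le 2$ estimate to be the only genuinely nontrivial step.

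Given \eqref{Ak_general}, I would derive \eqref{ineq_A1} as an easy consequence. Since $\alpha_{k+1} = (k+2)^{p-1}/(2aL)$, the claim $A_{k+1} \ge aL\alpha_{k+1}^2$ is equivalent to $A_{k+1} \ge (k+2)^{2(p-1)}/(4aL)$; because $2(p-1) \le p$ for $p \le 2$ and $k+2 \ge 1$, we have $(k+2)^{2(p-1)} \le (k+2)^p$, so \eqref{ineq_A1} follows immediately from \eqref{Ak_general}. Finally, for \eqref{eq_20} I would just specialize $p=2$ in the partial-sum formula: $\sum_{m=2}^{k+2} m = \tfrac12(k+2)(k+3) - 1$, whence $A_{k+1} = \big((k+2)(k+3)-2\big)/(4aL)$, and factoring $(k+2)(k+3) - 2 = (k+1)(k+4)$ gives the stated closed form.
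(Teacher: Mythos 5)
Your proof is correct, and it diverges from the paper's in the one step that matters. Both arguments start from the same partial-sum representation $A_{k+1} = \frac{1}{2aL}\sum_{m=2}^{k+2} m^{p-1}$, and your derivations of \eqref{ineq_A1} (from $2p-2\le p$ and $k+2\ge 1$) and of the $p=2$ formula \eqref{eq_20} (direct summation and factoring $(k+2)(k+3)-2=(k+1)(k+4)$) coincide with the paper's. The difference is in \eqref{Ak_general}: the paper bounds the sum from below by the integral $\int_0^{k+1}(x+1)^{p-1}\,dx = \frac{(k+2)^p-1}{p}$, whereas you prove the purely numerical inequality $\sum_{m=2}^{k+2} m^{p-1} \ge \tfrac{1}{2}(k+2)^p$ by induction on $k$, using the mean value theorem together with monotonicity of $x\mapsto x^{p-1}$ and $p\le 2$ in the inductive step. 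Your route is not only different but strictly more careful: the integral bound alone gives $A_{k+1} \ge \frac{(k+2)^p - 1}{2aLp}$, which for $p$ near $2$ does \emph{not} imply $A_{k+1}\ge \frac{(k+2)^p}{4aL}$ (at $p=2$ it yields only $\frac{(k+2)^2-1}{4aL}$), and the paper's intermediate claim $A_{k+1} \ge \frac{(k+2)^p}{4aL} + \frac{1}{2aL}$ does not follow from the line preceding it; the lemma's conclusion is true, but the paper's chain of inequalities is lossy at exactly this point. Your induction closes that gap cleanly (the base case $k=0$ even holds with equality), covers all $k\ge 0$ rather than only $k\ge 1$, and buys a self-contained elementary proof of the constant $\tfrac14$ for every $p\in[1,2]$.
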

\begin{proof}
With $\alpha_0 = A_0 = 0$, we write down $A_{k+1}$ as 
\begin{eqnarray*}
A_{k+1} &=& \sum_{m = 1}^{k+1} \alpha_m = \frac{1}{2aL} \sum_{m = 1}^{k+1} (m+1)^{p-1} \\
&\geq& \frac{1}{2aL} \int_0^{k+1} (x+1)^{p-1} dx = \frac{(k+2)^p}{2aL p} - \frac{1}{2aLp}.
\end{eqnarray*}
Because of $1 \leq p \leq 2$, we have 
\begin{equation*}\label{Ak_general_p_2}
    A_{k+1} \geq \frac{(k+2)^p}{4aL} + \frac{1}{2aL} \geq  \frac{(k+2)^p}{4aL}, \quad \forall k \geq 1.
\end{equation*}
We notice that $\forall p \in [1, 2]: p \geq 2p - 2$. Thus, due to \eqref{Ak_general}, we get  
$$
A_{k+1} \geq \frac{(k+2)^p}{4aL} \geq a L \frac{(k+2)^{2p-2}}{4a^2 L^2} = a L \alpha_{k+1}^2, \quad \forall k \geq 0.
$$
Finally, for $p=2$, the explicit formula looks like $A_{k+1} = \frac{1}{2aL} \sum_{m = 1}^{k+1} (m+1) $. Using  fact that $\sum_{m=1}^{k+1} (m+1) = \frac{(k+1)(k+4)}{2}$ we complete the proof.
\end{proof}

The next lemma provides details of the general convergence of Algorithm \ref{alg_STM_relative} with arbitrary coefficients $\{A_k\}, \{\alpha_k\}$. 
\begin{lemma} \cite{gorbunov2020stochastic} \label{main_lemma2}
Let $f$ be convex function (Assumption \ref{as:cvx}) and $L$-smooth (Assumption \ref{as:Lsmooth}). Let  the stepsize parameter $a$ satisfies $ a \geq 1$ and for coefficients $\{A_k\}, \{\alpha_k\}$ inequality \eqref{ineq_A1} holds true. Then after $N \geq 0$ iterations of Algorithm \ref{alg_STM_relative} with coefficients $\{A_k\}, \{\alpha_k\}$ for all $z \in \mathbb{R}^n$ we have

\begin{equation*}\label{estimate_theo}
\begin{aligned}
A_N(f(y^N)-f(z)) \leq & \frac{1}{2}\left\|z^0-z\right\|_2^2-\frac{1}{2}\left\|z^N-z\right\|_2^2+\sum_{k=0}^{N-1} \alpha_{k+1}\left\langle\theta_{k+1}, z-z^k\right\rangle \\
& +\sum_{k=0}^{N-1} \alpha_{k+1}^2 \left\|\theta_{k+1} \right\|_2^2 + \sum_{k=0}^{N-1} \alpha_{k+1}^2\left\langle\theta_{k+1}, \nabla f(x^{k+1})\right\rangle,
\end{aligned}
\end{equation*}

where 
\begin{equation*}\label{eq_theta}
    \theta_{k+1} : = \widetilde{\nabla} f(x^{k+1}) - \nabla f(x^{k+1}).
\end{equation*}
\end{lemma}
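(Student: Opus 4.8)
The plan is to establish a single per-iteration inequality which, after telescoping over $k=0,\dots,N-1$, collapses to the claimed bound. Throughout I write $g_{k+1}:=\nabla f(x^{k+1})$ and $\widetilde{g}_{k+1}:=\widetilde{\nabla} f(x^{k+1})$, so that $\theta_{k+1}=\widetilde{g}_{k+1}-g_{k+1}$. First I would extract the two structural identities hidden in the \texttt{ISTM} updates. From the $x$-update (Line~\ref{item_4}) and the $y$-update, subtracting gives $A_{k+1}(y^{k+1}-x^{k+1})=\alpha_{k+1}(z^{k+1}-z^k)=-\alpha_{k+1}^2\,\widetilde{g}_{k+1}$; and from $A_{k+1}x^{k+1}=A_ky^k+\alpha_{k+1}z^k$ together with $A_{k+1}=A_k+\alpha_{k+1}$ one gets $A_k(x^{k+1}-y^k)=\alpha_{k+1}(z^k-x^{k+1})$. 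These two relations convert the geometry of the method into the gradient/prox steps used below.

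Next I would apply $L$-smoothness \eqref{smoothness_cond} to $f(y^{k+1})$ around $x^{k+1}$, multiply through by $A_{k+1}$, and substitute $y^{k+1}-x^{k+1}=-\tfrac{\alpha_{k+1}^2}{A_{k+1}}\widetilde{g}_{k+1}$, producing $A_{k+1}f(y^{k+1})\le A_{k+1}f(x^{k+1})-\alpha_{k+1}^2\langle g_{k+1},\widetilde{g}_{k+1}\rangle+\tfrac{L\alpha_{k+1}^4}{2A_{k+1}}\|\widetilde{g}_{k+1}\|_2^2$. The hypothesis \eqref{ineq_A1} together with $a\ge1$ enters precisely here: since $A_{k+1}\ge aL\alpha_{k+1}^2\ge L\alpha_{k+1}^2$, the last coefficient satisfies $\tfrac{L\alpha_{k+1}^4}{2A_{k+1}}\le\tfrac{\alpha_{k+1}^2}{2}$, so the quadratic term is controlled by $\tfrac{\alpha_{k+1}^2}{2}\|\widetilde{g}_{k+1}\|_2^2$. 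In parallel I would bound $A_{k+1}f(x^{k+1})=A_kf(x^{k+1})+\alpha_{k+1}f(x^{k+1})$ by invoking convexity (Assumption~\ref{as:cvx}) twice, comparing $x^{k+1}$ once with $y^k$ and once with an arbitrary $z$, and then using the two structural identities to collapse the inner products into $A_{k+1}f(x^{k+1})\le A_kf(y^k)+\alpha_{k+1}f(z)+\alpha_{k+1}\langle g_{k+1},z^k-z\rangle$.

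The third ingredient is the elementary identity for the Euclidean step $z^{k+1}=z^k-\alpha_{k+1}\widetilde{g}_{k+1}$, namely $\alpha_{k+1}\langle\widetilde{g}_{k+1},z^k-z\rangle=\tfrac12\|z^k-z\|_2^2-\tfrac12\|z^{k+1}-z\|_2^2+\tfrac{\alpha_{k+1}^2}{2}\|\widetilde{g}_{k+1}\|_2^2$. Splitting $\widetilde{g}_{k+1}=g_{k+1}+\theta_{k+1}$ on the left isolates the exact term $\alpha_{k+1}\langle g_{k+1},z^k-z\rangle$ plus the noise contribution $\alpha_{k+1}\langle\theta_{k+1},z-z^k\rangle$. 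Feeding this into the convexity bound and then into the smoothness bound yields the per-step inequality $A_{k+1}f(y^{k+1})\le A_kf(y^k)+\alpha_{k+1}f(z)+\tfrac12\|z^k-z\|_2^2-\tfrac12\|z^{k+1}-z\|_2^2+\alpha_{k+1}\langle\theta_{k+1},z-z^k\rangle+\big(\alpha_{k+1}^2\|\widetilde{g}_{k+1}\|_2^2-\alpha_{k+1}^2\langle g_{k+1},\widetilde{g}_{k+1}\rangle\big)$.

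Finally I would simplify the bracketed residual; the key cancellation is $\alpha_{k+1}^2\|\widetilde{g}_{k+1}\|_2^2-\alpha_{k+1}^2\langle g_{k+1},\widetilde{g}_{k+1}\rangle=\alpha_{k+1}^2\langle\theta_{k+1},\widetilde{g}_{k+1}\rangle=\alpha_{k+1}^2\langle\theta_{k+1},g_{k+1}\rangle+\alpha_{k+1}^2\|\theta_{k+1}\|_2^2$, which reproduces exactly the last two noise sums in the statement. Telescoping over $k=0,\dots,N-1$, using $A_0=0$ (so $A_0f(y^0)=0$ and $\sum_{k=0}^{N-1}\alpha_{k+1}=A_N$) and the telescoping of the distance terms to $\tfrac12\|z^0-z\|_2^2-\tfrac12\|z^N-z\|_2^2$, gives the claim; the degenerate case $N=0$ is trivial since both sides vanish. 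I expect the main obstacle to be bookkeeping rather than any single estimate: one must keep the decomposition $\widetilde{g}=g+\theta$ consistent so that all exact-gradient squared terms cancel and precisely the three stated noise terms survive, while tracking where $a\ge1$ and \eqref{ineq_A1} are actually consumed.
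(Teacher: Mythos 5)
Your proof is correct, and it checks out line by line: the two structural identities follow from Lines 3--6 of Algorithm \ref{alg_STM_relative}, the smoothness bound with $A_{k+1}\ge aL\alpha_{k+1}^2\ge L\alpha_{k+1}^2$ correctly tames the quadratic term, the two convexity applications plus the expansion of $\|z^{k+1}-z\|_2^2$ yield the per-step inequality, and the residual $\alpha_{k+1}^2\langle\theta_{k+1},\widetilde{\nabla}f(x^{k+1})\rangle=\alpha_{k+1}^2\langle\theta_{k+1},\nabla f(x^{k+1})\rangle+\alpha_{k+1}^2\|\theta_{k+1}\|_2^2$ produces exactly the three stated noise sums after telescoping with $A_0=0$. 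The paper itself does not prove this lemma (it is imported verbatim from the cited reference), and your argument is the standard estimating-sequence/telescoping derivation used there, so there is nothing to flag.
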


\bigskip

\noindent
\textbf{Full proof of Theorem \ref{theo:coveregence_alg1}}

Firstly, we complete the proof in a particular case, when  $p=2$ (which is the classical Similar Triangle Method (STM)), and then generalize it for all $p \in [1,2].$

\noindent
\textbf{Classic STM.}

From Lemma \ref{main_lemma2}, for any $N\geq 0$, we have 
\begin{equation}\label{eq_62}
\begin{aligned}
A_N(f(y^N)-f(x^*)) \leq & \frac{1}{2}\left\|z^0-x^*\right\|_2^2-\frac{1}{2}\left\|z^N-x^*\right\|_2^2 
\\& +\sum_{k=0}^{N-1} \alpha_{k+1}\left\langle\theta_{k+1}, x^*-z^k\right\rangle +\sum_{k=0}^{N-1} \alpha_{k+1}^2 \left\|\theta_{k+1} \right\|_2^2
\\&
 + \sum_{k=0}^{N-1} \alpha_{k+1}^2\left\langle\theta_{k+1}, \nabla f(x^{k+1})\right\rangle.
\end{aligned}
\end{equation}
Taking into account that $f(y^N) - f(x^*) \geq 0$, for all  $y^N$, and using the notation $R_k := \|z^k - x^*\|_2$, we find that for all $0 \leq k \leq N$
\begin{equation}\label{eq_64}
\begin{aligned}
    R_k^2 & \leq R_0^2+2 \sum_{l=0}^{k-1} \alpha_{l+1}\left\langle\theta_{l+1}, x^*-z^l\right\rangle+2 \sum_{l=0}^{k-1} \alpha_{l+1}^2\left\langle\theta_{l+1}, \nabla f \left( x^{l+1} \right)\right\rangle 
    \\& \;\;\;\; +2 \sum_{l=0}^{k-1} \alpha_{l+1}^2\left\|\theta_{l+1}\right\|_2^2.
\end{aligned}
\end{equation}
Now, let us prove by the induction, that the inequalities
\begin{equation}\label{eq_65}
\begin{aligned}
 R_k^2 &\stackrel{\eqref{eq_64}}{\leq} R_0^2+2 \sum_{l=0}^{k-1} \alpha_{l+1}\left\langle\theta_{l+1}, x^*-z^l\right\rangle+2 \sum_{l=0}^{k-1} \alpha_{l+1}^2\left\langle\theta_{l+1}, \nabla f\left(x^{l+1}\right)\right\rangle \\& \;\;\;\; +2 \sum_{l=0}^{k-1} \alpha_{l+1}^2\left\|\theta_{l+1}\right\|_2^2 \leq C_1^2 R_0^2,
\end{aligned}
\end{equation}
hold for $k =  0, \ldots, N$ simultaneously, where $C_1$ is defined in \eqref{eq_29}.

For $t = 0$, the inequality \eqref{eq_65} holds since $C_1 \geq 1 $. Now, let us assume (induction hypothesis) that \eqref{eq_65} holds for some $k = T-1 \leq N - 1$  and prove it for $k = T \leq N$. 

From \eqref{eq_62}, we find that
\begin{equation*}\label{eq_66_1}
\begin{aligned}
 f(y^t) - f(x^*)& \leq \frac{1}{2A_t} \Big( R_0^2 + 2 \sum_{l=0}^{t-1} \alpha_{l+1}\left\langle\theta_{l+1}, x^*-z^l\right\rangle +
 \\& \;\;\;\; + 2\sum_{l=0}^{t-1} \alpha_{l+1}^2\left\langle\theta_{l+1}, \nabla f\left(x^{l+1}\right)\right\rangle + 2\sum_{l=0}^{t-1} \alpha_{k+1}^2\left\|\theta_{l+1}\right\|_2^2 \Big),
\end{aligned}
\end{equation*}
holds for $t =  1, \ldots, T -1$. Therefore, because of \eqref{eq_65}, the next inequality also holds true 
\begin{equation}\label{eq_66}
     f(y^t) - f(x^*) \leq \frac{C_1^2 R_0^2}{2 A_t}, \quad t = 1, \ldots, T - 1.
\end{equation}
By line $4$ of Algorithm \ref{alg_STM_relative} we have  $x^1 = \frac{1}{A_1} (A_0 y^0 + \alpha_1 z^0)$. However, $A_0 = 0$ and from line $3$ of  of Algorithm \ref{alg_STM_relative},we get
$$
A_1 = \alpha_1, \quad \text{and} \quad \alpha_1 = \frac{1}{a L} \Longrightarrow L = \frac{1}{a \alpha_1}.
$$
Consequently, $x^1 = z^0$, and we bound $\left\|\nabla f\left(x^1\right)\right\|_2$ at the first iteration as follows
\begin{equation*}
\begin{aligned}
    \left\|\nabla f(x^1)\right\|_2 & =  \left\| \left(\nabla f(z^0) - \widetilde{\nabla} f(z^0)\right) + \left(\widetilde{\nabla} f(z^0) - \nabla f(z^0) \right) + \nabla f(z^0)   \right\|_2 
    \\&   \leq 2 \left\| \widetilde{\nabla} f(z^0) - \nabla f(z^0) \right\|_2 + \left\|\nabla f(z^0)\right\|_2 
    \\& \leq (2 \hat{\varepsilon} + 1) \left\|\nabla f(z^0)\right\|_2  
    \\&  \leq (2 \hat{\varepsilon} + 1)  \left\|\nabla f(z^0) - \nabla f(x^*) \right\|_2  
    \\&  \stackrel{\eqref{eq_6}}{\leq} (2 \hat{\varepsilon} + 1) L\left\|z^0-x^*\right\|_2 =\frac{ (2 \hat{\varepsilon} + 1)   R_0}{a \alpha_1}.
\end{aligned} 
\end{equation*}
Also, for $t = 0, 1, \ldots, T -1$, we have
\begin{equation*}
\begin{aligned}
 \left\|\nabla f (x^{t+1})\right\|_2  & \;\; \leq\left\|\nabla f(x^{t+1})-\nabla f(y^t)\right\|_2 +  \left\|\nabla f(y^t) \right\|_2
 \\& 
\stackrel{\eqref{eq_6},\eqref{eq_8}}{\leq} L\left\|x^{t+1}-y^t \right\|_2 + \sqrt{2 L \left(f(y^t)-f (x^*)\right)}.
\end{aligned}
\end{equation*}
From the definition of $x^{k+1}$ (see line $4$ of Algorithm \ref{alg_STM_relative}) and since $A_{k+1} = \alpha_{k+1} + A_k$ (see line $3$ of Algorithm \ref{alg_STM_relative}), we have
$$
(\alpha_{k+1} + A_k) x^{k+1} = A_k y^k + \alpha_{k+1} z^k,
$$
i.e.
\begin{equation}\label{eq_1m2m}
    x^{k+1} - y^k = \frac{\alpha_{k+1}}{A_k} (z^k - x^{k+1}).
\end{equation}
Let $ \widetilde{R}_0 := R_0,  \widetilde{R}_{k+1} := \max \{ \widetilde{R}_k, R_{k+1}\}$. From \eqref{eq_66} and  \eqref{eq_1m2m}, and since $\alpha_{k+1} = \frac{k+2}{2 a L}$,  we get for $t = 1, \ldots, T -1$, the following
\begin{equation*}
\begin{aligned}
 \left\|\nabla f (x^{t+1})\right\|_2  &\leq \frac{\alpha_{t+1} L}{A_t}\left\|x^{t+1}-z^t\right\|_2+\sqrt{\frac{L C_1^2 R_0^2}{A_t}}  
\\& \stackrel{\eqref{eq_20}}{\leq} \frac{2 L(t+2)}{t(t+3)}\left(\left\|x^{t+1}-x^*\right\|_2+\left\|x^*-z^t\right\|_2\right)  + \frac{2LC_1 R_0 \sqrt{a}}{\sqrt{t(t+3)}}
\\&
\leq \frac{4 L(t+2) \widetilde{R}_t}{t(t+3)} + \frac{2 L C_1 R_0 \sqrt{a}}{\sqrt{t(t+3)}} 
\\& 
\stackrel{\eqref{eq_65}}{\leq} \frac{4 L(t+2) C_1 R_0 }{t(t+3)} + \frac{2 L C_1 R_0 \sqrt{a}}{\sqrt{t(t+3)}} 
\\&\; = 4 L C_1 R_0\left(\frac{t+2}{t(t+3)} + \frac{\sqrt{a}}{2\sqrt{ t(t+3)}}\right) 
\\& = \frac{2 C_1 R_0}{\alpha_{t+1}} \left(\frac{(t+2)^2}{a t(t+3)}+\frac{t+2}{2\sqrt{a t(t+3)}}\right) .
\end{aligned}
\end{equation*} 
Because of $\frac{(t+2)^2}{t(t+3)} \leq \frac{(1+2)^2}{1(1+3)}=\frac{9}{4}$,  we get, for $t = 1, \ldots, T-1$ 
\begin{equation*}
   \left\|\nabla f (x^{t+1})\right\|_2 \leq  \frac{C_1 R_0}{\alpha_{t+1}}\left(\frac{9}{2 a}+\frac{3}{2 \sqrt{a}}\right). 
\end{equation*}
Let $  \frac{9}{2 a}+\frac{3}{2 \sqrt{a}} := \frac{1}{C}$, then we have
\begin{equation*}\label{eq_12nv2}
    \left\|\nabla f (x^{t+1})\right\|_2  \leq \frac{C_1 R_0}{C \alpha_{t+1}}, \quad  \forall t = 1, \ldots, T-1. 
\end{equation*}



Now we go back to inequality
\begin{equation*}
\begin{aligned}
R_T^2 &\leq R_0^2 + \underbrace{2 \sum_{l=0}^{T-1} \alpha_{l+1}\left\langle\theta_{l+1}, x^* - z^l \right\rangle}_{\Circled{1}} + \underbrace{2 \sum_{l=0}^{T-1} \alpha_{l+1}^2\left\|\theta_{l+1}\right\|_2^2}_{\Circled{2}} 
\\& \;\;\;\;  + \underbrace{2 \sum_{l=0}^{T-1} \alpha_{l+1}^2\left\langle\theta_{l+1},\nabla f(x^{l+1}) \right\rangle}_{\Circled{3}},
\end{aligned}
\end{equation*}

and provide tight upper bounds for $\Circled{1}, \Circled{2}$ and $\Circled{3}$. 

\bigskip 

\noindent
\textbf{Upper bound for $\Circled{1}$.}
We have
\begin{equation*}
    \begin{aligned}
    \left|\Circled{1}\right|&  \leq 2 \sum_{l=0}^{T-1} \alpha_{l+1}\left\langle\theta_{l+1}, x^* - z^l \right\rangle \leq 2 \sum_{l = 0}^{T-1} \alpha_{l+1} \left\|\theta_{l+1}\right\|_2 \left\|x^* - z^l\right\|_2
     \\& \leq  2 \sum_{l = 0}^{T-1} \alpha_{l+1} \hat{\varepsilon} \left\|\nabla f(x^{l+1})\right\|_2 C_1 R_0
      \leq  2 \sum_{l=0}^{T-1} \alpha_{l+1} \hat{\varepsilon}  \frac{C_1^2 R_0^2}{C \alpha_{l+1}} 
     \stackrel{T\leq N}{\leq}   \frac{ 2 N \hat{\varepsilon} C_1^2 R_0^2}{C}. 
    \end{aligned}
\end{equation*}
Let us choose $a$ such that
\begin{equation}\label{a_for_1}
    \frac{2N \hat{\varepsilon} C_1^2}{C} \leq \frac{1}{4} \; \Longrightarrow \; \left(\frac{9}{2a} + \frac{3}{2\sqrt{a}}\right) 2 N \hat{\varepsilon} C_1^2 \leq \frac{1}{4} \; \Longrightarrow \; a \gtrapprox  \max\left\{ N\hat{\varepsilon}, N^2\hat{\varepsilon}^2 \right\},
\end{equation}
then we get the following upper bound for $\Circled{1}$
\begin{equation*}\label{eq_upper_bund_1}
2 \sum_{l=0}^{T-1} \alpha_{l+1}\left\langle\theta_{l+1}, x^* - z^l \right\rangle  \leq \frac{1}{4}R_0^2.   
\end{equation*}

\noindent
\textbf{Upper bound for $\Circled{2}$.}
We have
\begin{equation*}
    \begin{aligned}
     \left|\Circled{2}\right| & \leq 2 \sum_{l=0}^{T-1} \alpha_{l+1}^2 \left\|\theta_{l+1} \right\|_2^2  \leq 2 \sum_{l = 0}^{T-1} \alpha_{l+1}^2 \hat{\varepsilon}^2 \left\|\nabla f(x^{l+1})\right\|_2^2 
     \\& \leq  2 \sum_{l = 0}^{T-1} \alpha_{l+1}^2 \hat{\varepsilon}^2 \cdot  \frac{C_1^2 R_0^2}{C^2 \alpha_{l+1}^2} 
      \stackrel{T\leq N}{\leq} \;\;  \frac{ 2 N \hat{\varepsilon}^2 C_1^2 R_0^2}{C^2}. 
    \end{aligned}
\end{equation*}
Let us choose $a$ such that
\begin{equation}\label{a_for_2}
    \frac{2N \hat{\varepsilon}^2 C_1^2}{C^2} \leq \frac{1}{4} \; \Longrightarrow \; \left(\frac{9}{2a} + \frac{3}{2\sqrt{a}}\right)^2 2 N \hat{\varepsilon}^2 C_1^2 \leq \frac{1}{4} \; \Longrightarrow \; a  \gtrapprox
   \max\left\{N\hat{\varepsilon}^2, \sqrt{ N}\hat{\varepsilon} \right\},
\end{equation}
then we get the following upper bound for $\Circled{2}$
\begin{equation*}\label{eq_upper_bund_2}
2 \sum_{l=0}^{T-1} \alpha_{l+1}^2 \left\|\theta_{l+1} \right\|_2^2  \leq \frac{1}{4}R_0^2.   
\end{equation*}

\noindent
\textbf{Upper bound for $\Circled{3}$.}
We have
\begin{equation*}
    \begin{aligned}
     \left|\Circled{3}\right| & \leq 2 \sum_{l=0}^{T-1} \alpha_{l+1}^2\left\langle\theta_{l+1},\nabla f(x^{l+1}) \right\rangle  \leq 2 \sum_{l = 0}^{T-1} \alpha_{l+1}^2 \left\|\theta_{l+1}\right\|_2 \left\|\nabla f(x^{l+1})\right\|_2 
     \\& \leq 2 \sum_{l = 0}^{T-1} \alpha_{l+1}^2 \hat{\varepsilon} \left\|\nabla f(x^{l+1})\right\|_2^2
      \leq  2 \sum_{l = 0}^{T-1} \alpha_{l+1}^2 \hat{\varepsilon}  \,  \frac{C_1^2 R_0^2}{C^2 \alpha_{l+1}^2} 
      \stackrel{T\leq N}{\leq} \;\;  \frac{ 2 N \hat{\varepsilon} C_1^2 R_0^2}{C^2}. 
    \end{aligned}
\end{equation*}
Let us choose $a$ such that
\begin{equation}\label{a_for_3}
    \frac{2N \hat{\varepsilon} C_1^2}{C^2} \leq \frac{1}{4} \; \Longrightarrow \; \left(\frac{9}{2a} + \frac{3}{2\sqrt{a}}\right)^2 2 N \hat{\varepsilon} C_1^2 \leq \frac{1}{4} \; \Longrightarrow \; a  \gtrapprox \max\left\{\sqrt{ N\hat{\varepsilon}}, N\hat{\varepsilon} \right\},
\end{equation}
then we get the following upper bound for $\Circled{3}$
\begin{equation*}\label{eq_upper_bund_3}
2 \sum_{l=0}^{T-1} \alpha_{l+1}^2\left\langle\theta_{l+1},\nabla f(x^{l+1}) \right\rangle  \leq \frac{1}{4}R_0^2.   
\end{equation*}
Consequently, we find that
$$
\begin{aligned}
    R_T^2 & \leq R_0^2 + \frac{3}{4}R_0^2 = \frac{7}{4} R_0^2 \leq C_1^2 R_0^2, \quad \text{where} \, \, C_1 = \sqrt{2}. 
\end{aligned}
$$
As a result, for every $N \geq 0$, we have 
\begin{equation}\label{eq_Rk_upperbound}
    R_k^2 \leq 2 R_0^2, \quad \forall \, k = 0,1, \ldots, N.
\end{equation}

Now, from \eqref{eq_62}, \eqref{eq_65} and \eqref{eq_Rk_upperbound}, we have 
\begin{equation*}
\begin{aligned}
 A_N \left(f(y^N)-f(x^*)\right) &  \leq R_0^2 + 2\sum_{k=0}^{N-1} \alpha_{k+1}\left\langle\theta_{k+1}, x^*-z^k\right\rangle 
  + 2 \sum_{k=0}^{N-1} \alpha_{k+1}^2 \left\|\theta_{k+1}\right\|_2^2 
  \\& \;\;\;\; + 2\sum_{k=0}^{N-1} \alpha_{k+1}^2 \left\langle \theta_{k+1}, \nabla f(x^{k+1})\right \rangle
 \\& \leq 2 R_0^2.
\end{aligned}
\end{equation*}
Consequently, we get from Lemma \ref{main_lemma1}, the following inequality
\begin{equation*}
    f(y^N)-f(x^*) \leq \frac{2R_0^2}{A_N} = \frac{8a L R_0^2}{N(N+3)}. 
\end{equation*}
From \eqref{a_for_1}, \eqref{a_for_2} and \eqref{a_for_3}  we find that if we choose $a$, such that
\begin{equation*}
a = O\left(\max \left\{ 1, \sqrt{N \hat{\varepsilon}}, N\hat{\varepsilon}, N^2\hat{\varepsilon}^2
\right\}\right),
\end{equation*}
with $\hat{\varepsilon} \in [0, 1]$, then we get
\begin{equation*}
    f(y^N) - f(x^*) \leq O\left(\max \left\{ \frac{LR_0^2}{N^2}, \frac{\sqrt{\hat{\varepsilon}} L R_0^2}{\sqrt{N^3}}, \frac{\hat{\varepsilon} LR_0^2}{N}, \hat{\varepsilon}^2 L R_0^2 
 \right\}\right).
\end{equation*}

\noindent
\textbf{Power policy (generalization to $p \in [1,2]$).}

\noindent

For the case when $p \in [1,2]$, the Lemma \ref{main_lemma2} is still valid with  $\alpha_{k+1}, A_{k+1} $ in \eqref{alpha_general}, but the bound for $\left\|\nabla f(x^{t+1})\right\|_2$ in each step is different, which can be shown as follows.

Firstly, for $\forall t = 1, 2, \ldots, T-1$, we have 
$$
\begin{aligned}
    \left\|\nabla f(x^{t+1})\right\|_2 & \leq  \frac{\alpha_{t+1}}{A_t}2L\widetilde{R}_t + \frac{\sqrt{L} C_1R_0}{\sqrt{A_t}}
    \\& \leq 4L C_1R_0 \frac{(t+2)^{p-1}}{(t+1)^p} + \frac{2\sqrt{a} LC_1R_0}{(t+1)^{p/2}}
    \\& \leq \frac{4aLC_1R_0}{(t+1)^{p/2}} \left( \frac{(t+2)^{p-1}}{a(t+1)^{p/2}} + \frac{1}{2\sqrt{a}(t+1)^p} \right).
\end{aligned}
$$
Because of $p \in [1,2]$, we find $p-1 \leq \frac{p}{2} \leq 1$, and 
$$
\frac{(t+2)^p}{(t+1)^{p/2}} \leq \frac{3^{p-1}}{2^{p/2}} \leq \frac{3}{\sqrt{2}}, \quad \text{and} \quad \frac{1}{(t+1)^p} \leq \frac{1}{2^p} \leq \frac{1}{2}. 
$$
Consequently, we get, for $t = 1, \ldots, T-1$, the following
\begin{equation}\label{upper_general_grad}
    \left\|\nabla f(x^{t+1})\right\|_2 \leq \frac{4aLC_1R_0}{(t+1)^{p/2}} \left(\frac{3}{\sqrt{2}a} + \frac{1}{4\sqrt{a}}\right) = \frac{4aLC_1R_0}{\widetilde{C} (t+1)^{p/2}},
\end{equation}
where $\frac{1}{\widetilde{C}} := \frac{3}{\sqrt{2}a} + \frac{1}{4\sqrt{a}} $.

Now, with \eqref{alpha_general},  \eqref{upper_general_grad}, in a similar way as previously, for the upper bound for $\Circled{1}$, we have
\begin{equation*}
    \begin{aligned}
    \left|\Circled{1}\right|& \leq 2 \sum_{l=0}^{T-1} \alpha_{l+1}\left\langle\theta_{l+1}, x^* - z^l \right\rangle 
     \leq \frac{4C_1^2 R_0^2 \hat{\varepsilon}}{\widetilde{C}} \sum_{l = 0}^{T-1}\frac{(l+2)^{p-1}}{(l+1)^{p/2}} 
    \\& \leq  \frac{4C_1^2 R_0^2 \hat{\varepsilon}}{\widetilde{C}} \sum_{l = 0}^{T} l^{\frac{p}{2} - 1}
    \leq  
    \frac{4C_1^2 R_0^2 \hat{\varepsilon}}{\widetilde{C}} T  T^{\frac{p}{2} - 1}
    \; \stackrel{T\leq N}{\leq} \;\; \frac{4C_1^2 R_0^2 \hat{\varepsilon} N^{p/2}}{\widetilde{C}}.
    \end{aligned}
\end{equation*}
Let us choose $a$ such that
\begin{equation}\label{general_a_for_1}
\begin{aligned}
    \frac{4C_1^2 \hat{\varepsilon} N^{p/2}}{\widetilde{C}} \leq \frac{1}{4} \; & \Longrightarrow \; \left(\frac{3}{\sqrt{2}a} + \frac{1}{4\sqrt{a}}\right) 4C_1^2 \hat{\varepsilon} N^{p/2} \leq \frac{1}{4} 
    \\& \Longrightarrow \; a \gtrapprox  \max\left\{\hat{\varepsilon} N^{p/2} , \hat{\varepsilon}^2 N^p  \right\},
\end{aligned}
\end{equation}
then we get $\left|\Circled{1}\right|\leq \frac{R_0^2}{4}$.

\bigskip

For the upper bound for $\Circled{2}$ (with \eqref{alpha_general},  \eqref{upper_general_grad}), we have
\begin{equation*}
    \begin{aligned}
    \left|\Circled{2}\right| &\leq 2 \sum_{l=0}^{T-1} \alpha_{l+1}^2 \left\|\theta_{l+1} \right\|_2^2  \leq 2 \sum_{l = 0}^{T-1} \alpha_{l+1}^2 \hat{\varepsilon}^2 \left\|\nabla f(x^{l+1})\right\|_2^2 
     \\& \leq  2 \sum_{l = 0}^{T-1} \frac{(l+2)^{2p-2}}{4a^2 L^2} \cdot \hat{\varepsilon}^2 \frac{16 a^2 L^2 C_1^2 R_0^2}{\widetilde{C}^2 (l+1)^p}
      \leq \frac{8\hat{\varepsilon}^2 C_1^2 R_0^2 }{\widetilde{C}^2} \sum_{l = 0}^{T} \frac{(l+2)^{2p-2}}{(l+1)^p} 
      \\& \leq  \frac{8\hat{\varepsilon}^2 C_1^2 R_0^2 }{\widetilde{C}^2} \sum_{l = 0}^{T} l^{\frac{p}{2} - 1} \; \stackrel{T\leq N}{\leq} \;\; \frac{8\hat{\varepsilon}^2 C_1^2 R_0^2 N^{p/2} }{\widetilde{C}^2}.
    \end{aligned}
\end{equation*}
Let us choose $a$ such that
\begin{equation}\label{general_a_for_2}
\begin{aligned}
     \frac{8\hat{\varepsilon}^2 C_1^2 N^{p/2} }{\widetilde{C}^2} \leq \frac{1}{4} \; & \Longrightarrow \; \left(\frac{3}{\sqrt{2}a} + \frac{1}{4\sqrt{a}}\right)^2 8 C_1^2 \hat{\varepsilon}^2 N^{p/2} \leq \frac{1}{4} 
    \\& \Longrightarrow \; a \gtrapprox  \max\left\{ \hat{\varepsilon} N^{p/4} , \hat{\varepsilon}^2 N^{p/2}  \right\},
\end{aligned}
\end{equation}
then we get $\left|\Circled{2}\right|\leq \frac{R_0^2}{4}$.

\bigskip

For the upper bound for $\Circled{3}$ (with \eqref{alpha_general},  \eqref{upper_general_grad}), we have
\begin{equation*}
    \begin{aligned}
   \left|\Circled{3}\right| & \leq 2 \sum_{l=0}^{T-1} \alpha_{l+1}^2\left\langle\theta_{l+1},\nabla f(x^{l+1}) \right\rangle 
   \leq 
   2 \sum_{l = 0}^{T-1} \alpha_{l+1}^2 \hat{\varepsilon} \left\|\nabla f(x^{l+1})\right\|_2^2
     \\& \leq \frac{8\hat{\varepsilon} C_1^2 R_0^2 }{\widetilde{C}^2} \sum_{l = 0}^{T} \frac{(l+2)^{2p-2}}{(l+1)^p}  \; \stackrel{T\leq N}{\leq} \;\; \frac{8\hat{\varepsilon} C_1^2 R_0^2 N^{p/2} }{\widetilde{C}^2}.
    \end{aligned}
\end{equation*}
Let us choose $a$ such that
\begin{equation}\label{general_a_for_3}
\begin{aligned}
     \frac{8\hat{\varepsilon} C_1^2 N^{p/2} }{\widetilde{C}^2} \leq \frac{1}{4} \; & \Longrightarrow \; \left(\frac{3}{\sqrt{2}a} + \frac{1}{4\sqrt{a}}\right)^2 8 C_1^2 \hat{\varepsilon} N^{p/2} \leq \frac{1}{4} 
    \\& \Longrightarrow \; a \gtrapprox  \max\left\{\sqrt{\hat{\varepsilon}} N^{p/4} ,  \hat{\varepsilon} N^{p/2}\right\},
\end{aligned}
\end{equation}
then we get $\left|\Circled{3}\right|\leq \frac{R_0^2}{4}$.

\bigskip

Therefore, for \texttt{ISTM}, we have 
\begin{equation}
f(y^N)-f(x^*) \leq \frac{2R_0^2}{A_N} \leq \frac{8a L R_0^2}{(N+1)^p},
\end{equation}
and from \eqref{general_a_for_1}, \eqref{general_a_for_2} and \eqref{general_a_for_3}, we find that by choosing $a$, such that
\begin{equation}\label{a_general_111}
a = O\left(\max \left\{ 1,\sqrt{ \hat{\varepsilon}} N^{p/4 },  \hat{\varepsilon} N^{p/2}, \hat{\varepsilon}^2 N^p
\right\}\right),
\end{equation}
with $\hat{\varepsilon} \in [0, 1]$, and $p \in [1,2]$, we get
\begin{equation}\label{last_estim_general}
f(y^N) - f(x^*) \leq O \left(\max \left\{ \frac{LR_0^2}{N^p}, \frac{\sqrt{\hat{\varepsilon}} L R_0^2}{N^{3p/4}}, \frac{\hat{\varepsilon} LR_0^2}{N^{p/2}}, \hat{\varepsilon}^2 L R_0^2
 \right\}\right).
\end{equation}


\subsection{Additional details about PEP}\label{sec:pep_theory}
In order to find tight convergence rate for \texttt{ISTM} Algorithm \ref{alg_STM_relative}, we consider the following optimization problem with fixed $N, \hat{\varepsilon}, a$
\begin{eqnarray} 
    \max \limits_{n, f, x^0} && f(x^N) - f(x^*) \label{eq:max_problem}  \\
    && f:\R^n \rightarrow \R \text{ is  $L$ -smooth and convex}, \notag \\
    &&  0 \in \nabla f(x^*), \notag \\
    && \|x^0 - x^*\|^2_2 \leq R^2,  \notag\\
    && \|\widetilde{\nabla} f(x^k) - \nabla f(x^k)\|^2_2 \leq \hat{\varepsilon}^2 \|\nabla f(x^k)\|^2_2, \quad k = \overline{0,  N-1},  \notag\\
    && x^{k+1}, y^{k+1}, z^{k+1} = \text{ISTMstep}(x^{k}, y^k, z^k, \widetilde{\nabla} f(x^k)), \quad k = \overline{0,  N-1}.  \notag
\end{eqnarray}
In this problem, we produce maximization over an infinitely dimensional domain of $L$ -smooth and convex functions. For numerical experiments problem \eqref{eq:max_problem} can be equivalently reformulated in finite-dimensional convex optimization task. This reformulation baseline was introduced in \cite{drori2014performance} and further developed for convex and $L$-smooth optimization in \cite{taylor2017smooth}. For the problems with relative inaccuracy in the gradients, this problem is also studied in \cite{de2017worst}. We will consider maximization over the set of triplets that represent points and function's values and gradients. We denote  this index set $I = \{*, 0, 1, \dots, N\}$. The equivalent problem takes the following form

\begin{eqnarray}
    \max\limits_{\begin{subarray} k n,x^*, f^*, g^* \\  \{x^i, y^i, z^i\}_{i=0}^N \\ \{f^i, g^i, \widetilde{g}^i\}_{i=0}^N \end{subarray} } && f^N - f^*  \notag \\
    && f:\R^n \rightarrow \R \text{ is  $L$ -smooth and convex}, \label{eq:L_and_conv_cond}\\
    && f^k = f(x^k), \quad g^k \in \nabla f (x^k), \quad k = *,0, 1 \dots, N, \label{eq:equal_cond}\\
    && g^* = 0,  \notag \\
    && \|x^0 - x^*\|^2_2 \leq R^2,  \notag\\
    && \|\widetilde{g}^k - g^k\|^2_2 \leq \hat{\varepsilon}^2 \|g^k\|^2_2, \quad k = \overline{0,  N-1},  \notag\\
    && x^{k+1}, y^{k+1}, z^{k+1} = \text{ISTMstep}(x^{k}, y^k, z^k, \widetilde{g}^k), \hspace{0.15cm}  k = \overline{0,  N-1}.  \notag
\end{eqnarray}

Harsh conditions \eqref{eq:L_and_conv_cond}-\eqref{eq:equal_cond} can be replaced by finite set of conditions on points $\{x^i\}_{i \in I}$, function values $\{f^i\}_{i \in I}$ and  gradients $\{g^i\}_{i \in I}$. They allow interpolating the unique $L$-smooth and convex function $f$.
\begin{theorem} \label{theo:interpolation}\cite{taylor2017smooth}
    For the set $\{x^i, f^i, g^i\}_{i \in I}$ there exists convex (Assumption~\ref{as:cvx}) and $L$-smooth (Assumption~\ref{as:Lsmooth}) function $f$ such that for all $i \in I$ we have $g^i \in \partial f(x^i)$ and $f^i = f(x^i)$ if and only if the following condition holds for every pair of indices $i \in I$ and $j \in I$
    \begin{equation}\label{eq:interpolation_cond}
        f^i - f^j - (g^j)^\top (x_i - x_j) \geq \frac{\|g^i - g^j\|^2}{2}.
    \end{equation}
\end{theorem}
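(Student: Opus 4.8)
The statement is an equivalence, and the two directions are of very different difficulty. The \emph{necessity} (``only if'') is immediate from the Proposition already proved in the excerpt: if a convex, $L$-smooth $f$ interpolates the data with $g^i = \nabla f(x^i)$ and $f^i = f(x^i)$, then \eqref{eq_8} applied at $x = x^i$, $y = x^j$ reads
$$
\|g^i - g^j\|^2 = \left\|\nabla f(x^i) - \nabla f(x^j)\right\|_2^2 \le 2L\big(f^i - f^j - \langle g^j, x^i - x^j\rangle\big),
$$
which is exactly \eqref{eq:interpolation_cond} after the harmless normalization $L = 1$ (replace $f, f^i, g^i$ by $f/L, f^i/L, g^i/L$; I carry this normalization throughout so the constant matches the stated inequality). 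Hence all the work lies in \emph{sufficiency}: given finite data obeying \eqref{eq:interpolation_cond}, I must construct an interpolating function.

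My plan for sufficiency is to pass to Fenchel conjugates, where $L$-smoothness of a convex function is dual to $\tfrac1L$-strong convexity of its conjugate and the interpolation problem becomes elementary. For a candidate $f$ with $g^i = \nabla f(x^i)$, the Fenchel--Young equality forces $x^i \in \partial f^*(g^i)$ and $f^*(g^i) = \langle g^i, x^i\rangle - f^i$. This motivates the \emph{conjugate data}
$$
y^i := g^i, \qquad h^i := \langle g^i, x^i\rangle - f^i, \qquad s^i := x^i, \qquad i \in I,
$$
and the intermediate goal of building a closed, $1$-strongly convex $h$ with $h(y^i) = h^i$ and $s^i \in \partial h(y^i)$. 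The desired function is then recovered as $f := h^*$, which is automatically convex and $1$-smooth (strong convexity of $h$ makes $h^*$ finite, differentiable and with $1$-Lipschitz gradient), and a Fenchel--Young computation returns $\nabla f(x^i) = g^i$ and $f(x^i) = \langle x^i, g^i\rangle - h^i = f^i$.

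The reduction hinges on the claim that \eqref{eq:interpolation_cond} for $\{(x^i, f^i, g^i)\}$ is \emph{equivalent} to the strong-convexity interpolation inequalities $h^i - h^j - \langle s^j, y^i - y^j\rangle \ge \tfrac12\|y^i - y^j\|^2$ for the conjugate data. Substituting the definitions, the left-hand side collapses to $f^j - f^i - \langle g^i, x^j - x^i\rangle$ and the right-hand side to $\tfrac12\|g^i - g^j\|^2$, so this is precisely \eqref{eq:interpolation_cond} with $i$ and $j$ interchanged; since $I$ is symmetric, the two systems coincide. It then remains to solve the strongly convex interpolation problem, which I handle by splitting off the quadratic: writing $h(y) = \tfrac12\|y\|^2 + \hat h(y)$ with $\hat h^i := h^i - \tfrac12\|y^i\|^2$ and $\hat s^i := s^i - y^i$, a one-line computation turns the strong-convexity inequalities into the plain convex-interpolation inequalities $\hat h^i \ge \hat h^j + \langle \hat s^j, y^i - y^j\rangle$. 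These are exactly what makes the explicit maximum-of-affine function
$$
\hat h(y) := \max_{j \in I}\big\{ \hat h^j + \langle \hat s^j, y - y^j\rangle \big\}
$$
convex, finite, and interpolating $(\hat h^i,\hat s^i)$ at every $y^i$, and adding back $\tfrac12\|y\|^2$ yields the required $1$-strongly convex $h$.

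I expect the main obstacle to be the convex-analytic bookkeeping around conjugation rather than any single computation. One must justify that $h$ (a finite max of affine pieces plus a quadratic) is closed, proper and $1$-strongly convex; that its conjugate $f = h^*$ is therefore finite everywhere and $1$-smooth; that biconjugation $h = f^*$ together with Fenchel--Young returns the prescribed values and gradients at the finitely many data points; and that the max-of-affine function attains its maximum in the $i$-th term at $y^i$ (giving value interpolation) while each affine piece lies below the max (giving subgradient interpolation). Each of these steps is routine once the conjugate correspondence is in place, so the crux of the argument is really the equivalence of the two interpolation systems established above.
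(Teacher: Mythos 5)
Your proof is correct. Note that the paper itself gives no proof of this theorem: it is quoted verbatim (up to the normalization $L=1$, which you rightly flag) from the cited reference of Taylor, Hendrickx, and Glineur, so the only meaningful comparison is with that original source. Your argument — necessity from the cocoercivity-type inequality \eqref{eq_8}, sufficiency by passing to the conjugate data $(g^i,\langle g^i,x^i\rangle-f^i,x^i)$, reducing $1$-strong-convexity interpolation to plain convex interpolation by subtracting $\tfrac12\|\cdot\|_2^2$, solving the latter with a maximum of affine minorants, and recovering $f=h^*$ via Fenchel--Young — is essentially the same duality route used there, and all the key computations you sketch (the index swap $i\leftrightarrow j$ under conjugation, the cancellation of quadratic terms, the attainment of the max at $y^i$, and the Lipschitz-gradient/strong-convexity duality for $h^*$) check out.
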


Using Theorem \ref{theo:interpolation}, we reduce conditions \eqref{eq:L_and_conv_cond}-\eqref{eq:equal_cond} to $(N+2) \times (N+1)$ quadratic \eqref{eq:interpolation_cond} type inequalities. Thus, the following problem is equivalent reformulation of problem \eqref{eq:max_problem}  
\begin{eqnarray}
    \max \limits_{\begin{subarray}k n, x^*, f^*, g^* \\\{x^i, y^i, z^i\}_{i=0}^N \\\{f^i, g^i, \widetilde{g}^i\}_{i=0}^N \end{subarray}} && f^N - f^*  \notag \\
    &&  f^i - f^j - (g^j)^\top (x_i - x_j) \geq \frac{\|g^i - g^j\|_2^2}{2},\hspace{0.15cm}  i,j = *,0,  \dots, N, \label{eq:inter_cond_1}\\
    && g^* = 0, \label{eq:inter_cond_2}\\
    && \|x^0 - x^*\|^2_2 \leq R^2,\label{eq:init_cond}\\
    && \|\widetilde{g}^k - g^k\|^2_2 \leq \hat{\varepsilon}^2 \|g^k\|^2_2, \quad k = \overline{0,  N-1}, \label{eq:noise_cond}\\
    && x^{k+1}, y^{k+1}, z^{k+1} = \text{ISTMstep}(x^{k}, y^k, z^k, \widetilde{g}^k), \hspace{0.15cm}  k = \overline{0,  N-1}. \notag
\end{eqnarray}
Points $\{x^i\}_{i=1}^N, \{y^i\}_{i=0}^N, \{z^i\}_{i=0}^N$ are linear combinations of $x_0, \{\widetilde{g}^i\}_{i=0}^N$, consequently, we maximize only $2 +2*(N+1)$ namely vectors $x^0, x^*, \{g^i\}_{i \in I},  \{\widetilde{g}^i\}_{i \in I}$ and $N+1$ scalars $\{f^i\}_{i \in I}$. Moreover, the above problem is linear w.r.t. the inner products of all possible pairs of vectors and scalars. Firstly, we define Gram matrix $G := V^\top V \in \R^{ 2(N+2) \times 2(N+2)}$, where $V = \left( x^0, x^*, \{g^i\}_{i \in I} , \{\widetilde{g}^i\}_{i \in I}\right) \in \R^{d \times 2(N+2) }$ and $\mathbf{f} = \left(f_*, f_0, \dots, f_N \right) \in \R^{N+2}$. With these notations, we can rewrite the problem in SDP form

\begin{eqnarray*}
    \max \limits_{n, G, \mathbf{f} \in \R^{N+2} } && f^N - f^* \\
    &&  \Tr(M_{i,j}G) \leq f^i - f^j, \quad i,j = *,0, 1 \dots, N\\
    && \Tr(M_* G) = 0 ,\\
    && \Tr(M_R G) \leq R^2,\\
    && \Tr(N_i G) \leq 0 , \quad i = 0,1, \dots, N - 1, \\
    && G \succeq 0,
\end{eqnarray*}
where $M_{i,j}$ represents interpolation conditions \eqref{eq:inter_cond_1}, $M_*$  --- optimal condition \eqref{eq:inter_cond_2}, $M_R$ --- initial point condition \eqref{eq:init_cond} and $N_i$ --- relative noise conditions \eqref{eq:noise_cond}. Due to the cumbersome nature of the matrices, we will not write out their explicit form, however, the algorithm for their construction can be found in the work \cite{taylor2017smooth}. Note, that in the framework PEPit \cite{goujaud2022pepit} that we used for numerical experiments, these matrices and the conversion to the SDP form are done automatically.

\subsection{\texttt{AIM} missing proofs}\label{sect_missing_proofs_AIM}

\noindent
\textbf{Proof of Lemma \ref{lemma_upper_bound_Ak_fk}}
\begin{proof}
\nikita{We prove this Lemma by induction.} For $k = 0$, we have
\begin{equation*}
\begin{aligned}
  \Psi_0^* & \;\;\;\; = \min_{x \in Q} \left\{ \Psi_0 (x) = \frac{1}{2} \|x - x^0\|_2^2 + \alpha_0 f(x^0) + \alpha_0 \left\langle \widetilde{\nabla} f(x^0), x - x^0 \right\rangle\right\}
  \\& \;\;\; \stackrel{\eqref{y0_adaptive_alg}}{\geq} \alpha_0 \left( \frac{1}{2 \alpha_0} \|y^0 - x^0\|_2^2 + f(x^0) + \left\langle \widetilde{\nabla} f(x^0), y^0 - x^0 \right\rangle \right)
  \\& \;\;\; \; =  \alpha_0 \left(f(x^0) + \left\langle \widetilde{\nabla} f(x^0), y^0 - x^0 \right\rangle + \frac{2^{i_0}L_s}{2} \|y^0 - x^0\|_2^2  \right)
  \\& \;\;\;\; \geq  \alpha_0  f(y^0) - \alpha_0 \delta_0
 = A_0 f(y^0) - \alpha_0 \delta_0 = A_0 f(y^0) - M_0.
\end{aligned}
\end{equation*}
Assume that \eqref{ineq_upper_bound_Ak_fk} is valid for certain $k-1 \geq 0$, i.e.
\begin{equation}\label{induction_k_1}
    A_{k-1} f(y^{k-1}) - M_{k-1} \leq \Psi_{k-1}^*,
\end{equation}
and let us prove that it holds for $k$. \nikita{Firstly, we write down $\Psi_k^*$ based on Algorithm \ref{adaptive_alg} steps, i.e.}

\begin{equation*}
\begin{aligned}
\Psi_k^* &= \min_{x \in Q} \Psi_k(x^k)
\stackrel{\eqref{zk_adaptive_alg}}{\geq}\Psi_k(z^k)
\stackrel{\eqref{psi_star}}{=} \Psi_{k-1}(z^k) + \alpha_k \left( f(x^k) + \left\langle \widetilde{\nabla} f(x^k), z^k - x^k \right\rangle \right)
\\& 
\stackrel{\eqref{zk_adaptive_alg}}{\geq} \Psi_{k-1}(z^{k-1}) + \frac{1}{2} \left\|z^k - z^{k-1}\right\|_2^2  + \alpha_k \left( f(x^k) + \left\langle \widetilde{\nabla} f(x^k), z^k - x^k \right\rangle \right)
\\& \stackrel{\eqref{psi_star}} {\geq}
\Psi_{k-1}^* + \frac{1}{2} \left\|z^k - z^{k-1}\right\|_2^2  + \alpha_k \left( f(x^k) + \left\langle \widetilde{\nabla} f(x^k), z^k - x^k \right\rangle \right)
\\& \stackrel{\eqref{induction_k_1}} {\geq}
A_{k-1} f(y^{k-1}) - M_{k-1} + \frac{1}{2} \left\|z^k - z^{k-1}\right\|_2^2  
\\& \;\;\;\; + \alpha_k \left( f(x^k) + \left\langle \widetilde{\nabla} f(x^k), z^k - x^k \right\rangle \right)
\\& \stackrel{\eqref{Ak_adaptive_alg}}{\geq} 
(A_k - \alpha_k) f(y^{k-1}) - M_{k-1} + \frac{1}{2} \left\|z^k - z^{k-1}\right\|_2^2  
\\& \;\;\;\; + \alpha_k \left( f(x^k) + \left\langle \widetilde{\nabla} f(x_k), z^k - x^k \right\rangle \right)
\\& = (A_k - B_k) f(y^{k-1}) - M_{k-1} + \frac{1}{2} \left\|z^k - z^{k-1}\right\|_2^2  + (B_k -\alpha_k) f(y^{k-1}) 
\\& \;\;\;\; + \alpha_k \left( f(x^k) + \left\langle \widetilde{\nabla} f(x^k), z^k - x^k \right\rangle \right)
\\& \geq (A_k - B_k) f(y^{k-1}) - M_{k-1} + \frac{1}{2} \left\|z^k - z^{k-1}\right\|_2^2 
\\& \;\;\;\;+ (B_k - \alpha_k) \left( f(x^k) + \left\langle \widetilde{\nabla} f(x^k), y^{k-1} - x^k \right\rangle \right)
\\& \;\;\;\; + \alpha_k \left( f(x^k) + \left\langle \widetilde{\nabla} f(x^k), z^k - x^k \right\rangle \right)
\\& = (A_k - B_k ) f(y^{k-1}) - M_{k-1} + \frac{1}{2} \left\|z^k - z^{k-1}\right\|_2^2  + B_k f(x^k) 
\\& \;\;\;\; + \left\langle \widetilde{\nabla} f(x^k), (B_k - \alpha_k)(y^{k-1} - x^k)+ \alpha_k(z^k - x^k) \right\rangle.
\end{aligned} 
\end{equation*}
From \eqref{xk_adaptive_alg} we get
\begin{equation}
 (B_k - \alpha_k)(y^{k-1} - x^k)+ \alpha_k(z^k - x^k)  = \alpha_k (z^k - z^{k-1}).
\end{equation}
\nikita{Thus, the next inequality holds true}

\begin{equation*}
\begin{aligned}
    \Psi_k^* &\geq (A_k - B_k ) f(y^{k-1}) - M_{k-1} + B_k f(x^k) + \alpha_k \left\langle \widetilde{\nabla} f(x^k), z^k - z^{k-1} \right\rangle
    \\& \;\;\;\; + \frac{1}{2} \left\|z^k - z^{k-1}\right\|_2^2
    \\& = (A_k - B_k ) f(y^{k-1}) - M_{k-1} + B_k \Big( f(x^k) + \frac{\alpha_k}{B_k} \left\langle \widetilde{\nabla} f(x^k), z^k - z^{k-1} \right\rangle
    \\& \;\;\;\; + \frac{1}{2 B_k} \left\|z^k - z^{k-1}\right\|_2^2  \Big).
\end{aligned}
\end{equation*}
Because of $B_k = 2^{i_k} L_{k-1} \alpha_k^2$, we have $\frac{1}{B_k} =2^{i_k} L_{k-1} \frac{\alpha_k^2}{B_k^2} $ and therefore
\begin{equation*}
\begin{aligned}
\Psi_k^* & \geq    (A_k - B_k ) f(y^{k-1}) -  M_{k-1}  + B_k \Big( f(x^k) + \frac{\alpha_k}{B_k} \left\langle \widetilde{\nabla} f(x^k), z^k - z^{k-1} \right\rangle
\\& \;\;\;\; + \frac{2^{i_k} L_{k-1}\alpha_k^2}{2 B_k^2} \left\|z^k - z^{k-1}\right\|_2^2  \Big).
\end{aligned}
\end{equation*}
\nikita{Moreover, from \eqref{xk_adaptive_alg} and \eqref{wk_adaptive_alg}}, we have
$$
\frac{\alpha_k}{B_k} \left(z^k - z^{k-1}\right) = w^k - x^k.
$$
Thus, we get
\begin{equation*}
\begin{aligned}
    \Psi_k^* & \geq  (A_k - B_k ) f(y^{k-1}) -  M_{k-1}  + B_k \Big( f(x^k) + \frac{\alpha_k}{B_k} \left\langle \widetilde{\nabla} f(x^k), w^k - z^{k-1} \right\rangle
    \\& \;\;\;\; + \frac{2^{i_k} L_{k-1}}{2} \|w^k - x^k\|_2^2  \Big)
    \\& \stackrel{\eqref{criter_out_iter}}{\geq}  (A_k - B_k ) f(y^{k-1}) -  M_{k-1}  + B_k\left(f(w^k) - \delta_k\right)
    \\& \stackrel{\eqref{yk_adaptive_alg}}{\geq} A_k f(y^k) - M_{k-1} - B_k \delta_k = A_k f(y^k) - M_k.
\end{aligned}
\end{equation*}
\end{proof}

\bigskip

\noindent
\textbf{Proof of Corollary \ref{corr_rate_adaptive}}
\begin{proof}
Let $h(x) : = \frac{1}{2}\|x- x^0\|_2^2$. From Lemma \ref{lemma_upper_bound_Ak_fk}, we have
\begin{equation*}
\begin{aligned}
    A_k f(y^k) - M_k & \leq \Psi_k^* =   \min _{x \in Q}\left\{h(x) + \sum_{j=0}^k \alpha_j\left(f (x^j) + \left\langle \widetilde{\nabla} f(x^j), x - x^j \right\rangle\right) \right\} 
    \\& \leq  h(x^*) + \sum_{j=0}^k \alpha_j\left(f (x^j) + \left\langle \widetilde{\nabla} f(x^j), x^* - x^j \right\rangle\right)
    \\& \leq h(x^*) + \sum_{j=0}^k \alpha_j f(x^*) = h(x^*) + A_k f(x^*).
\end{aligned}
\end{equation*}
i.e.
$$
A_k f(y^k) - M_k \leq h(x^*) + A_k f(x^*),
$$
and
$$
A_k \left(f(y^k) - f(x^*)\right) \leq h(x^*) + M_k = h(x^*) + \sum_{j = 0}^{k} B_j \delta_j.
$$
Therefore,
$$
f(y^k) - f(x^*) \leq \frac{h(x^*)}{A_k} + \frac{1}{A_k} \sum_{j = 0}^{k} B_j \delta_j, 
$$
which is the desired result.
\end{proof}

\bigskip

\noindent
\textbf{Proof of Theorem \ref{the:rate_adaptive_alg}}
\begin{proof}
At the first, we mention that the sequences $\{\alpha_k\}_{k \geq 0}$ and  $ \{B_k\}_{k \geq 0}$ satisfy (see Lemma 4.1 in \cite{kamzolov2021universal})
$$
0 < \alpha_{k+1} \leq B_{k+1} \leq A_{k+1}.
$$
Now, let us find a lower bound for $A_k$ and an upper bound for $B_{k}$.

For $A_k$: we have $L_k \leq 2 L, \; \forall k \geq 0$ \nikita{and as a consequence}
$$
\alpha_k = \frac{1}{L_k} \left(\frac{k + 2p}{2p} \right)^{p-1} \geq \frac{1}{2L} \left(\frac{k + 2p}{2p} \right)^{p-1},
$$
with
$$
A_k = \sum_{m = 0}^{k} \alpha_m \geq \frac{1}{2L} \sum_{m = 0}^{k} \left(\frac{m+2p}{2p}\right)^{p-1}.
$$
\nikita{However, one can make an integral bound}
$$
\sum_{m = 0}^{k} \left(\frac{m+2p}{2p}\right)^{p-1} \geq \int_{0}^{k} \left(\frac{x+2p}{2p}\right)^{p-1} dx + \alpha_0 \geq 2 \left(\frac{k+2p}{2p}\right)^p.
$$
Therefore, since $p \in [1,2]$, we get
\begin{equation}\label{upper_bound_Ak}
A_k \geq \frac{1}{L}  \left(\frac{k+2p}{2p}\right)^p \geq \frac{1}{L}  \left(\frac{k+2}{4}\right)^p.
\end{equation}

For $\sum_{i=0}^{k}B_i \delta_i$,  \nikita{we use formula of $B_i$}
$$
B_i = \alpha_i^2 L_i =\left(\frac{i + 2p}{2p}\right)^{p-1} \alpha_i,
$$
\nikita{in order to make a bound} 
\begin{equation}\label{lower_bound_Bk}
    \begin{aligned}
\sum_{i=0}^{k}B_i \delta_i & = \sum_{i=0}^{k} \left(\frac{i + 2p}{2p}\right)^{p-1} \alpha_i \delta_i
      \leq \left(\frac{k + 2p}{2p}\right)^{p-1} \left(\max_{0 \leq i \leq k} \delta_i\right)   \sum_{i=0}^{k} \alpha_i 
    \\&  =  \left(\frac{k + 2p} {2p}\right)^{p-1}  \left(\max_{0 \leq i \leq k} \delta_i\right) A_k. 
\end{aligned}
\end{equation}
From \eqref{upper_bound_Ak}, \eqref{lower_bound_Bk} and Corollary \ref{corr_rate_adaptive}, we can obtain convergence rate
$$
\begin{aligned}
    f(y^k) - f(x^*) & \leq \frac{L h(x^*)}{\left(\frac{k+2}{4}\right)^p} + \frac{1}{A_k} \left(\frac{k + 2p} {2p}\right)^{p-1}  \left(\max_{0 \leq i \leq k} \delta_i\right) A_k
    \\& \leq \frac{16 L h(x^*)}{(k+2)^p} + 2\left(\max_{0 \leq i \leq k} \delta_i\right) k^{p-1}.
\end{aligned}
$$

Assume that for the distance to the solution $x^*$ from the starting point $x^0$, we have $h(x^*) = \frac{1}{2}\|x^* - x^0\|_2^2 \leq R_0^2$, then we get the following desired result 
\begin{equation*}
\begin{aligned}
   f(y^k) - f(x^*) & \leq \frac{8 L R_0^2}{(k+2)^p} + 2\left(\max_{0 \leq i \leq k} \delta_i\right) k^{p-1}
   \\& \leq \frac{ 4  R_0^2  }{(k+2)^p}\left(\max_{0 \leq i \leq k} L_i\right) + 2\left(\max_{0 \leq i \leq k} \delta_i\right) k^{p-1}.  
\end{aligned} 
\end{equation*}
\end{proof}

\subsection{Additional PEP experiments}
In this subsection, we provide additional PEP experiments for \texttt{ISTM} in the best case when $p=2$ or standard \texttt{STM}.

\bigskip
\noindent
\textbf{Extra Experiment 1.}

 The design of the experiment is similar to Experiment 1: for each fixed $a, \varepsilon$, we look for the moment $N_{\text{pep}}(a, \hat{\varepsilon})$ at which the sequence of $\{\tau_i\}_{i=0}^{N_{\max}}$ stops to decrease. We compare numerically calculated map $N_{\text{pep}} (a, \hat{\varepsilon})$ with theoretical map $a = C^2 N^2 \hat{\varepsilon} ^2$ or $N_{\text{theory}}(a, \hat{\varepsilon}) = \frac{C \sqrt{a}}{\hat{\varepsilon}}$ that shows after which iteration \texttt{STM} starts to oscillate in theory. Fig. \ref{fig:opt_a_check} shows theoretical and numerical graphs in $a$ and $N$ axes for different levels of relative noises $\hat{\varepsilon}$ with smoothness constant $L = 1$ and initial radius $R = 1$.
 
\begin{figure}[htp]
\centering
{\includegraphics[width=7cm]{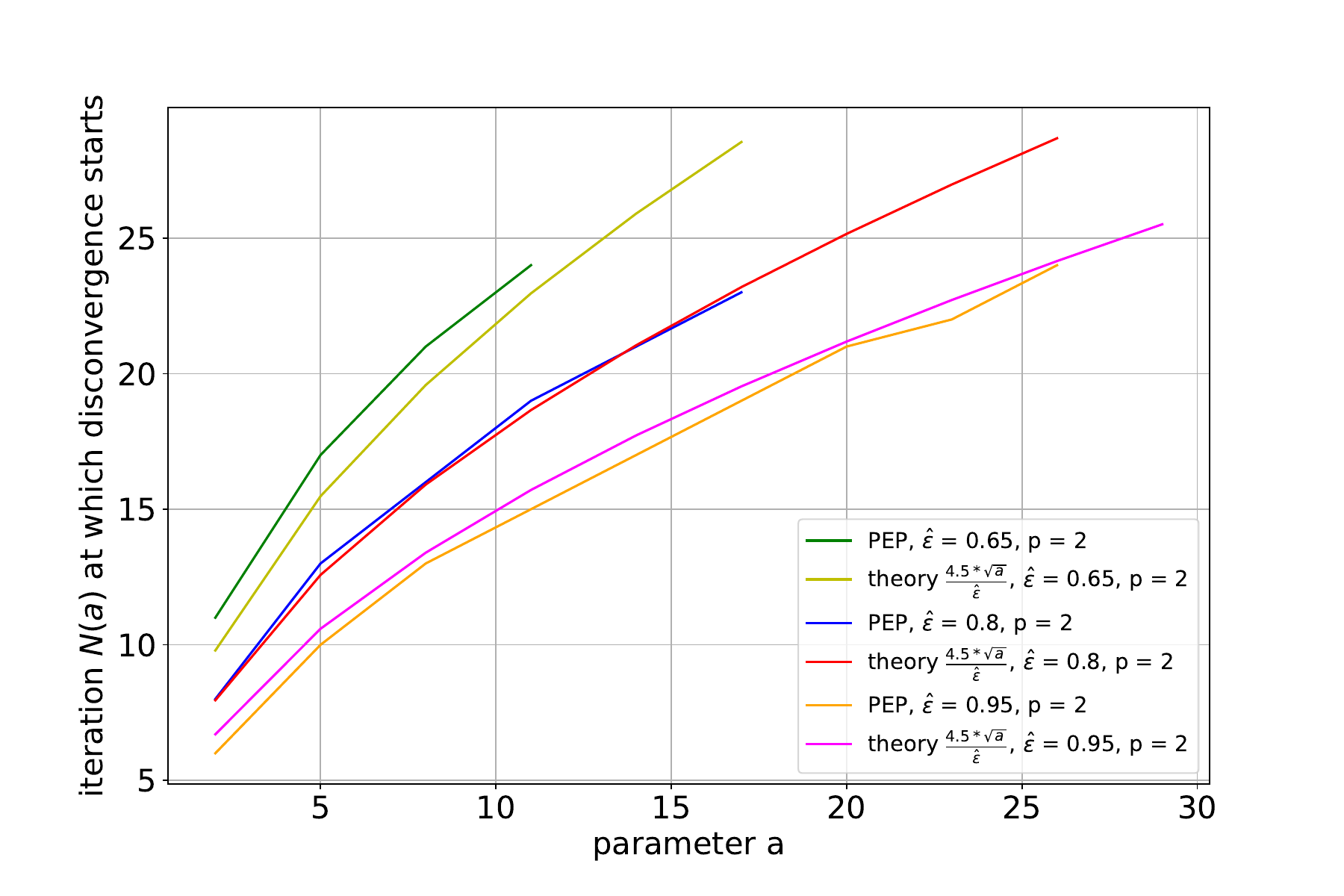} }
\caption{Comparison of theoretical and PEP calculated iteration $N$ after which \texttt{STM} with parameter $a$ starts to disconverge for different $\hat{\varepsilon}$ and $L = 1, R = 1.$ }
\label{fig:opt_a_check}
\end{figure}
As one can see PEP and theory are in excellent agreement and the constant $C \sim 4.5$ remains the same from Experiment 1.

\bigskip
\noindent
\textbf{Extra Experiment 2.}

We check the convergence formula \eqref{eq:conv_rate_alg1_proper_a} from Theorem \ref{theo:coveregence_alg1} for $p=2$,
$$
f(y^N) - f(x^*) \leq O\left(\max \left\{ \frac{LR_0^2}{N^2}, \frac{\sqrt{\hat{\varepsilon}} L R_0^2}{\sqrt{N^3}}, \frac{\hat{\varepsilon} LR_0^2}{N}, \hat{\varepsilon}^2 L R_0^2 
 \right\}\right)
 $$
 namely, the level at which the algorithm begins to oscillate after a large number of iterations $N$ with fixed $\hat{\varepsilon}$. To do this, we consider solving maximization problem \eqref{eq:max_problem} $ \tau^N = \max_f f(y^N) - f(x^*)$ setting the parameter $a$ of \texttt{STM}  to $ \max \left\{ 1, \sqrt{N \hat{\varepsilon}}, N\hat{\varepsilon}, N^2\hat{\varepsilon}^2 
 \right\}$.  The theory gives an estimate $LR_0^2\hat{\varepsilon}^2$.
Fig. \ref{fig:conv_with_opt_a} shows the dependence of the maximum difference $\tau^N$ on the number of iterations $N$ for different levels of relative noise $\hat{\varepsilon}$ with smoothness constant $L = 1$ and initial radius $R = 1$.

\begin{figure}[htp]
\centering
{\includegraphics[width=5.5cm]{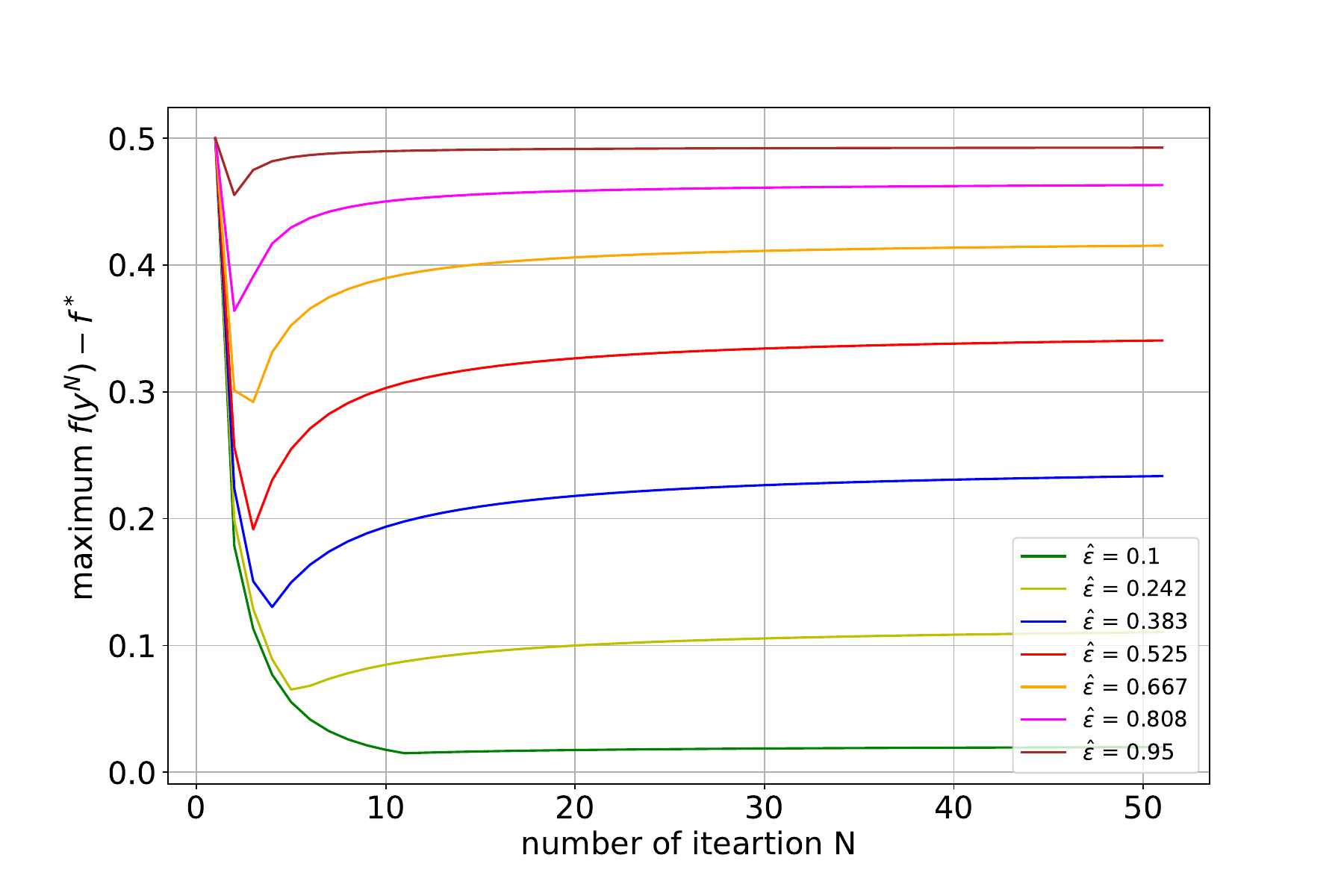} }
{\includegraphics[width=5.5cm]{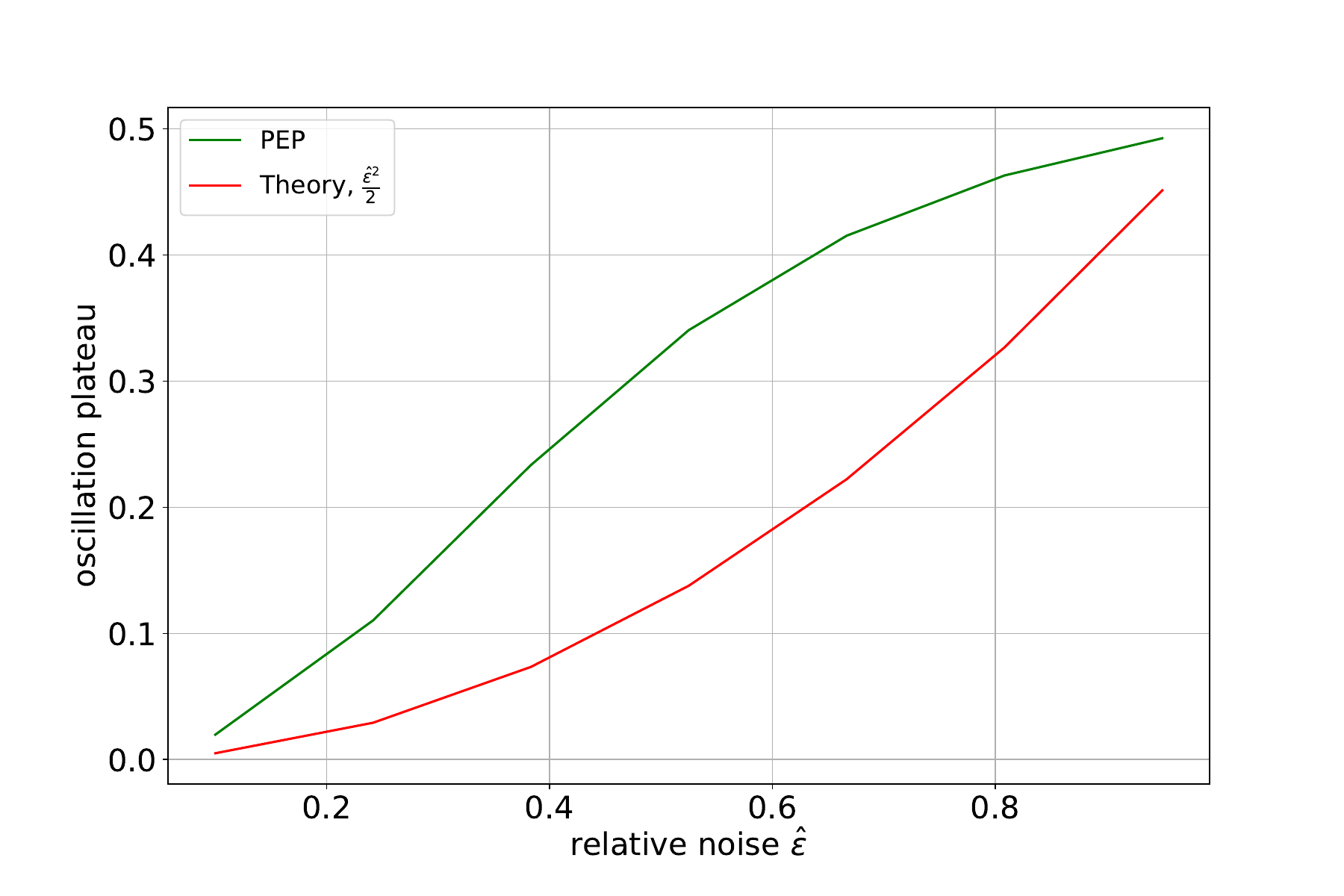} }
\caption{\textbf{Left:} Worst-case results of \texttt{STM} depending on iteration $N$ with optimal $a$ from Theorem \ref{theo:coveregence_alg1}.\textbf{Right:} Theoretical and PEP calculated plateau of oscillation for different relative noise $\hat{\varepsilon}$. }
\label{fig:conv_with_opt_a}
\end{figure}
We note that in the results of the PEP and the theory, oscillation of the algorithm around a certain plateau is visible, depending on the relative noise $\hat{\varepsilon}$. However, theoretical estimates do not quite match in order of magnitude to numerical ones.

\subsection{Additional  experiments for Algorithms \ref{alg_STM_relative}, \ref{adaptive_alg} and \ref{adaptive_alg_L_p}.}\label{sec_additional_experiments}
In this subsection, we show additional experiments connected with Algorithms \ref{alg_STM_relative} (\texttt{ISTM}), \ref{adaptive_alg} (\texttt{AIM}) and \ref{adaptive_alg_L_p} (\texttt{AIM} with variable $p$), in order to show their efficiency.  We run these algorithms with the same setting as described in Section \ref{subsec_major_experim}. 

The results of the work of Algorithm \texttt{ISTM}, represented in Fig. \ref{figs1_ISTM_a_2_10_20_30}. These results demonstrate the difference $f(y^k) - f(x^*)$ at each iteration of the algorithm, for different values of the parameter $a$. 

\begin{figure}[htp]
\centering
{\includegraphics[width=5.5cm]{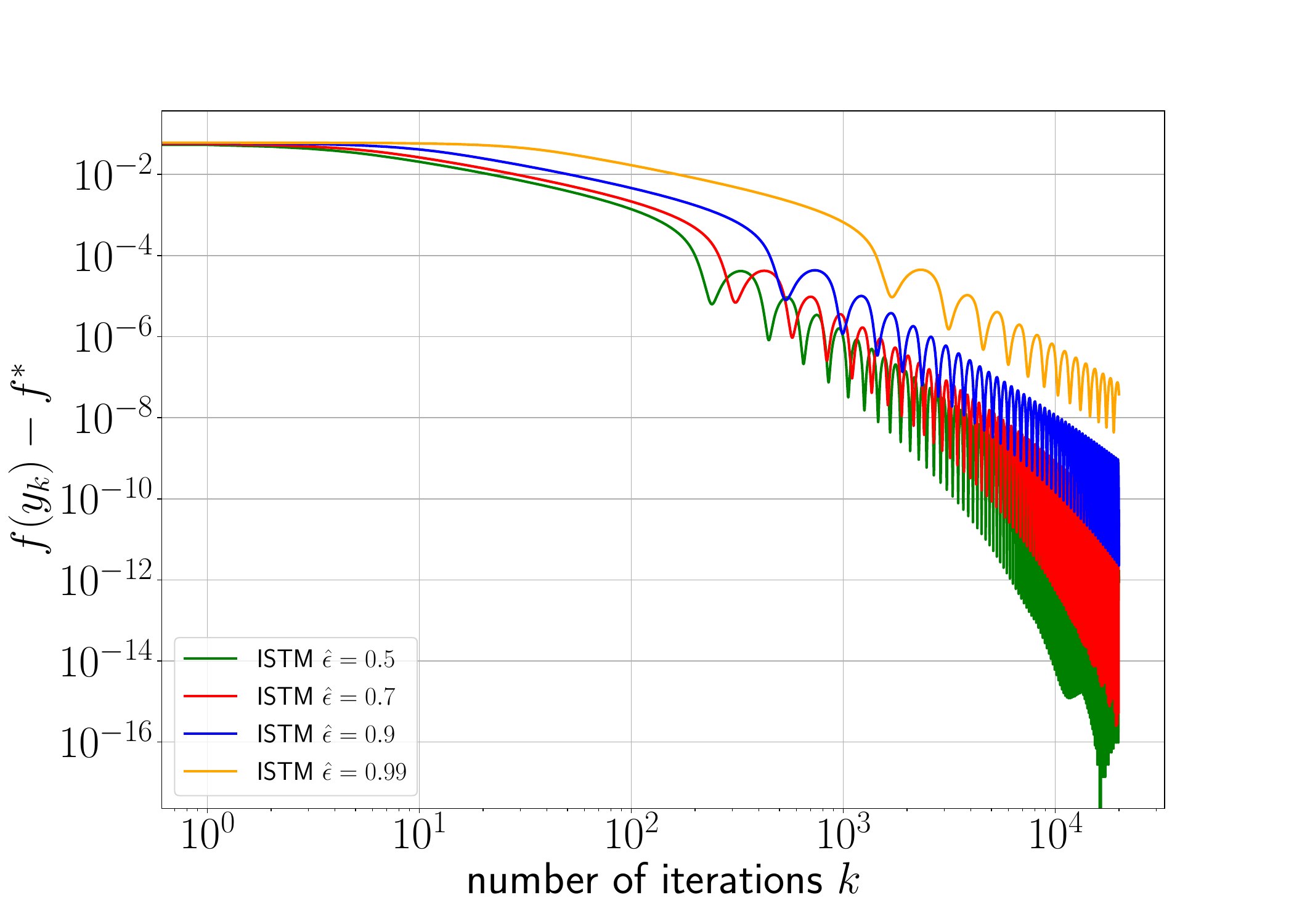} }
{\includegraphics[width=5.5cm]{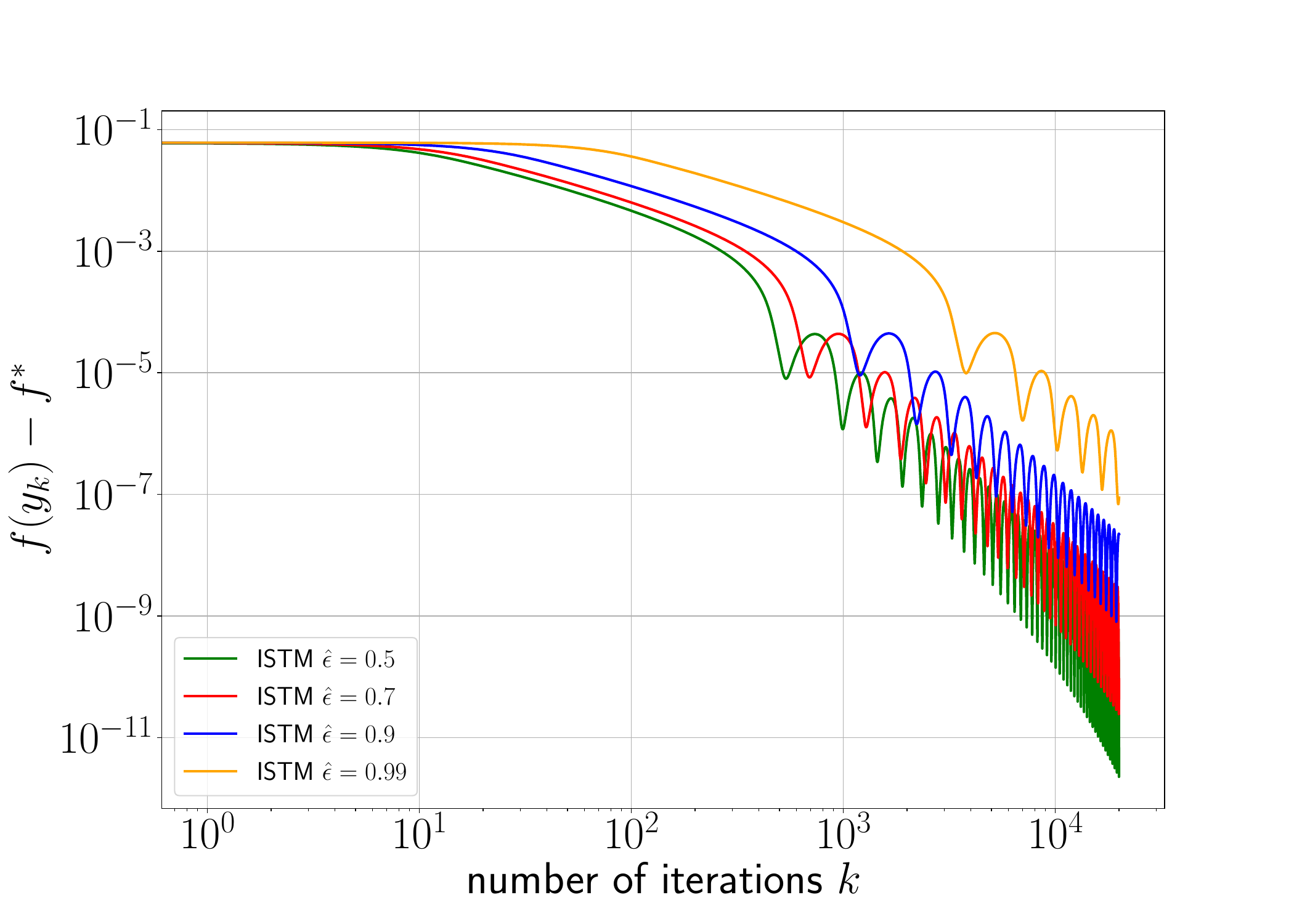}}
{\includegraphics[width=5.5cm]{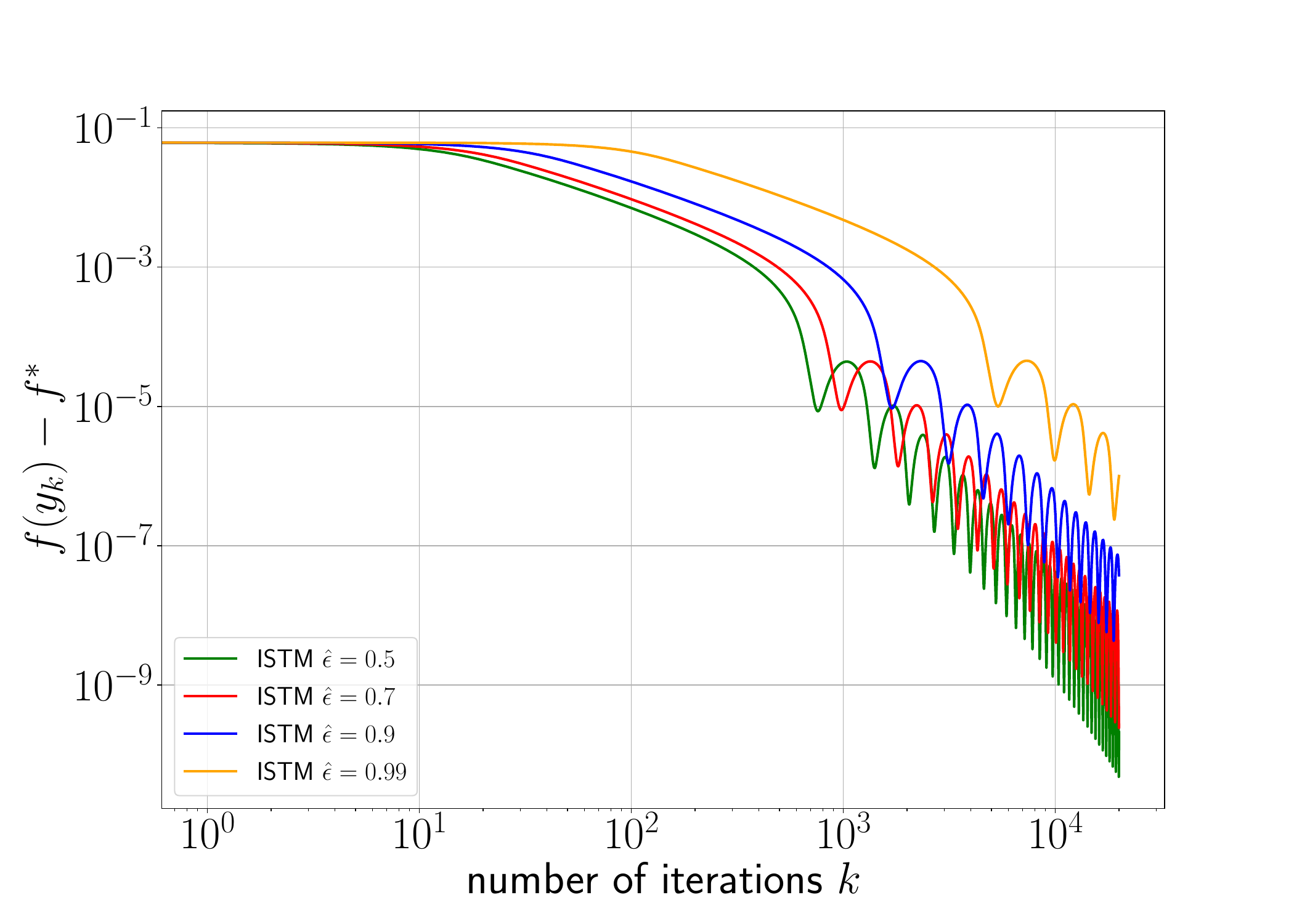} }
{\includegraphics[width=5.5cm]{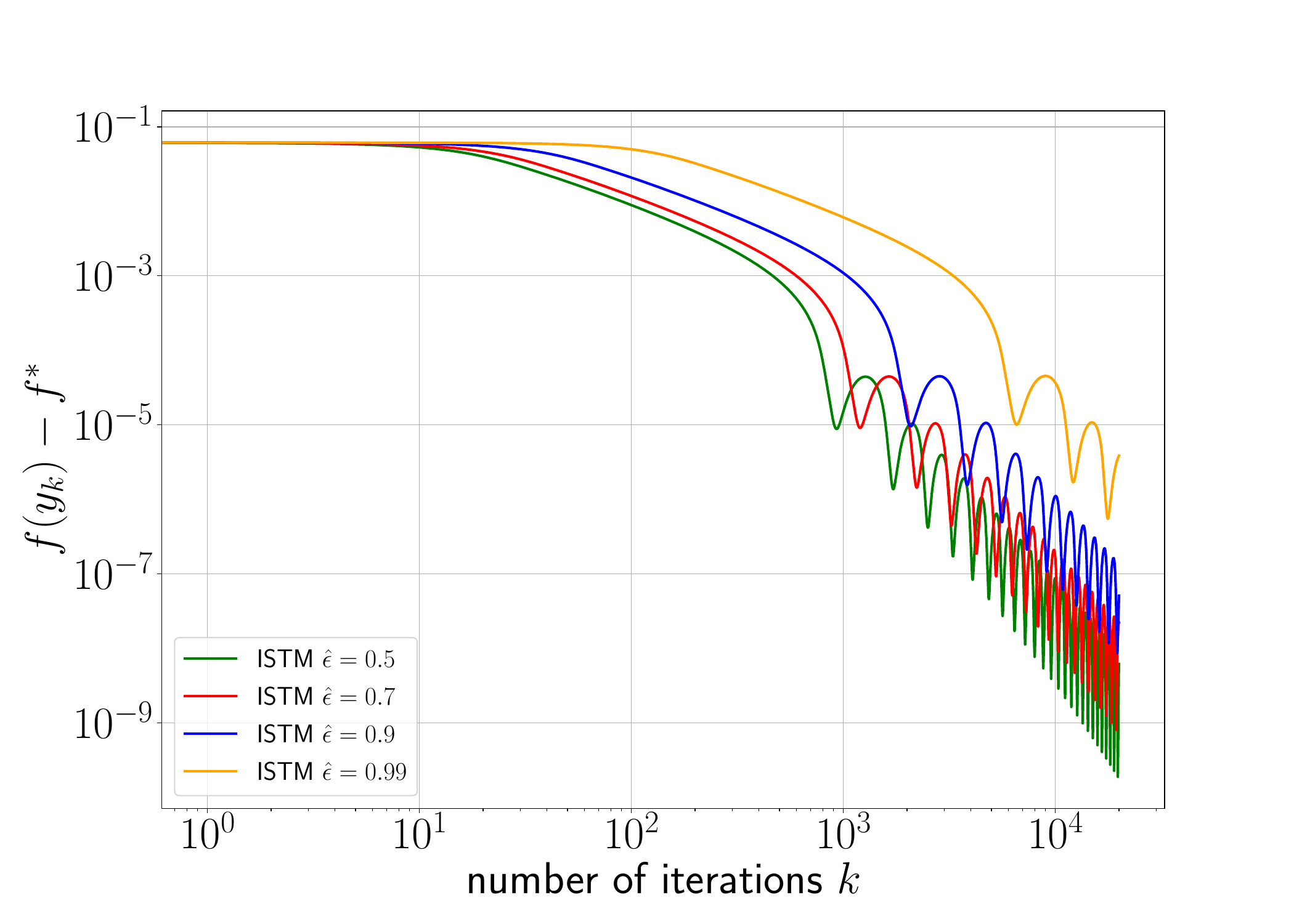} }
\caption{The results of \texttt{ISTM}
for  for  the objective function \eqref{ex_smooth}.  $a= 2$ (up-left), $a = 10$ (up-right), $a = 20$ (down-left), $a = 30$ (down-right). 
}
\label{figs1_ISTM_a_2_10_20_30}%
\end{figure}
From Fig. \ref{figs1_ISTM_a_2_10_20_30}, we can see that \texttt{ISTM}  will work better when we decrease the value of the parameter $a$. We also note that \texttt{ISTM} always converges for any value of $\hat{\varepsilon} \in [0,1]$. This fact is not satisfied for other algorithms for the class of problems under consideration with relative noise in the gradient. See \cite{vasin2023accelerated}, where it was proposed an algorithm that does not converge for  $\hat{\varepsilon} \geq 0.712$.

For Algorithms \ref{adaptive_alg} and \ref{adaptive_alg_L_p}, the results are represented in Fig. \ref{res_adaptive2_error_09_099} and Fig. \ref{res_adaptive2_adptive_estimate}. These results demonstrate the difference $f(y^k) - f(x^*)$ at each iteration and theoretical estimate \eqref{estimate2_adptive_alg}. From these figures, we also see that these algorithms always converge for any value of $\hat{\varepsilon} \in [0,1]$. Moreover, from Fig. \ref{res_adaptive2_error_09_099}, in the right, we can see the efficiency of the proposed Algorithm \ref{adaptive_alg_L_p} (\texttt{AIM} with variable $p$). 

\begin{figure}[htp]
\centering
{\includegraphics[width=5.5cm]{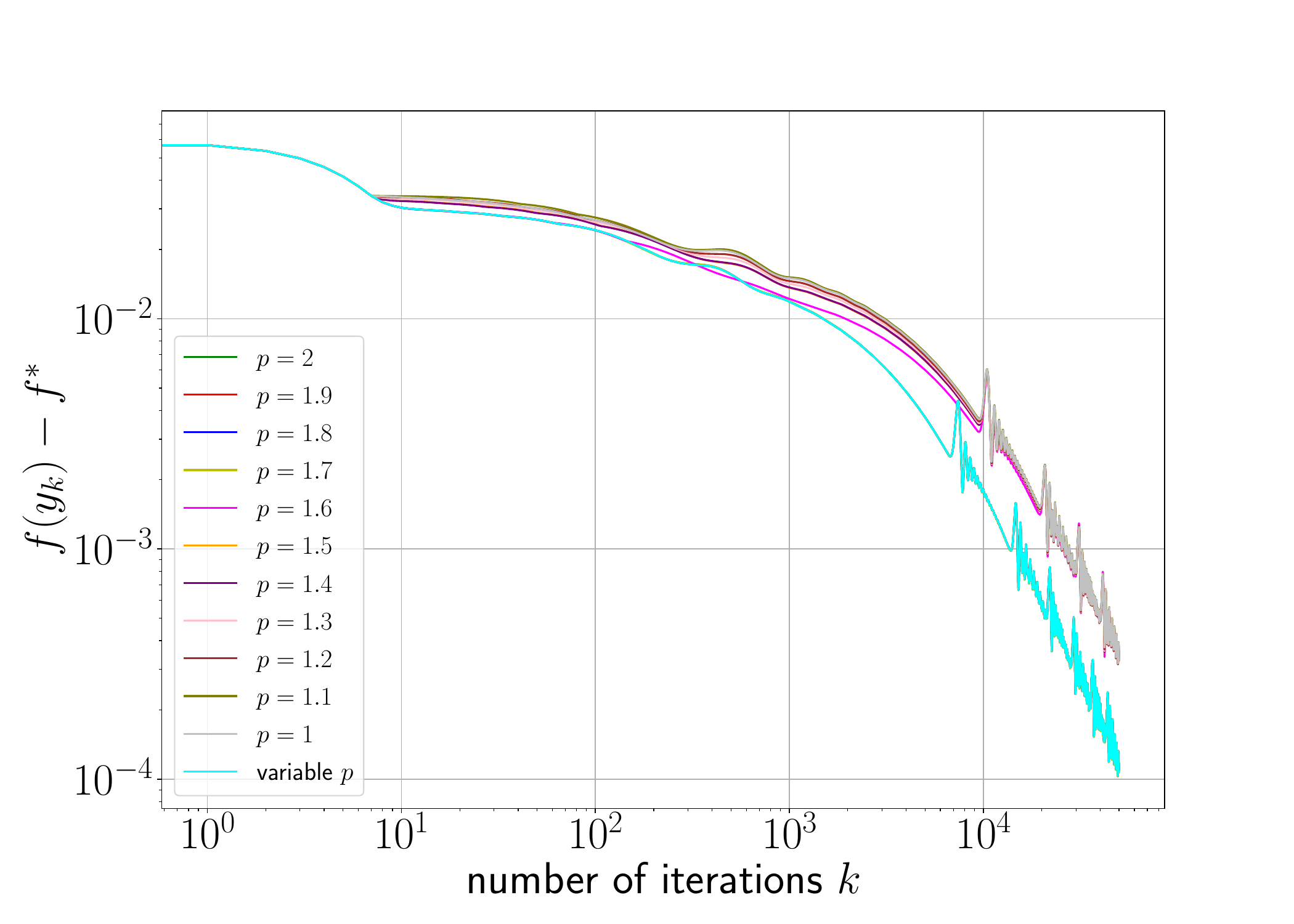} }
{\includegraphics[width=5.5cm]{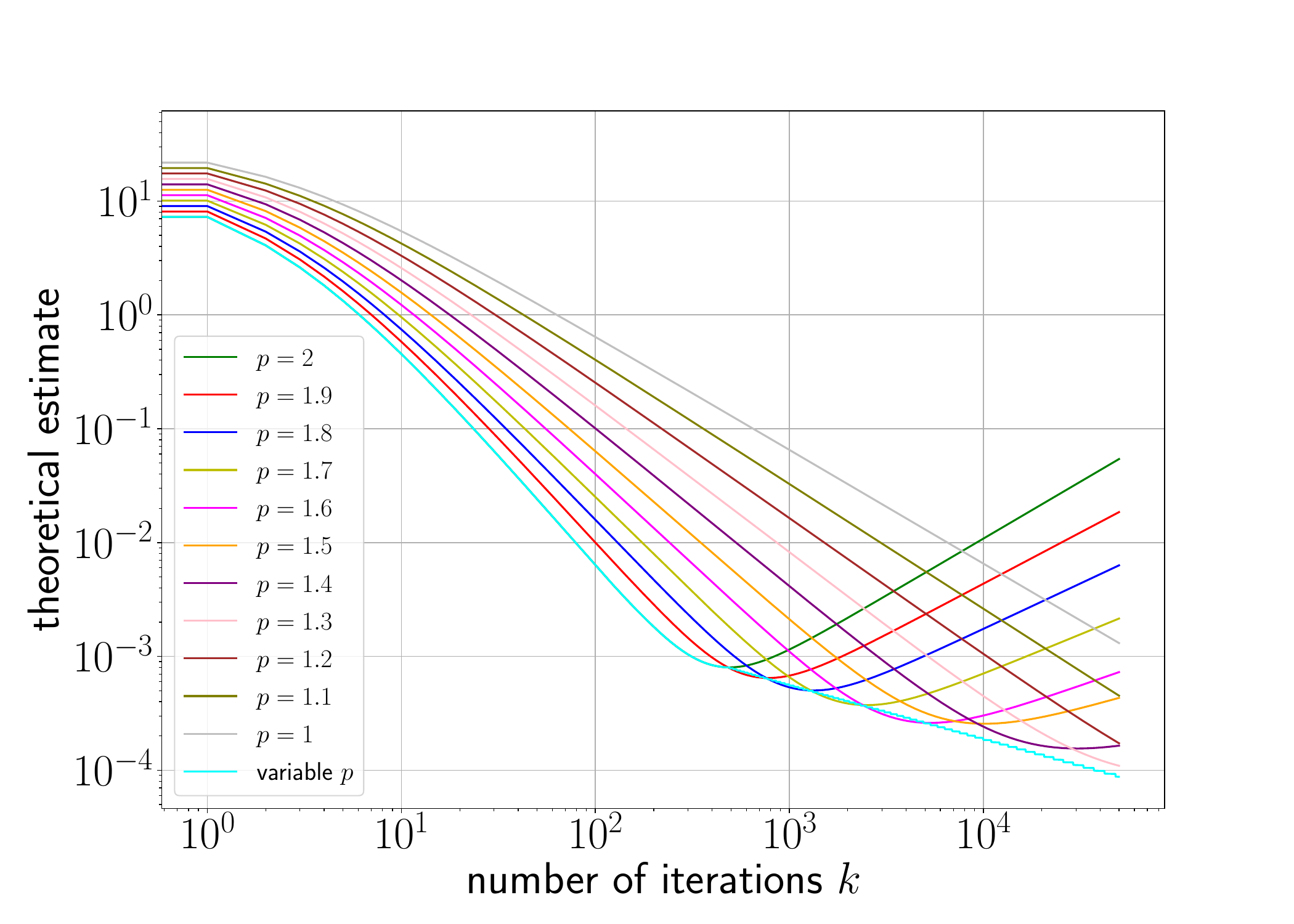} }
{\includegraphics[width=5.5cm]{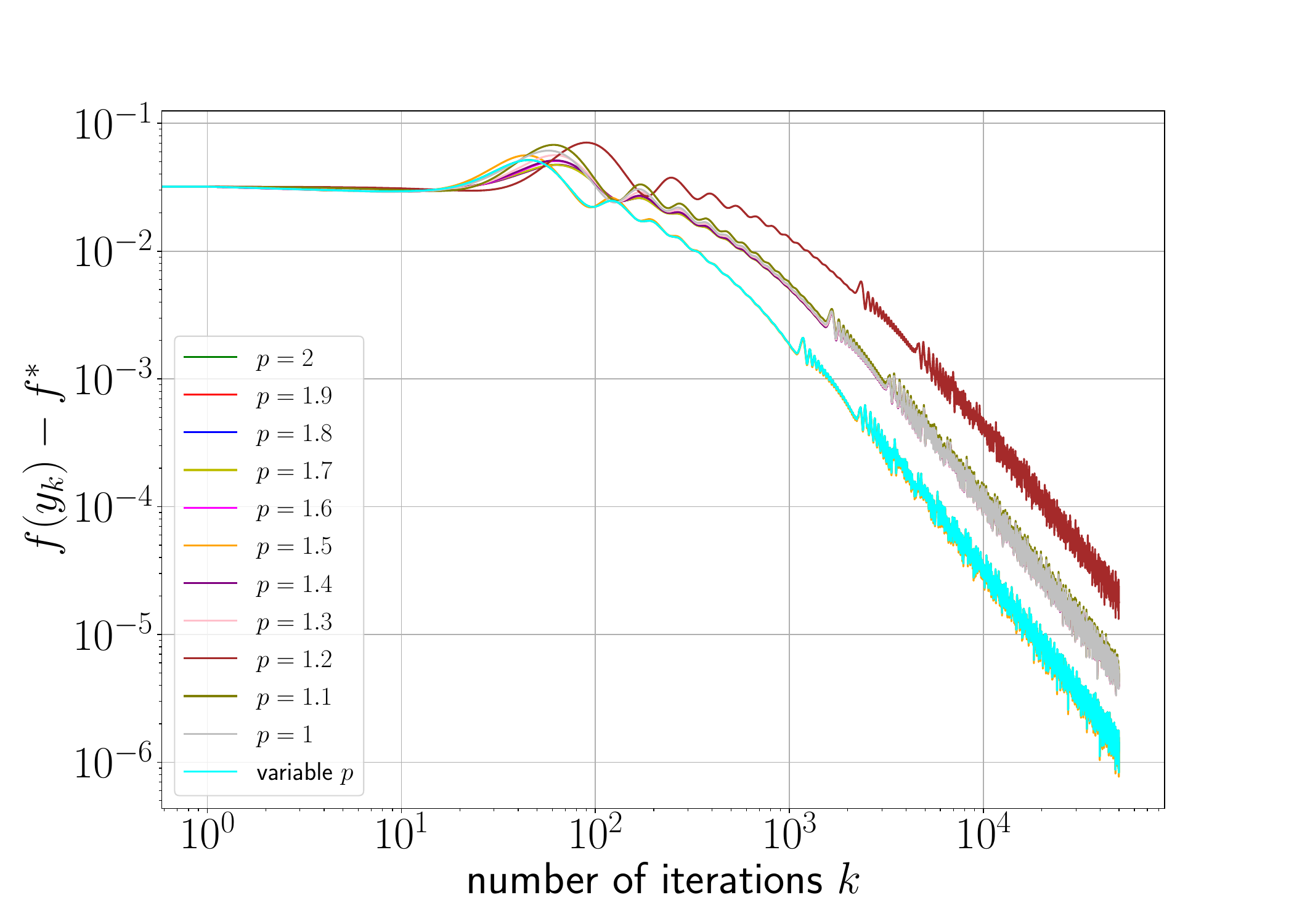} }
{\includegraphics[width=5.5cm]{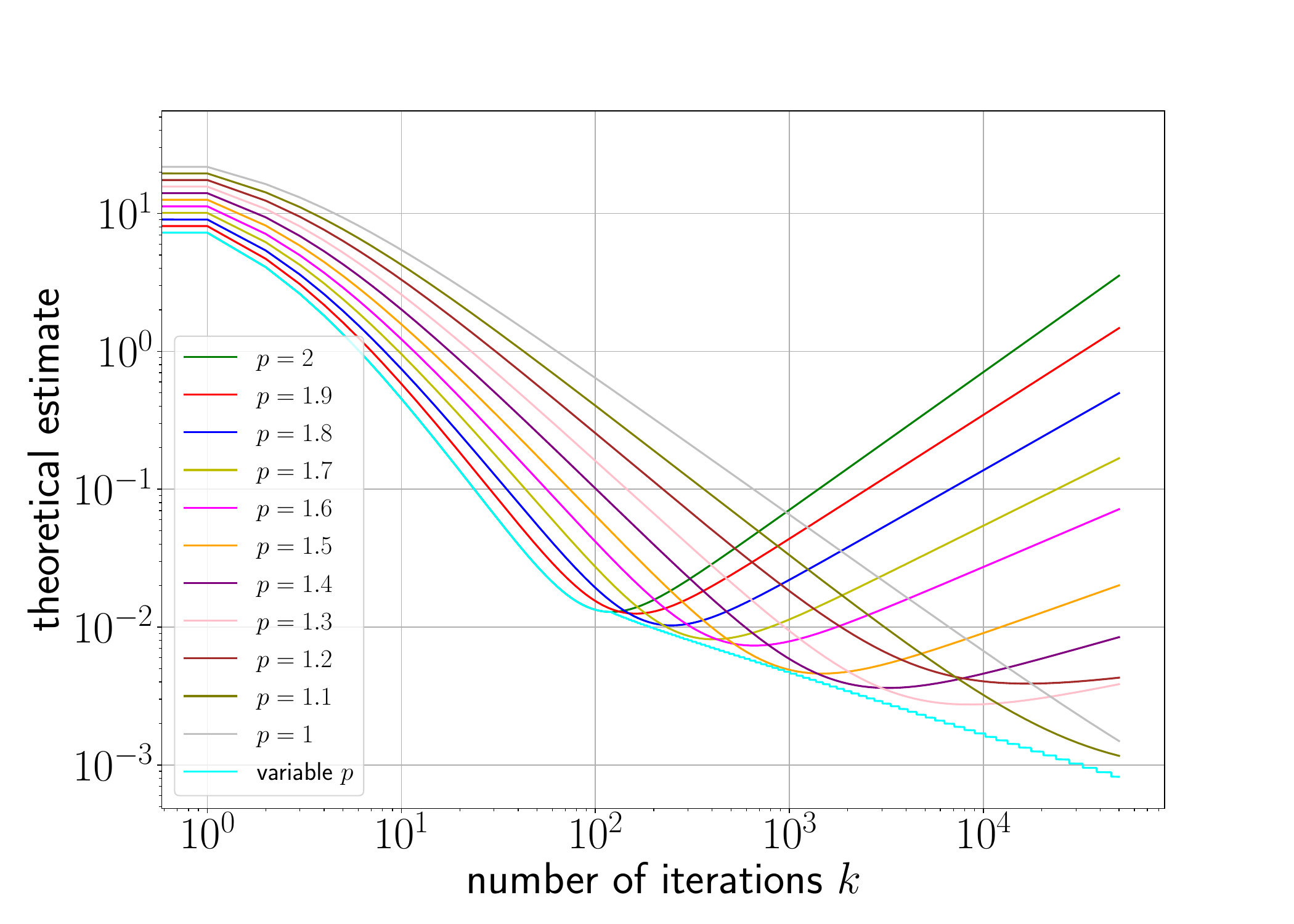} }
\caption{The results of Algorithms \ref{adaptive_alg}  and \ref{adaptive_alg_L_p}, for  the objective function \eqref{ex_smooth}, with $\hat{\varepsilon} = 0.99$ (up), and $\hat{\varepsilon} = 0.9$ (down). Here the theoretical estimate is \eqref{estimate2_adptive_alg}.}
\label{res_adaptive2_error_09_099}
\end{figure}

\begin{figure}[htp]
\centering
{\includegraphics[width=5.5cm]{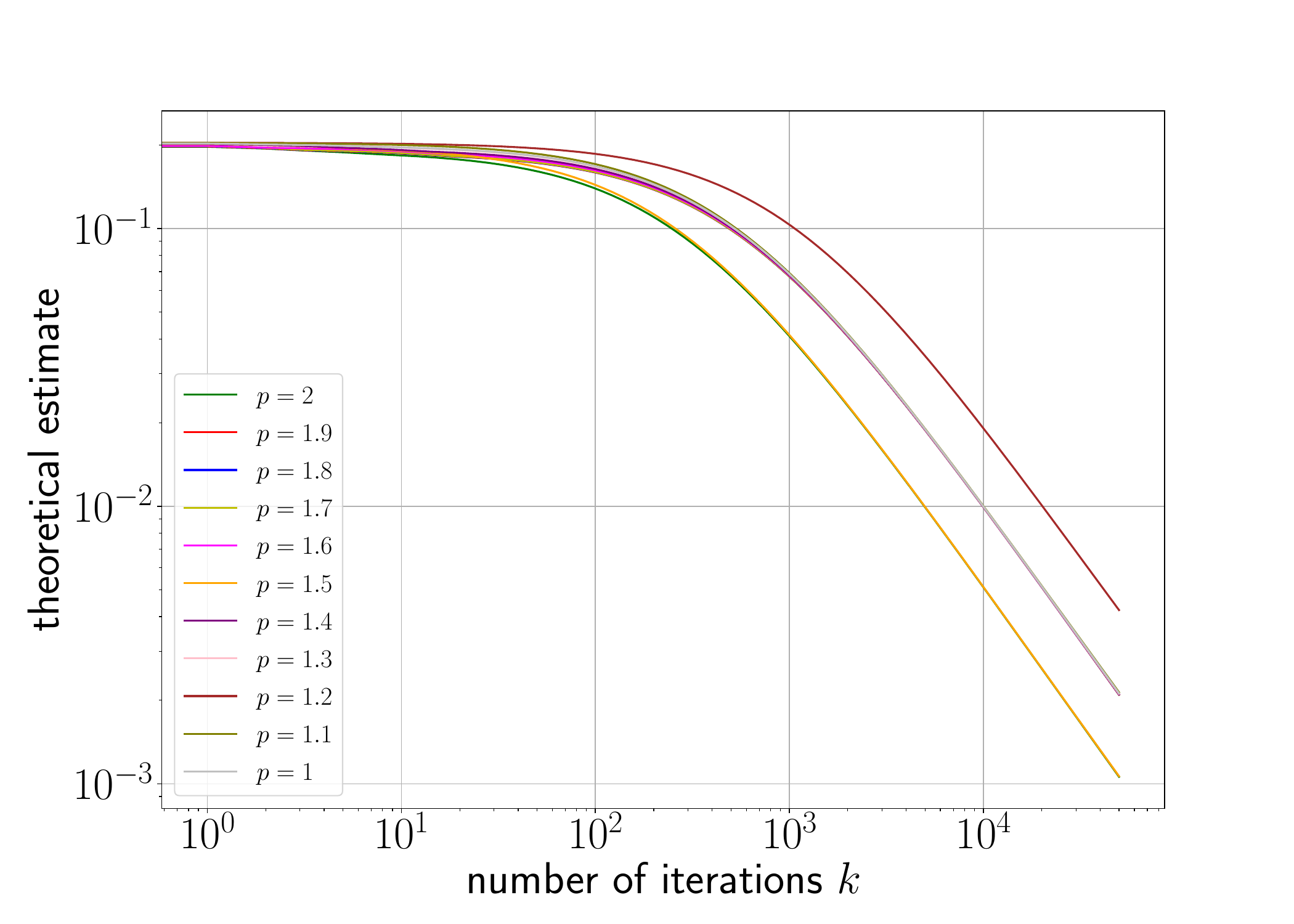} }
{\includegraphics[width=5.5cm]{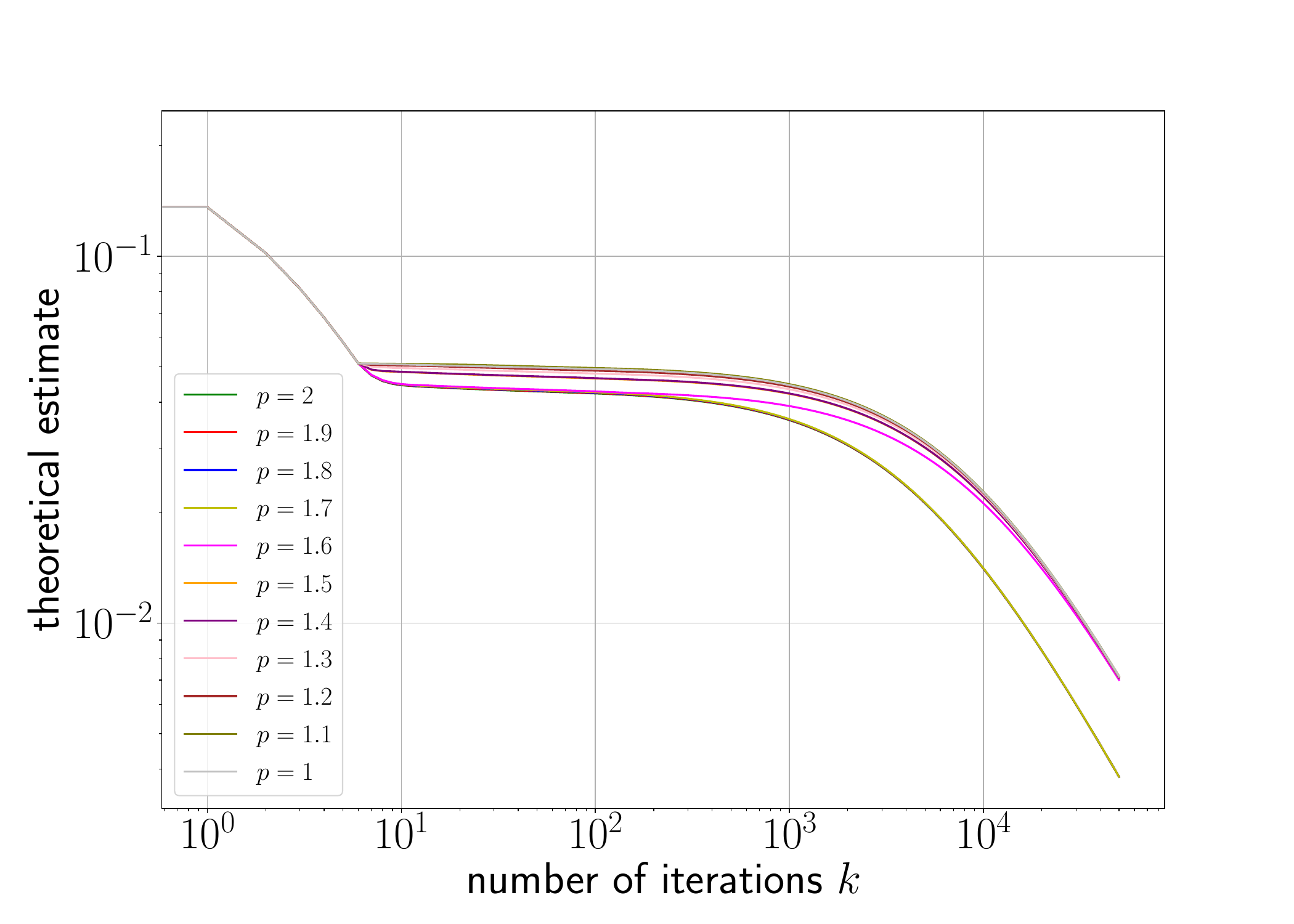} }
\caption{The results of \texttt{AIM} for  the objective function \eqref{ex_smooth}, with $\hat{\varepsilon} = 0.9 $  (left) and $\hat{\varepsilon} = 0.99$ (right). Here the theoretical estimate is \eqref{adaptive_estimate}. }
\label{res_adaptive2_adptive_estimate}
\end{figure}

In the last figure (Fig. \ref{res_comparision_alg1_3}), we see the results of the comparison between Algorithms \ref{alg_STM_relative}, \ref{adaptive_alg} and \ref{adaptive_alg_L_p}, for different values of $a$. From this figure, we can see that Algorithm \texttt{AIM},  will, at the first iterations,  work better as the value of $\hat{\varepsilon}$ decreases and the value of $a$ increases. Moreover,  the Algorithm \texttt{AIM} gives at the first iterations, a better estimate of a solution to the problem under consideration.  
 
\begin{figure}[htp]
\centering
{\includegraphics[width=5.5cm]{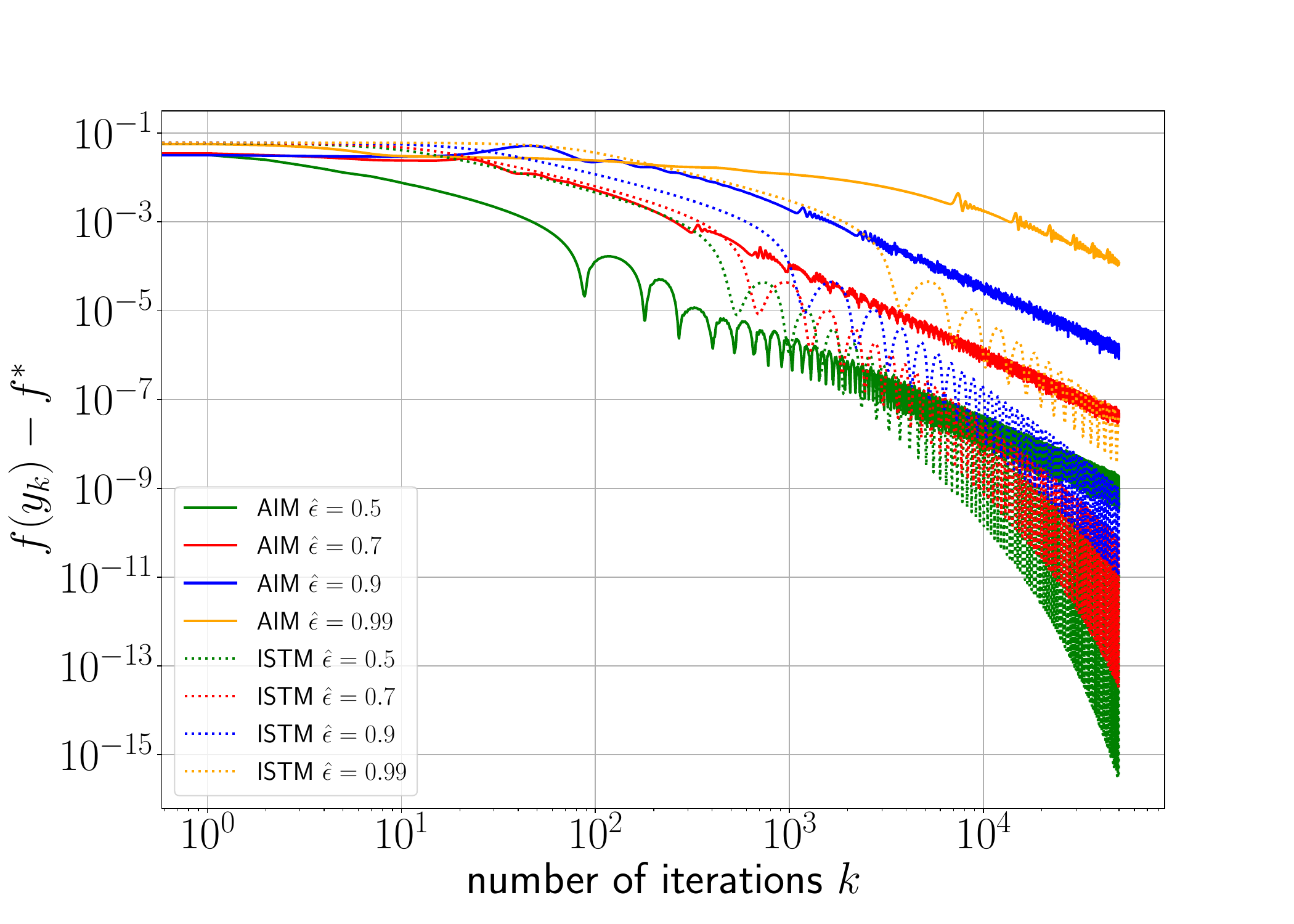} }
{\includegraphics[width=5.5cm]{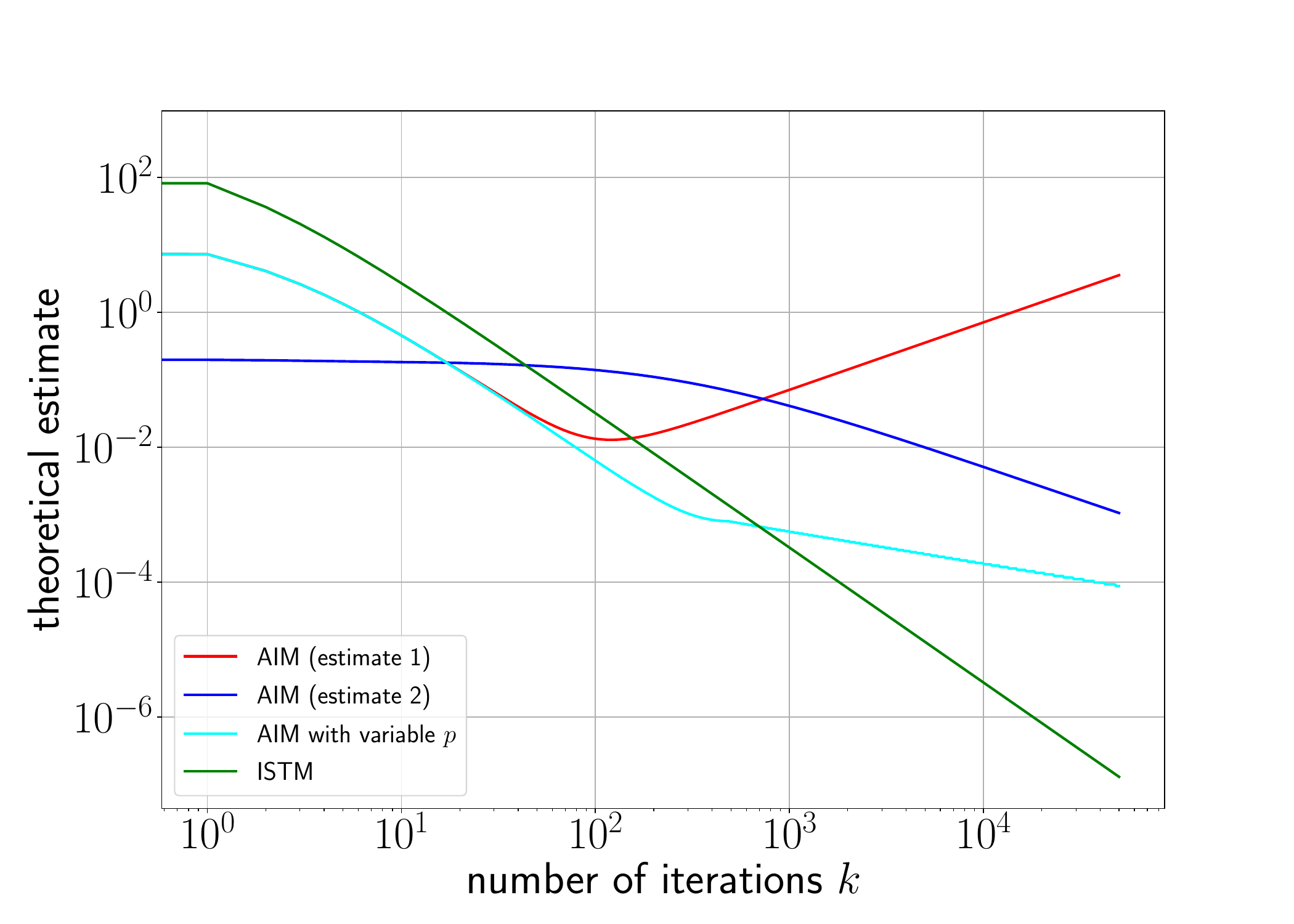} }
{\includegraphics[width=5.5cm]{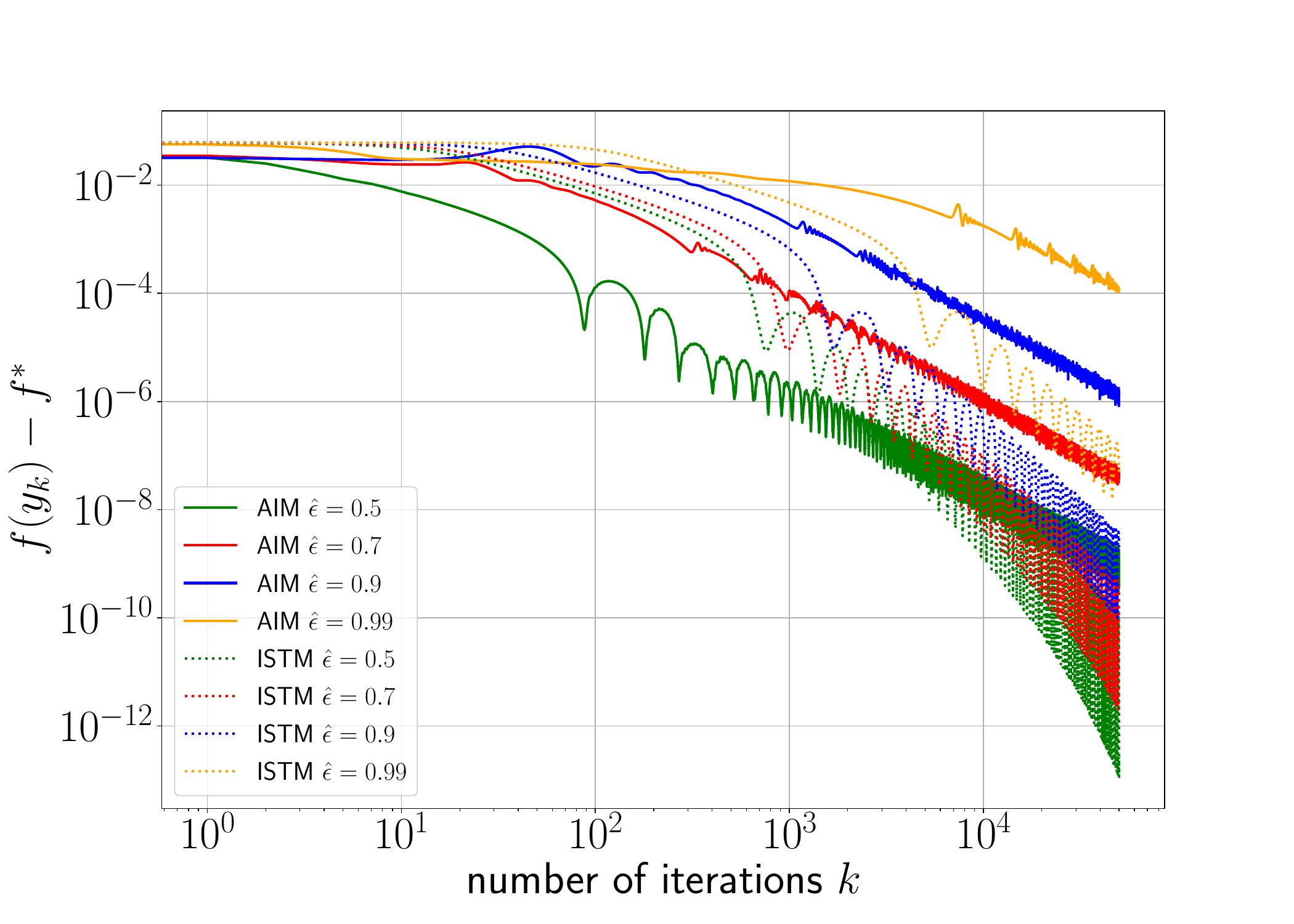} }
{\includegraphics[width=5.5cm]{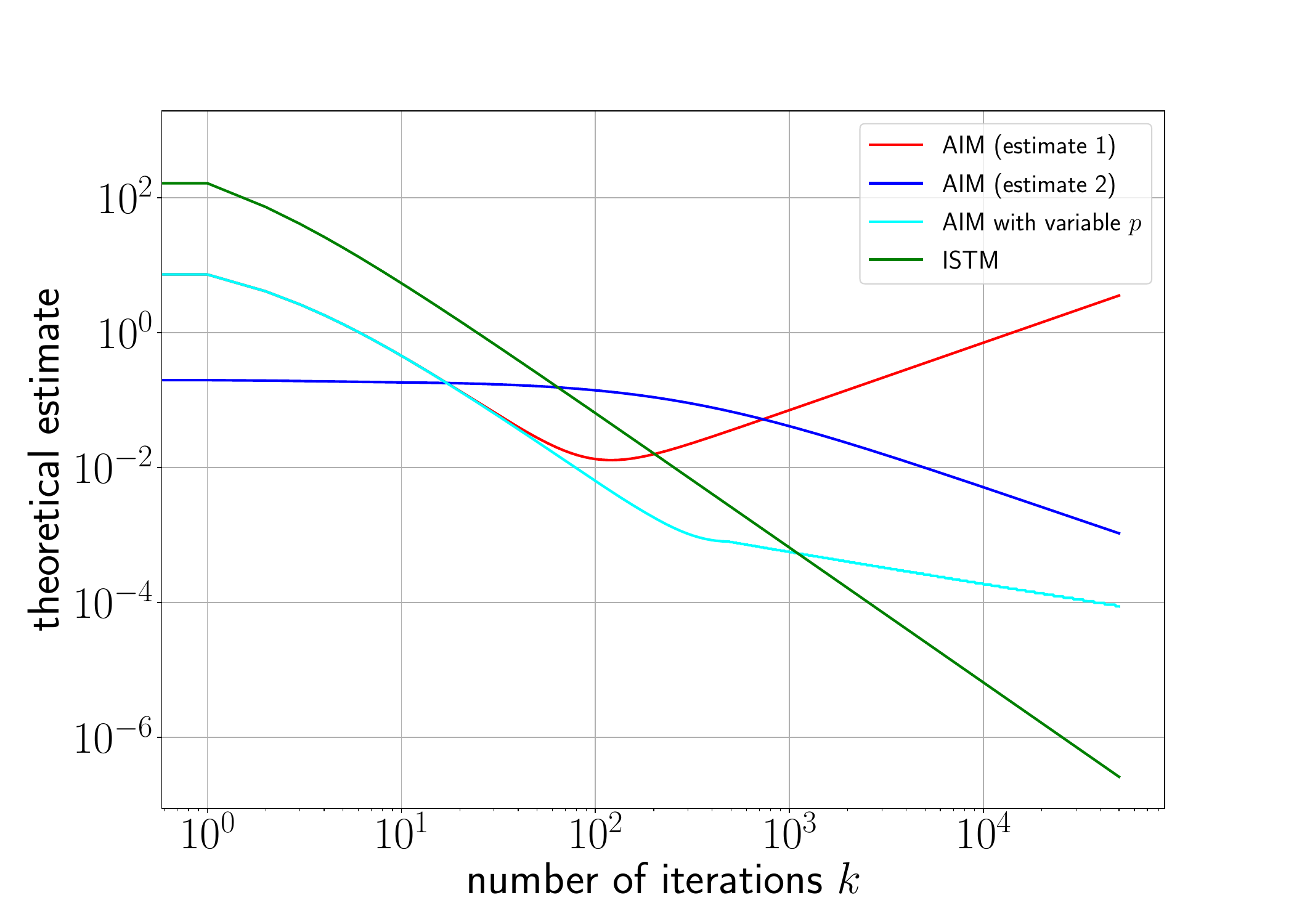} }
{\includegraphics[width=5.5cm]{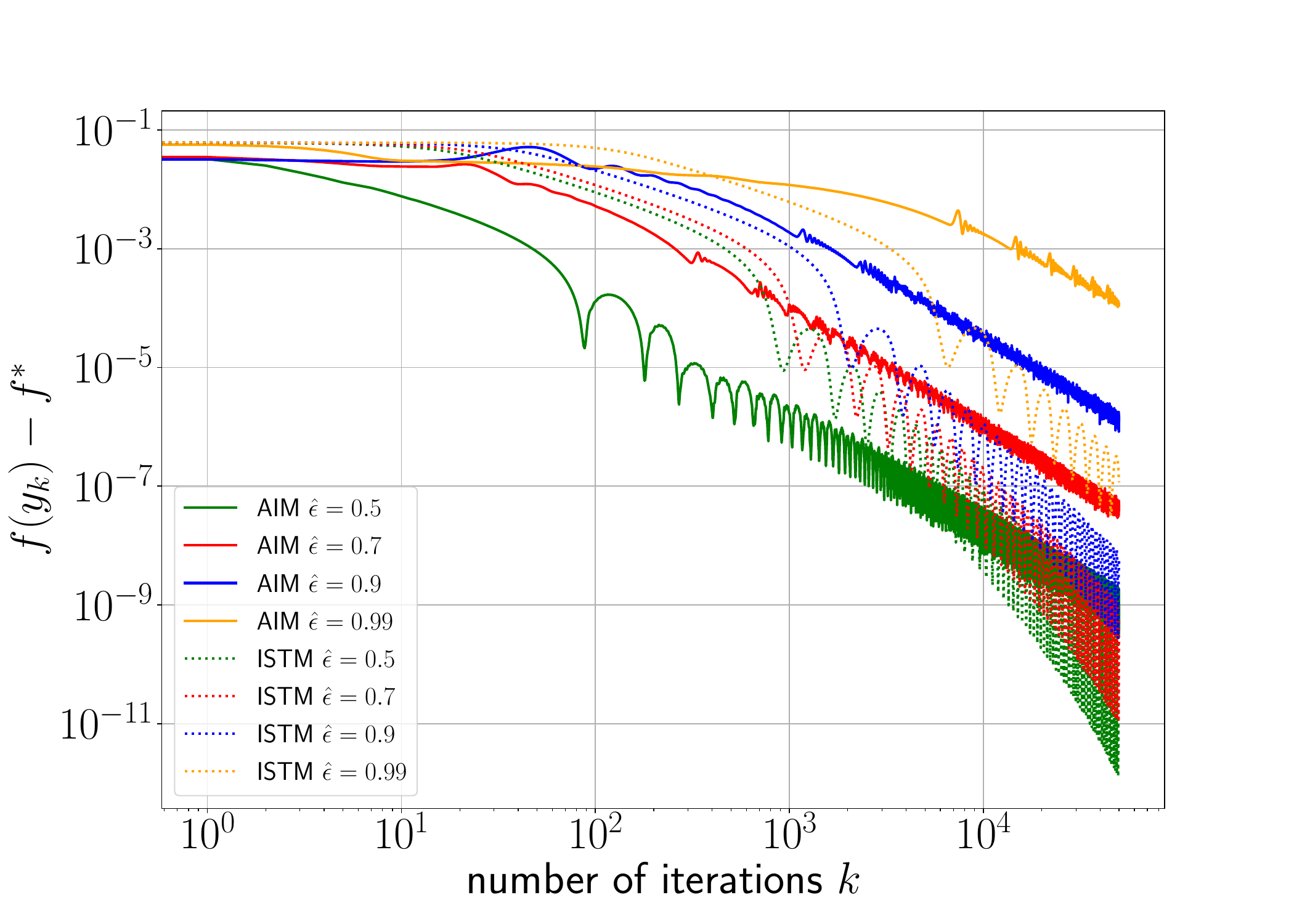} }
{\includegraphics[width=5.5cm]{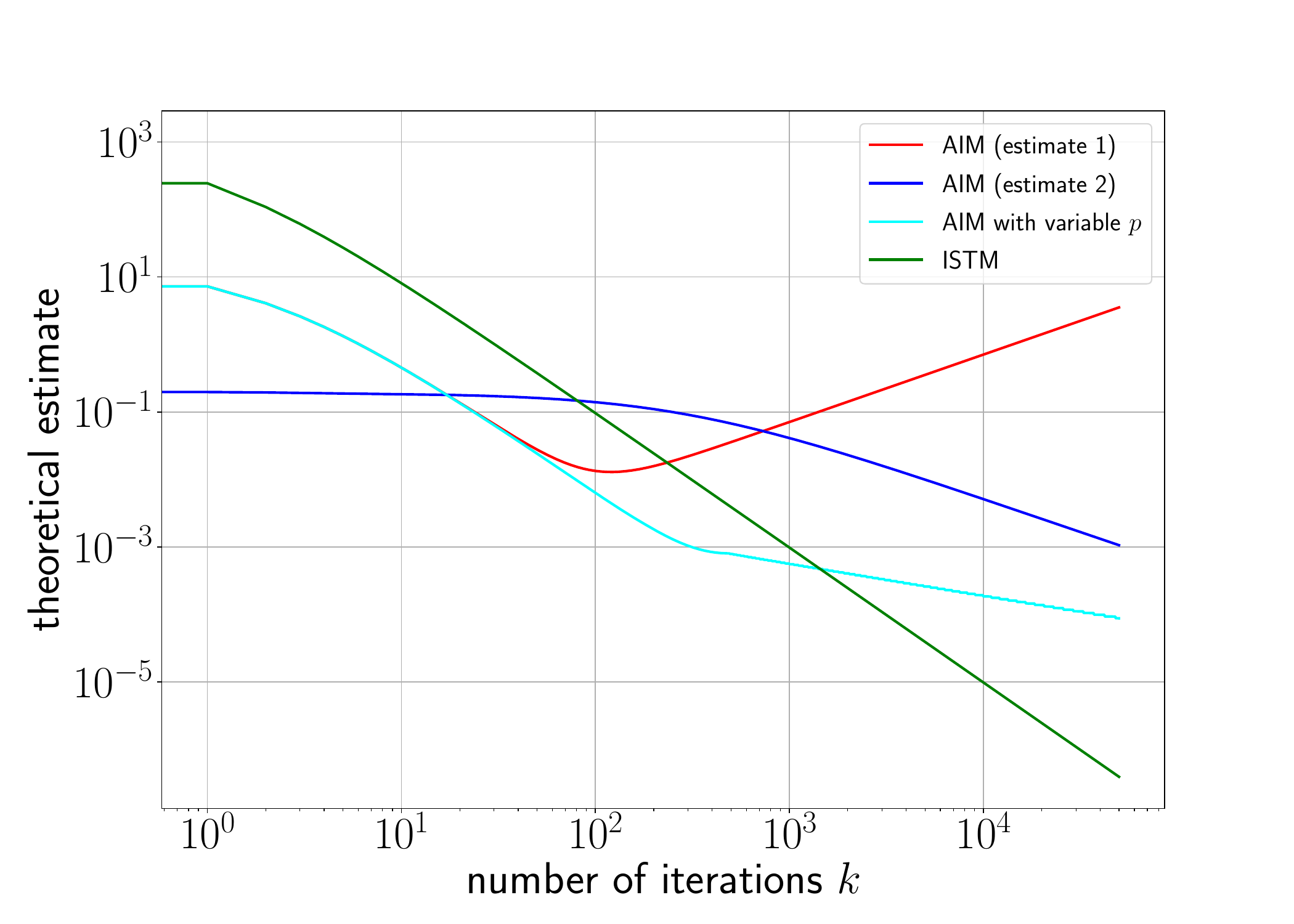} }
\caption{The results of \texttt{ISTM} (Algorithm \ref{alg_STM_relative}), \texttt{AIM} (Algorithm \ref{adaptive_alg}) and \texttt{AIM} with variable $p$ (Algorithm \ref{adaptive_alg_L_p}) for  the objective function \eqref{ex_smooth}, with $p=2$ and $a = 10, 20, 30$ from up to down. }
\label{res_comparision_alg1_3}
\end{figure}


\end{document}